\newtheorem{thm}{Theorem}[section]
\newtheorem{lemma}[thm]{Lemma}
\newtheorem{pro}[thm]{Proposition}
\newtheorem{cor}[thm]{Corollary}
\newtheorem{defn}[thm]{Definition}
\newtheorem{hyp}[thm]{Hypothesis}
\newtheorem{rmk1}[thm]{Remark}
\numberwithin{equation}{section}
\newenvironment{proof}{{\sc Proof.}}{\hfill {\sc qed}}
\newenvironment{rmk}{\begin{rmk1} \rm }{\end{rmk1}}
\renewcommand{\hat}[1]{\widehat{#1}}
\renewcommand{\tilde}[1]{\widetilde{#1}}
\newcommand{\N}{\mathbb{N}}
\newcommand{\R}{\mathbb{R}}
\newcommand{\E}{\mathscr{E}}
\newcommand{\elle}{\operatorname{L}}
\newcommand{\fcon}{\operatorname{\mathcal{F}C}}
\newcommand{\mres}{\mathbin{\vrule height 1.6ex depth 0pt width
0.13ex\vrule height 0.13ex depth 0pt width 1.3ex}}
\title{Characterization of BV functions on open domains: the Gaussian case and the general case}
\author{Davide Addona\thanks{email: davide.addona@unimib.it}}
\affil{Department of Mathematics and applications\\
University of Milano Bicocca\\
via Cozzi 55, 20125 Milano, Italy}
\author{Giorgio Menegatti\thanks{email:
giorgio.menegatti@unife.it} }  
\author{Michele Miranda jr.\thanks{email: michele.miranda@unife.it}}
\affil{Department of Mathematics and Computer Science\\
University of Ferrara\\
via Machiavelli 30, I-44121 Ferrara, Italy}
\providecommand{\keywords}[1]{{\textit{Keywords}:} #1}
\begin{document}

\maketitle
\abstract{
We provide three different characterizations of the space 
$BV(O,\gamma)$ of the functions of bounded variation with 
respect to a centred non-degenerate Gaussian measure $
\gamma$ on open domains $O$ in Wiener spaces. Throughout 
these different characterizations we deduce a sufficient 
condition for belonging to $BV(O,\gamma)$ by means of the 
Ornstein-Uhlenbeck semigroup and we provide an explicit 
formula for one-dimensional sections of functions of 
bounded variation. Finally, we apply our technique to 
Fomin differentiable probability measures $\nu$ on a 
Hilbert space $X$, inferring a characterization of the 
space $BV(O,\nu)$ of the functions of bounded variation 
with respect to $\nu$ on open domains $O\subseteq X$.

\vspace{0.5cm}
\keywords{
Infinite dimensional analysis; functions of bounded 
variation; open domains in Wiener spaces; geometric 
measure theory}

{\it{Mathematics Subject Classification}} [2010]  
{Primary: 58E, 28C20; Secondary: 26B30, 60H07}

\section{Introduction}
Functions of bounded variation ($BV$ functions in the 
sequel) have had an important role in several classical 
problems of the Calculus of Variations (see 
\cite{AmbFusPal00} for a complete and an in-depth 
dissertation). In one dimension they have been introduced 
in $1881$ in \cite{J81} by Jordan who also pointed out 
the canonical decomposition of $BV$ functions as the 
difference of two increasing functions.

A correct generalization to higher dimensions required 
over than $50$ years and it is due to Fichera and De 
Giorgi, who related $BV$ functions to distributions. In 
\cite{F54} Fichera defined $BV$ functions as those 
functions whose partial derivatives, in the sense of 
distributions, are measures with finite total variation, 
i.e., given a continuous function $u$ and open set 
$\Omega\in \R^n$, $u$ is a function of bounded variation 
if the values
\begin{align*}
T_iu(Q):=\int_{\partial Q}u\nu_i d\mathcal L^{n-1},
\end{align*}
are finite, where $Q\subseteq \Omega$ is a cube with 
sides parallel to the coordinate axes, $i\in \{1,\cdots,n
\}$ and $\nu_i$ is the $i$-th component of the outward 
pointing unit normal to $\partial Q$.

In \cite{D54} De Giorgi showed that functions whose 
distributional derivatives are measures with finite total 
variation can be characterized by means of the behaviour 
near $0$ of the heat semigroup $T_t$. To be more precise, 
he proved that $u\in L^\infty(\R^n)$ is a $BV$ function 
if
\begin{align*}
I(u):=\lim_{t\downarrow0}\int_{\R^n}|\nabla T_tu|dx <+
\infty.
\end{align*}

Further, in \cite{M641,M642} Mario Miranda provided an 
alternative definition of $BV$ functions introducing the 
functional 
\begin{align*}
V(u,\Omega):=\sup\left\{\sum_{i=1}^m\int_\Omega u^i {\rm 
div} (v^i) dx:v\in [C_c(\Omega)]^{mN}, \ \|v\|_\infty\leq 
1\right\},
\end{align*}
for any $u=(u^1,\ldots,u^m)\in \left[L^1_{\rm loc}
(\Omega)\right]^n$ and any open set 
$\Omega\subseteq \R^N$, 
and showing that $u$ is a function of bounded variation 
if and only if $V(u,\Omega)<+\infty$.

The last important characterization of $BV$ functions is 
in terms of smooth functions. Indeed, a function $u\in 
[L^1(\Omega)]^m$ is a $BV$ function if and only if there 
exists a sequence $(u_n)\subseteq[ C^\infty(\Omega)]^m$ 
which converges to $u$ in $L^1$ and whose gradients are 
uniformly bounded in $L^1$.

In infinite dimension, $BV$ functions have been 
introduced by Fukushima and Hino in \cite{Fu00,FuHi01}. 
The first problem which arises in infinite dimension is 
that there does not exist an analogous of the Lebesgue 
measure. Therefore, in \cite{Fu00, FuHi01} the authors 
deal with a Wiener space, i.e., a Banach space endowed 
with a Gaussian measure $\gamma$ and a related 
differential structure characterized by the 
Cameron--Martin space $H$, and they define the space of $
BV$ functions with respect to $\gamma$ relying upon the 
theory of Dirichlet forms.

The first attempt to study $BV$ functions in Wiener 
spaces with tools which are closer to those of Geometric 
Measure Theory in Euclidean setting is 
\cite{AmbMirManPal10}. Here, the authors consider the 
Wiener space $(X,\gamma,H)$ and analyse the connection 
among the distributional notion of vector-valued 
measures, approximations by means of smooth functions and 
the properties of the Ornstein--Uhlenbeck semigroup, 
which in Wiener spaces plays the role of the heat 
semigroup.
One of the main problem in infinite dimension is the loss 
of local compactness of $X$ 
which does not allow to apply the Riesz Theorem on the 
dual of $C_c(X)$ functions.
Further, it is known that the dual space of $C_b(X)$ is 
strictly larger to the space of signed measures on $X$.

The main result of \cite{AmbMirManPal10} is a 
characterization of $BV$ functions in Wiener spaces which 
is completely analogous to the finite dimensional 
situation. 
Namely, \cite[Theorem 4.1]{AmbMirManPal10} states that  a 
function $u\in L({\rm Log}L)^{1/2}(X,\gamma)$ is a 
function of bounded variation, i.e., 
its distributional derivative along the directions of $H$ 
are finite measures if and only if one of the following 
conditions holds true:
\begin{itemize}
\item[(i)] the functional
\begin{align*}
V(u):=\sup\left\{\int_X u\ \!{\rm div}_\gamma Gd\gamma :G
\in \mathcal FC_c^1(X,H):|G(x)|_H\leq 1 \ \forall x\in X
\right\},
\end{align*}
is finite, where $\mathcal FC_c^1(X,H)$ is the space of 
$H$-valued cylindrical functions with ``compact support'' 
(in the sense that its support is an infinite cylinder 
with compact basis).
\item[(ii)] The functional
\begin{align*}
L(u):=\inf\left\{\liminf_{n\rightarrow+\infty}\|
\nabla_Hu_n\|_{L^1(X,\gamma)}:u_n\in \mathbb D^{1,1}(X,
\gamma), \ u_n\stackrel{L^1}\longrightarrow u\right\},
\end{align*}
is finite, where $ \mathbb D^{1,1}(X,\gamma)$ is the 
Sobolev space defined in \cite{Fu00}.
\item[(ii)] The limit
\begin{align*}
\mathcal I(u):=\lim_{t\downarrow 0}
\int_X|\nabla_HT_t u|_Hd\gamma,
\end{align*}
is finite, where $T_t$ is the Ornstein-Uhlenbeck on
the whole space $X$.
\end{itemize}

The aim of this paper is trying to generalize as more as 
possible the results of \cite{AmbMirManPal10} when an 
arbitrary open domain $O\subseteq X$ is considered. 
We say that $f\in L(\log L)^{1/2}(O,\gamma)$ belongs to 
$BV(O,\gamma)$ if there exists a vector measure $\mu\in 
\mathscr M(O,H)$ such that
\begin{align*}
\int_O f{\partial_h^*}\varphi d\gamma=
-\int_O \varphi d\mu_h,
\end{align*}
for any $h$ in the Cameron--Martin space
Here, $\partial_h^*$ denotes, up to the sign, the adjoint 
in $L^2$ of the
partial derivative along $h\in H$, $\mu_h=[\mu,h]_H$ and 
$\varphi\in {\rm Lip}_c(O,H)$ (set of bounded Lipschitz 
functions with bounded support with positive distance 
from $\partial O$). This definition coincides with
\cite[Definition 3.1]{AmbMirManPal10} when $O=X$, and, in 
Theorem~\ref{lauree} we show that 
$f\in L({\rm Log}L)^{1/2}(O,\gamma)$ is a function of 
bounded variation if and only if one of the following 
conditions is satisfied:
\begin{itemize}
\item[(a)] $V_\gamma(f,O)<+\infty$, where
\begin{align*}
V_\gamma(f,O):= 
& \sup \bigg\{\int_Of {\rm div}_\gamma^FGd\gamma: 
F\subseteq QX^* \mbox{ fin. dim.}, \ G
\in{\rm Lip}_c(O,F) ,  
\ |G(x)|_F\leq 1 \ \forall x\in O\bigg\}.
\end{align*}
\item[(b)] $L_\gamma(u,O)<+\infty$, where
\begin{align*}
L_\gamma(f,O):=\inf\left\{\liminf_{n\rightarrow+\infty}\|
\nabla_Hf_n\|_{L^1(O,\gamma)}:\ f_n\in \mathbb D^{1,1}(X,
\gamma), 
\ f_n \stackrel{L^1(O,\gamma)}\longrightarrow f \right\}.
\end{align*}
\end{itemize}
We point out that the equivalence of possible definitions
of $BV$ functions has been considered in the literature
also in more general metric spaces, see for instance
\cite{AmbDiM14Equ}. This characterization
can also be considered in metric measure spaces
with the construction given in \cite{Gig18Non}.
Nevertheless, in such setting when one wants to define
$BV$ functions using the functional $V_\gamma(f,O)$, one
usually requires the existence of a positive measure
that realizes the total variation. In our
setting we do not require a priori the existence
of such a measure, but we prove its existence
relying on the fact that in Wiener spaces
when $O=X$ this was proved in \cite{AmbMirManPal10}.

In finite dimension the definition of the space 
of $BV$ functions naturally extends to general open 
domains $\Omega \subseteq \R^n$; we refer for instance
to \cite{Bogachev2014} where the question
was addressed on the problem of the existence
of the extension operator.
On the other hand, in infinite dimension 
the situation is quite more complicated;
the first issue we deal with is that in Banach spaces the 
distance is, in 
general, not locally smooth. Moreover, compact sets in 
Banach spaces are not enough to approximate open sets 
(for instance, the closed unit ball is not compact in 
Hilbert spaces). Therefore, it is not obvious to find a 
good space of test functions. However, as often happens 
also in finite dimension, the space of Lipschitz 
functions with bounded support is a good compromise, 
since these functions are compatible with the 
differential structure of $X$ related to the Cameron-
Martin space $H$ and the distance is a Lipschitz 
function.
This choice, unfortunately, makes useless the finite 
dimensional approximations by means of conditional 
expectations (see \cite[Corollary 3.5.2]{Bog98}), which 
also in \cite{AmbMirManPal10} are crucial to get the main 
results.

To show that $V_\gamma(f,O)<+\infty$ we take advantage of 
\cite[Theorem 4.1]{AmbMirManPal10}. Indeed, we prove that 
if $V_\gamma(f,O)<+\infty$, then $V(\overline f)<+\infty
$, where $\overline f$ is the null extension of $f$ on 
$O^c$. This means that $\overline f\in BV(X,\gamma)$ and 
that its distributional derivatives along the directions 
of $H$ are finite measure. We conclude by proving that 
these distributional derivatives satisfy the integration 
by parts formula states above.
The fact that $L_\gamma(f,O)<+\infty$ implies that $f\in 
BV(O,\gamma)$ follows from an argument inspired by 
\cite{MS64}.

Finally, we prove that our techniques can also be applied 
in more general situations. We consider the results in 
\cite{DaPLun18}, where $X$ is a Hilbert space, 
$R\in \mathcal L(X)$ and $BV$ functions on $X$ with 
respect to a Fomin differentiable measure $\nu$ along the 
directions $R^*(X)$ are considered. 
The authors prove that a function $u\in L^1(X,\nu)$ 
belongs to $BV(X,\nu)$ if for any $z\in X$ it holds that
\begin{align*}
V_z(u):=\sup\left\{\int_X u(\langle R\nabla \varphi,z
\rangle-v_z\varphi)d\nu:\ \varphi\in C^1_b(X), \ \|
\varphi\|_\infty\leq 1\right\}<+\infty.
\end{align*}
Our arguments can be adapted to this setting and we 
obtain the characterization of $BV$ functions on $O
\subseteq X$ with respect to $\nu$ by means of the 
variation
\begin{align*}
V_z(f,O):=\sup\left\{\int_Xf(\langle RD\varphi,z\rangle-
\varphi{\it v}_z)d\nu:\varphi\in {\rm Lip}_c(O), \ \|
\varphi\|_\infty\leq 1\right\}.
\end{align*}
The unique additional hypothesis that we need is that 
Lipschitz functions are compatible with $\nu$, in a sense 
that we make explicit later.

The paper is organized as follows. In Section 
\ref{pasticcio} we give definitions and preliminary 
results which will be useful in the sequel of the paper. 
We begin with infinite dimensional measure theory, and we 
prove some properties of vector valued measures. Then, we 
introduce the Wiener space $(X,\gamma,H)$, where $X$ is a 
separable Banach space, $\gamma$ is a centred 
non--degenerate Gaussian measure and $H$ is the 
Cameron--Martin space associated to $\gamma$. Later, we 
present the standard construction of Sobolev spaces in 
Wiener setting and some features of the 
Ornstein--Uhlenbeck semigroup on $X$. We conclude by 
listing the main properties of the Orlicz space 
$L({\rm Log}L)^{1/2}(O,\gamma)$ and by defining the space 
of functions of bounded variation $BV(O,\gamma)$, the 
variation $V_\gamma(f,O)$ and $L_\gamma(f,O)$, where 
$O\subseteq X$ is an open domain.

Section \ref{main} is devoted to prove the equivalent 
definitions of $BV(O,\gamma)$, i.e., we show that a 
function $f\in L({\rm Log}L)^{1/2}(O,\gamma)$ belongs to 
$BV(O,\gamma)$ if and only if either $V_\gamma(f,O)<+
\infty $ or $L_\gamma(f,O)<+\infty$.

In Section \ref{further} we collect some important 
consequences of the results in Section \ref{main}. To be 
more precise, let $T_t$ denote the Ornstein--Uhlenbeck 
semigroup on the whole space. We prove that if for 
$f\in L({\rm Log}L)^{1/2}(O,\gamma)$ the quantity
\begin{align*}
\mathscr J(f,O):=\liminf_{t\downarrow0}\int_O| 
\nabla_HT_t(\bar f)|_Hd\gamma,
\end{align*}
where $\bar f$ is the null extension of $f$ outside
$O$, is finite, then $f\in BV(O,\gamma)$. We are not able 
to prove that this condition is also necessary
and indeed this in general is not the case. 
The main problem is that, at the best of our knowledges, 
the study of the Ornstein--Uhlenbeck semigroup on open 
domains is much more complicated. For example, in the 
whole space $T_t$ has an explicit integral representation 
which allows direct computations. In this direction, in 
\cite{LMP15} the authors study $BV$ functions on $X$ 
restricted to an open convex set $\Omega\subseteq X$ in 
terms of the Ornstein--Uhlenbeck semigroup on $\Omega$ 
(see \cite{Ca} for a first analysis of the 
Ornstein--Uhlenbeck and its properties on convex 
domains). However, In this case the convexity of $\Omega$ 
plays an essential role and it is not possible to 
generalize the techniques in \cite{Ca} for a general open 
domain. Further, as in \cite[Proposition 3.103]
{AmbFusPal00} and \cite[Proposition 3.9]{AmbMirManPal10} 
we describe the connections between the one dimensional 
section of $BV$ functions and directional derivatives.

Finally, in Section \ref{general_measure}, $X$ is a 
Hilbert space and, given $R\in\mathcal L(X)$, we consider 
a probability measure $\nu$ which is Fomin differentiable 
along the directions of $R^*(X)$. Starting from the 
results in \cite{DaPLun18}, we provide a characterization 
of $BV$ functions on open domains $O\subseteq X$ by means 
of the variation of a function on $O$ with respect to $
\nu$.

\subsection{Notations}
Let $X$ be a separable Banach space. We denote by $\|
\cdot\|_X$ its norm and by $X^*$ its topological dual, 
i.e., 
the set of bounded linear functionals on $X$. We denote 
by $\langle\cdot,\cdot\rangle$ the duality between $X$ 
and $X^*$. 
Given $x_1^*,\ldots,x_m^*\in X^*$, we denote by $
\pi_{x_1^*,\ldots, x_m^*}:X\longrightarrow \R^m$ the 
bounded linear map 
$\pi_{x_1^*,\ldots, x_m^*}x:=(\langle x,x_1^*\rangle,
\ldots,\langle x, x_m^*\rangle)$. 
If $F={\rm Span}\{x_1^*,\ldots,x_m^*\}\subseteq X^*$, we 
also write $\pi_F$ instead of 
$\pi_{x_1^*,\ldots, x_m^*}$.  
The symbol $\fcon_b^k(X)$ denotes the space of $k$-times 
Fr\'echet differentiable cylindrical functions with 
bounded derivatives up to the order $k$,
that is $u\in \fcon_b^k(X)$ if there exists $v\in 
C_b^k(\R^m)$ ($k$-times continuously differentiable 
functions
with bounded derivatives) such that $u(x)=v(\pi_{x_1^*,
\ldots, x_m^*}x)$ for 
some $m\in\N$ and $x_1^*,\ldots,x_m^*\in X^*$.

The symbol $C_b^1 (X)$ denotes the space of bounded 
functions from $X$ to $\R$ which are  
Fr\'echet differentiable, with bounded Frech{\'e}t 
derivative.

In the same spirit, we say that $E\subseteq X$
is a cylindrical set if there exists $m\in \N$, 
$x^*_1,\ldots, x^*_m\in X^*$ and $B\mathcal(\R^m)$
such that $E=\pi_{x_1^*,\ldots,x_m^*}^{-1}(B)$.

Given a Borel set $E\in \mathcal B(X)$ and an open set $O
\subseteq X$, $O^c$ is the complementary set of $O$, the 
writing $E\Subset O$ means that $E$ is a bounded subset 
of $O$
with ${\rm dist}(E,O^c)>0$, where ${\rm dist}(E,O^c):=
\inf\{\|x-y\|_E:x\in E, \ y\in O^c\}$. Further, for any 
open set $A\subseteq X$ and any $\eta>0$, 
we define $A_{-\eta}:=\{x\in A: d(x,A^c)>\eta\}$. For any 
$x\in X$ and $r>0$ we denote by $B(x,r):=\{y\in X:
\|y-x\|_X<r\}$. If $x=0$ we simply write $B(r)$ instead 
of $B(0,r)$.

If $f$ is a function defined on $O$, we denote by 
$\bar f$ its null extension.

\section{Definitions and preliminary results}
\label{pasticcio}

\subsection{Infinite-dimensional measure theory}
\label{inf_dim_meas_th}

Let $\mathcal B(X)$ be the Borel $\sigma$-field on $X$, 
$Y$ be a Hilbert space with inner product $[\cdot,
\cdot]_Y$. 
Since $X$ is separable, $\mathcal B(X)$ is generated by 
the family of the cylindrical sets 
(see \cite[Fernique Corollary in I.1.2]{VakTarCho87}). 
For any open set $O\subseteq X$, we denote by $\mathscr 
M(O,Y)$ the set of Borel countably additive measures on 
$O$ which take values 
in a Hilbert space $Y$. 
We stress that $\mathscr M(O,Y)\subseteq \mathscr M(X,Y)$. 
Indeed, if $\mu\in \mathscr M(O,Y)$ we extend $\mu$ on $X$ by introducing the measure $\tilde \mu(E)=\mu(E\cap O)$ for any $E\in \mathcal B(X)$. 
For any $\mu \in \mathscr M(O,Y)$ and $y\in Y$ we denote by $\mu_y$ the scalar measure $[\mu,y]_Y$. 
When $Y=\R$ we simply write $\mathscr M(O)$ instead of $\mathscr M(O,Y)$. We recall that the total variation of a measure $\mu\in\mathscr M(O,Y)$ is a
positive finire measure defined for any Borel set 
$ B\in \mathcal B(O)$ by
\begin{align*}
|\mu|(B):=\sup\left\{\sum_{n=1}^\infty |\mu(B_n)|_Y:\ 
B=\bigcup_{n\in \N} B_n \mbox{ and the Borel sets } B_n
\mbox{ are pairwise disjoint}\right\}, 
\end{align*}
We remind that since we are working in a separable Banach
space, any finite measure is a Radon measure.

\noindent
For a given $E\in \mathcal{B}(O)$ and 
$\mu\in \mathscr{M}(O,Y)$, we denote 
by $\mu\mres E\in \mathscr{M}(E,Y)$ 
the restriction of $\mu$ on $E$ defined by
\[
\mu\mres E (B)=\mu (E\cap B),\qquad
\forall B\in \mathcal{B}(E).
\]

\noindent
By using the polar decomposition $\mu=\sigma_\mu|\mu|$, 
where $\sigma_\mu:O\rightarrow Y$ is a $|\mu|$-measurable map which satisfies $|\sigma_\mu|_Y=1$ for $|\mu|$-a.e. $x\in O$, by the inclusion $\mathscr{M}(O,Y)\subseteq
(C_b(O,Y))^*$ we get the equalities
\begin{align*}
|\mu|(O)
= & \sup\left\{\int_O[ \sigma_\mu,\Phi]_Y d|\mu|:\ \Phi\in C_b(O,Y), \ |\Phi(x)|_Y\leq 1 \ \forall x\in O\right\} \\
= & \sup\left\{\int_O[\sigma_\mu,\Phi]_Y d|\mu|:\ \Phi\in \fcon_b^1(O,Y), \ |\Phi(x)|_Y\leq 1 \ \forall x\in O\right\}
\end{align*}
where 
 \begin{align*}
C_b(O,Y) :=&\{u:O\rightarrow Y {\textrm{ continuous and bounded}}\}, \\
\fcon_b^1(O,Y) := &
 \Big\{u:O\rightarrow Y: \\
&\ u(x)=\sum_{i=1}^n v_i(\pi_{x_1^*,\ldots, x_m^*}x)y_i, \ x\in O, \ n,m\in\N, v_i\in C_b^1(\R^m), \ \ y_i\in Y \ 
 i=1,\ldots,n\Big\}.
\end{align*}
If $Y=\R$, we simply write $C_b(O)$ and $\fcon_b^1(O)$. 

We denote by ${\rm Lip}_c(O,Y)$ the set of 
bounded Lipschitz $Y$-valued functions $G:X\rightarrow Y$ such that ${\rm supp} (G)\Subset O$. If $Y=\R$ we simply write ${\rm Lip}_c(O)$. It is clear that 
if $O_1\subseteq O_2$ then $Lip_c(O_1,Y)
\subseteq Lip_c(O_2,Y)\subseteq Lip_c(X,Y)$. 
Finally, ${\rm Lip}_b(X,Y)$ denotes 
the space of $Y$-valued bounded Lipschitz continuous functions on $X$.

\noindent
The space $C^1_c(O)$ and $C^1_c(O,Y)$ are defined in a 
similar way.

\begin{lemma}\label{banana}
Let $O\subseteq X$ be an open set, and let $\mu\in\mathscr M(O,Y)$. Then, for any open set $A\subseteq O$ we have
\begin{align}
|\mu|(A)=\sup\left\{\int_A[ \sigma,G] _Yd|\mu|: G\in {\rm Lip}_c(A,Y), \ |G(x)|_Y\leq 1 \ \forall x\in O\right\},
\label{corsa}
\end{align}
where $\mu =\sigma |\mu|$ is the polar decomposition of $\mu$. In particular, for any $y\in Y$
\begin{align}
|\mu_y|(A)=\sup\left\{\int_AGd \mu_y: G\in {\rm Lip}_c(A), \ |G(x)|\leq 1 \ \forall x\in A\right\}.
\label{corsaro}
\end{align}
\end{lemma}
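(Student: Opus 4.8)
The plan is to prove \eqref{corsa} by establishing two inequalities, with the hard direction being the ``$\leq$'' bound, which requires approximating the optimal continuous competitor by Lipschitz functions with bounded support well inside $A$. For the easy inequality ``$\geq$'', I would observe that ${\rm Lip}_c(A,Y)$ is a subclass of the competitors allowed in the already-established variational formula for $|\mu|(A)$ (the formula displayed just before the lemma, applied on the open set $A$ in place of $O$): every $G\in {\rm Lip}_c(A,Y)$ with $|G|_Y\leq 1$ is in particular a bounded (Borel) function valued in the unit ball, so $\int_A[\sigma,G]_Y\,d|\mu|\leq |\mu|(A)$, and taking the supremum over such $G$ gives the bound. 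The only subtlety is that $G$ is defined on all of $X$ but supported in a set $E\Subset A$, so the integral over $A$ is really an integral over $E$ and no boundary contribution appears.

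For the main inequality ``$\leq$'', I would start from the polar decomposition and the fact that $|\mu|(A)=\int_A[\sigma,\sigma]_Y\,d|\mu|$ since $|\sigma|_Y=1$ $|\mu|$-a.e. First I would exhaust $A$ from inside by the open sets $A_{-\eta}=\{x\in A:\ d(x,A^c)>\eta\}$ as $\eta\downarrow 0$; by inner regularity of the finite positive measure $|\mu|$ on the separable (hence Radon) space, $|\mu|(A_{-\eta})\uparrow |\mu|(A)$. On each fixed $A_{-\eta}$, the target field $\sigma$ is a $|\mu|$-measurable unit vector field, and I would approximate it in $L^1(A_{-\eta},|\mu|;Y)$ by a function $\Phi$ in ${\rm Lip}_b(X,Y)$, for instance using that Lipschitz functions are dense in $L^1$ with respect to a finite Borel measure on a metric space (Lusin's theorem followed by a Lipschitz mollification/extension, e.g.\ via inf-convolution). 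After truncating so that $|\Phi|_Y\leq 1$ (the radial retraction onto the closed unit ball is $1$-Lipschitz and does not increase the $L^1$ distance to the unit vector field $\sigma$), I would multiply $\Phi$ by a Lipschitz cutoff $\chi$ with $\chi\equiv 1$ on $A_{-2\eta}$, $\chi\equiv 0$ outside $A_{-\eta}$, and $0\leq\chi\leq 1$; the distance function $x\mapsto d(x,A^c)$ is $1$-Lipschitz, so such a cutoff exists and the resulting $G=\chi\Phi$ lies in ${\rm Lip}_c(A,Y)$ with $|G|_Y\leq 1$. Then $\int_A[\sigma,G]_Y\,d|\mu|$ is close to $\int_{A_{-\eta}}[\sigma,\sigma]_Y\,d|\mu|=|\mu|(A_{-\eta})$, which is close to $|\mu|(A)$; letting the approximation errors and then $\eta$ tend to zero yields $|\mu|(A)\leq$ the supremum.

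I expect the main obstacle to be the approximation step: producing a Lipschitz, bounded, ball-valued field $G$ whose support sits inside $A$ and whose inner product with $\sigma$ recovers almost all of $|\mu|(A)$. The delicate points are (i) that $\sigma$ is merely $|\mu|$-measurable, so one cannot simply use continuity and must invoke an $L^1(|\mu|)$-density argument for Lipschitz functions in a metric-space setting, and (ii) controlling the two competing error terms simultaneously, namely the $L^1$ approximation error $\int_{A_{-\eta}}|\sigma-\Phi|_Y\,d|\mu|$ and the mass $|\mu|(A\setminus A_{-2\eta})$ lost near the boundary, the latter being handled by the inner exhaustion. The radial retraction onto the unit ball is what makes the constraint $|G|_Y\leq 1$ compatible with $L^1$-closeness, since it is $1$-Lipschitz and fixes $\sigma$ (a unit vector).

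Finally, \eqref{corsaro} follows as the scalar special case: for fixed $y\in Y$, the scalar measure $\mu_y=[\mu,y]_Y$ has polar decomposition $\mu_y=\sigma_{\mu_y}|\mu_y|$ with $\sigma_{\mu_y}\in\{-1,+1\}$, and applying the already-proved \eqref{corsa} with $Y=\R$ (identifying $G\in{\rm Lip}_c(A)$ with a real scalar field and $[\sigma_{\mu_y},G]_\R=\sigma_{\mu_y}G$) gives $|\mu_y|(A)=\sup\{\int_A G\,d\mu_y:\ G\in{\rm Lip}_c(A),\ |G|\leq 1\}$, which is exactly \eqref{corsaro}.
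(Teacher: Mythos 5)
Your overall strategy is sound and the easy inequality and the reduction of \eqref{corsaro} to the scalar case of \eqref{corsa} match the paper, but for the hard inequality you take a genuinely different route. The paper first invokes the duality $\mathscr M(A,Y)\subseteq (C_b(A,Y))^*$ to produce a near-optimal \emph{continuous} competitor $\varphi_\varepsilon$, localizes to a compact set $K\subseteq A$ with $|\mu|(A\setminus K)<\varepsilon$, reduces $\sigma$ to finite-dimensional range via an orthonormal basis of $Y$, approximates $\varphi_\varepsilon$ uniformly on $K$ by a cylindrical field $g_\varepsilon$ via Stone--Weierstrass, and only then applies the radial retraction and a Lipschitz cutoff supported in a $\delta/2$-neighbourhood of $K$. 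You instead approximate $\sigma$ \emph{directly} in $L^1(|\mu|;Y)$ by Lipschitz fields and never pass through $C_b$ duality or Stone--Weierstrass; this is more elementary and self-contained (it only uses inner regularity of $|\mu|$ and the $1$-Lipschitz nearest-point projection onto the unit ball of the Hilbert space $Y$, which fixes $\sigma$ a.e.). One caveat: the phrase ``Lusin followed by a Lipschitz extension'' is loose, since a continuous function on a compact set need not admit a Lipschitz extension; the clean route is to approximate $\sigma$ by simple Borel functions with values in a finite-dimensional subspace of $Y$ and then approximate indicators of Borel sets by $\min(1,k\,\dist(\cdot,C^c))$-type functions, which is standard and gives the density you need.

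There is one concrete point you must repair. In this paper ${\rm Lip}_c(A,Y)$ requires ${\rm supp}(G)\Subset A$, i.e.\ the support must be \emph{bounded} in addition to having positive distance from $A^c$. Your cutoff $\chi$ is supported in $A_{-\eta}$, which has distance at least $\eta$ from $A^c$ but is in general unbounded (e.g.\ if $A=X$ then $A_{-\eta}=X$), so $G=\chi\Phi$ need not be admissible. The fix is immediate: exhaust $A$ by the bounded open sets $A_{-\eta}\cap B(1/\eta)$ (or, as the paper does, by compact sets $K$ furnished by the Radon property of $|\mu|$, whose $\delta/2$-neighbourhoods are automatically bounded), and build the cutoff relative to these. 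With that adjustment the argument goes through and yields the same conclusion as the paper's proof.
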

\begin{proof}
We limit ourselves to show \eqref{corsa}, since from it we easily deduce \eqref{corsaro}.
Clearly, for any $G\in {\rm Lip}_c(A,Y)$ with $|G(x)|_Y\leq 1$ for any $x\in X$, we have $[\sigma,G]_Y\leq1$, hence
\begin{align*}
\int_A[ \sigma,G] _Y\ d|\mu|\leq |\mu|(A).
\end{align*}\\
We have to prove the converse inequality. Let $\varepsilon>0$. Since $|\mu|$ is a Radon measure, there exists 
a compact set $K\subseteq A$ such that $|\mu|(A\setminus K)<\varepsilon$. 
By the properties of vector measures,
there exists $\varphi_\varepsilon\in C_b(A,Y)$ with  $\|\varphi_\varepsilon\|_{L^\infty(O,\gamma)}\leq 1$ and  such that
\begin{align*}
|\mu|(A)\leq \int_A[ \sigma,\varphi_\varepsilon]_Yd|\mu|+\varepsilon\leq \int_K[\sigma,\varphi_\varepsilon]_Yd|\mu|+2\varepsilon. 
\end{align*}
Let $\{y_n:n\in\N\}$ be an orthonormal basis of $Y$. For $n\in\N$ we consider 
$\sigma_n :=\sum_{i=1}^n [\sigma,y_i]_Y y_i$. From the dominated convergence theorem there exists $n_\varepsilon\in\N$ such that $\sigma_{n_\varepsilon}=:\sigma_\varepsilon$ is 
a Borel function with range contained in ${\rm span}\{y_1,\ldots,y_{n_\varepsilon}\}$, $|\sigma_\varepsilon|_Y\leq 1$ everywhere and
\begin{align*}
\|\sigma-\sigma_\varepsilon\|_{L^1(K,|\mu|)}\leq \varepsilon,
\quad |\mu|(A)\leq \int_K[\sigma_\varepsilon,\varphi_\varepsilon]_Yd|\mu| +3\varepsilon.
\end{align*}
Since $\sigma_\varepsilon$ has finite dimensional range, only a finite number of components of $\varphi_\varepsilon$ is involved in the above integral. 
From the Stone-Weierstrass Theorem, there exists $g_\varepsilon\in \fcon_b^1(X,Y)$ with finite dimensional range such that
$\|g_\varepsilon-\varphi_\varepsilon\|_{L^\infty(K)}<\varepsilon$. To conclude, let $\delta:=d(K,A^c)>0$ and let us consider a Lipschitz function 
$\psi$ such that $\psi\equiv 1$ in $K$ and $\psi\equiv 0$ in $\left(\cup_{x\in K}B(x,\delta/2)\right)^c$. Hence, setting
\begin{align*}
F(h):=
\begin{cases}
h, & |h|_Y\leq 1, \\
h/|h|_Y, & |h|_Y\geq 1,
\end{cases}
\end{align*}
for any $h\in Y$, the function $G_\varepsilon:=\psi \cdot(F\circ g_\varepsilon)$ belongs to ${\rm Lip}_c(A,Y)$, $|G_\varepsilon(x)|_Y\leq 1$ for any $x\in Y$ and
$\|G_\varepsilon-\varphi_\varepsilon\|_{L^\infty(K)}\leq 2\varepsilon$, from which it follows that
\begin{align*}
|\mu|(A)
\leq & \int_K[\sigma_\varepsilon,G_\varepsilon]_Y d|\mu|+5\varepsilon
\leq  \int_A[\sigma,G_\varepsilon]_Y d|\mu| +6\varepsilon\\
\leq & \sup\left\{\int_A[ \sigma,G]_Yd|\mu|:G\in{\rm Lip}_c(A,Y), \ |G(x)|_Y\leq 1\ \forall x\in X\right\}+6\varepsilon.
\end{align*}
The arbitrariness of $\varepsilon$ gives the thesis.
\end{proof}

From the proof of Lemma~\ref{banana} we immediately deduce the following result.
\begin{cor}\label{spaghetto}
Let $\mu\in\mathscr M(O,H)$. Then, for any open set $A\subseteq O$ we have 
\begin{align}
|\mu|(A)=\sup\left\{\int_A[ \sigma,G] _Hd|\mu|: F\subseteq QX^* \mbox{ fin. dim.},\ G\in {\rm Lip}_c(A,F), \ |G(x)|_F\leq 1 \ \forall x\in A \right\},
\label{corsa1}
\end{align}
where $\mu=\sigma |\mu|$ is the polar decomposition of $\mu$.
\end{cor}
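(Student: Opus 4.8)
The plan is to re-run the proof of Lemma~\ref{banana} applied with $Y=H$, keeping careful track of where the finite-dimensional target vectors are chosen, and to exploit the density of $QX^*$ in the Cameron--Martin space. First I would dispose of the easy inequality, namely that the supremum in \eqref{corsa1} is bounded above by $|\mu|(A)$; this is verbatim the corresponding step in Lemma~\ref{banana}, since every competitor $G\in{\rm Lip}_c(A,F)$ with $F\subseteq QX^*$ finite dimensional and $|G(x)|_F\leq 1$ obeys the pointwise bound $[\sigma,G]_H\leq|\sigma|_H\,|G|_H\leq 1$, whence $\int_A[\sigma,G]_H\,d|\mu|\leq|\mu|(A)$. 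Thus only the reverse inequality needs a fresh comment.

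The single point at which I would modify the argument of Lemma~\ref{banana} is the choice of the orthonormal basis $\{y_n:n\in\N\}$ of $H$ entering the truncations $\sigma_n=\sum_{i=1}^n[\sigma,y_i]_H y_i$. Here I would use that $QX^*$ is a dense linear subspace of the separable Hilbert space $H$: applying Gram--Schmidt to a countable dense subset of $QX^*$ produces an orthonormal basis $\{y_n\}$ of $H$ with $y_n\in QX^*$ for every $n$, each $y_n$ being a finite linear combination of elements of the subspace $QX^*$ and hence again an element of $QX^*$. With this basis, the truncated map $\sigma_\varepsilon=\sigma_{n_\varepsilon}$ built in Lemma~\ref{banana} automatically takes its values in the finite-dimensional subspace $F:={\rm span}\{y_1,\ldots,y_{n_\varepsilon}\}\subseteq QX^*$.

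It then remains to verify that the test function produced at the end of Lemma~\ref{banana} inherits the same target $F$. Because $\sigma_\varepsilon$ has range in $F$, only the components of $\varphi_\varepsilon$ along $y_1,\ldots,y_{n_\varepsilon}$ contribute to the relevant integral, so the Stone--Weierstrass step can be performed inside $F$, yielding a cylindrical $g_\varepsilon$ with range in $F$; the radial retraction onto the closed unit ball introduced in the proof of Lemma~\ref{banana} is radial and therefore maps the subspace $F$ into itself, and multiplication by the Lipschitz cut-off $\psi$ preserves it as well. Hence the resulting $G_\varepsilon$ belongs to ${\rm Lip}_c(A,F)$ with $|G_\varepsilon(x)|_F\leq 1$, and the identical chain of estimates as in Lemma~\ref{banana} gives $|\mu|(A)\leq\int_A[\sigma,G_\varepsilon]_H\,d|\mu|+6\varepsilon$; letting $\varepsilon\downarrow 0$ then yields \eqref{corsa1}. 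The only genuinely new ingredient relative to Lemma~\ref{banana} is the density of $QX^*$ in $H$, and this is exactly where the mild difficulty lies: it is what allows the finite-dimensional range to be located inside $QX^*$ rather than merely inside $H$, while every remaining estimate is a direct repetition of the previous proof.
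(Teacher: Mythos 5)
Your argument is correct and is exactly the route the paper intends: the authors simply assert that the corollary follows from the proof of Lemma~\ref{banana}, and the one genuine modification you identify --- running the truncation $\sigma_n=\sum_{i\le n}[\sigma,y_i]_Hy_i$ with respect to an orthonormal basis of $H$ lying in $QX^*$ (whose existence the paper already records via \cite[Corollary 3.2.8]{Bog98}) so that the approximating test function lands in a finite-dimensional $F\subseteq QX^*$ --- is precisely the point being used. Nothing further is needed.
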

\begin{rmk}
{We point out that the space $C_c ^1 (A,Y)$
is not in general a good set of test functions.
Indeed, in the previous proof we have used 
Lipschitz cut-off and the existence of
$C^1$ cut-off functions is strictly related to
the separability of $X^*$  (see 
for instance \cite[Theorem 5.3]{Deville}).} 
\end{rmk}

\begin{rmk}
\label{misura bordo}
Let $\mu$ be a signed Radon measure on $X$ and let $O\subseteq X$ be an open set. By the additivity of $\mu$, and since the sets $ \partial O_{-t}$ are pairwise disjoint,
there exist at most countably many $t\in(0,1)$ 
such that $|\mu|(\partial O_{-t})\neq 0$.
\end{rmk}

\subsection{The abstract Wiener space}
We consider a nondegenerate centred Gaussian measure $\gamma$ on $X$, i.e., $\gamma$ is a probability measure such that, for any $x^*\in X^*$ 
the image measure $\gamma\circ (x^*)^{-1}$ is a centred Gaussian measure on $\R$ and its Fourier transform satisfies
\begin{align*}
\widehat\gamma(x^*):=\int_Xe^{i\langle x,x^*\rangle}\gamma(dx)=\exp\left(-\frac12\langle Qx^*,x^*\rangle\right),
\end{align*}
for some nonnegative and symmetric operator $Q\in\mathcal L(X^*,X)$, said the covariance operator. 
The nondegeneracy hypothesis on $\gamma$ means that $Q$ is a positive definite operator, 
that is, $\langle Qx^*,x^*\rangle>0$ for any $x^*\neq0$. 

Moreover, $Q$ is uniquely determined by
\begin{align*}
\langle Qx^*,y^*\rangle=\int_X\langle x,x^*\rangle\langle x,y^*\rangle\gamma(dx).
\end{align*}
The boundedness of $Q$ follows from Fernique's Theorem (see \cite[Theorem 2.8.5]{Bog98}), which states that there exists $\alpha>0$ such that
\begin{align*}
\int_Xe^{\alpha\|x\|^2_X}\gamma(dx)<+\infty.
\end{align*}
Further, it is easy to prove that the function $x\mapsto\langle x,x^*\rangle$ belongs to $L^p(X,\gamma)$, 
for any $p\in[1,+\infty)$. Let us denote by $R^*:X^*\longrightarrow L^2(X,\gamma)$ the embedding of $X^*$ in $L^2(X,\gamma)$, 
and by $\mathscr H$ the closure of $R^*X^*$ in $L^2(X,\gamma)$. $\mathscr H$ is called the {\it reproducing kernel} of $\gamma$ and 
clearly $R^*X^*$ is dense in it. By putting together Fernique's Theorem and \cite[Theorem 2.10.9]{Bog98}  for any $\hat h \in \mathscr H$ there exists an $\alpha> 0$   such that
\begin{align}\label{palo}
\int_Xe^{\alpha(\hat h(x))^2}\gamma(dx)<+\infty.
\end{align}
It is also possible to prove that $Q=RR^*$, where $R:\mathscr H\longrightarrow X$ is the operator defined by 
the Bochner integral
\begin{align*}
R\hat h:=\int_X\hat h(x)x\gamma(dx), \quad \hat h\in\mathscr H,
\end{align*}
and that $R$ is a injective and compact operator. The space $H:=R\mathscr H\subseteq X$ is called the Cameron--Martin space, and it plays a crucial role
in infinite dimensional analysis. $H$ enjoys nice properties: indeed, $H$ is dense subspace of $X$ and it is a Hilbert space endowed with 
the scalar product $[h_1,h_2]_H:=\langle \hat h_1,\hat h_2\rangle_{L^2(X,\gamma)}$, where $\hat h_1,\hat h_2\in \mathscr H$ 
and $h_i:=R\hat h_i$, $i=1,2$. In particular, if $x^*,y^*\in X^*$, then $[Qx^*,Qy^*]_H=\langle Q x^*,y^*\rangle$. We denote by $|\cdot|_H$ the norm in $H$ induced
by $[\cdot,\cdot]_H$. When no confusion is possible, we simply write $|\cdot |$ instead of $|\cdot |_H$. Moreover, the embedding $H\hookrightarrow X$ is compact and $\gamma(H)=0$ if and only if $X$ is infinite dimensional. The importance of $H$ follows from the Cameron-Martin Theorem (see \cite[Theorem 2.4.5]{Bog98}). For any $h\in X$ let us consider the shifted measure $\gamma^h:=\gamma(\cdot-h)$. Then, $\gamma^h$ is 
absolutely continuous with respect to $\gamma$ if and only if $h\in H$. In this case, if we write $h=R\hat h$,
we have
\begin{align*}
\gamma^h(dx):=\exp\left(\hat h(x)-\frac12|h|^2_H\right)\gamma(dx),
\end{align*}
for any $h\in H$.

\noindent
In the sequel, when $h\in H$, we denote by $\hat{h}
\in \mathscr{H}$ the corresponding element in
the reproducing kernel such that $R \hat{h}=h$.
The non degeneracy of the measure implies that
for $h\in QX^*$ there exists a unique element
$x^*\in X^*$ such that $h=Qx^*$.

For $h\in QX^*$, $h=Qx^*$, we denote by 
$X_h^\perp:={\rm Ker}(\pi_{x^*} )$,  by $\gamma_h$  the image of $\gamma$ under $\pi_{x^*}$ (i.e., $\gamma_h=\gamma\circ \pi_{x^*}^{-1}$) and by $\gamma_h^\perp$ the Gaussian measure on $X$ concentrated on $X_h^\perp$ which is the image of $\gamma$ under ${\rm Id}_X-\pi_{x^*}$. 
With this construction we get the decomposition of $\gamma=\gamma_h\otimes \gamma_h^\perp$.

Finally, there exists an orthonormal basis $\{h_n:n\in\N\}$ of $H$ such that $h_n=Qx_n^*$ with $x^*_n\in X^*$ 
for any $n\in\N$ (see \cite[Corollary 3.2.8]{Bog98}). In the following sections,  we denote
by $\pi_m$ the projection $\pi_{x_1^*,\ldots, x_m^*}$.

\subsection{Sobolev spaces and Ornstein-Uhlenbeck semigroup} 
Due to the Cameron-Martin Theorem, the derivatives along the directions of $H$ will be of crucial importance. Therefore, for any $f\in\fcon_b^1(X)$ and any $h\in H$ we define
\begin{align*}
\partial_hf(x):=\lim_{t\rightarrow0}\frac{f(x+th)-f(x)}{t},
\end{align*}
and
\begin{align*}
\partial_h^*f(x):=\partial_hf(x)-f(x)\hat h(x).
\end{align*}
For any $f\in \fcon_b^1(X)$ with $f(x)=\varphi(\pi_{x_1^*,\ldots, x_m^*}x)$, $\varphi\in C_b^k(\R^m)$ and $x_1^*,\ldots,x_m^*\in X^*$, we set
\begin{align*}
\nabla_Hf(x):=\sum_{i=1}^m\partial_{_i}\varphi(\pi_{x_1^*,\ldots, x_m^*}x)Qx_i^*.
\end{align*}
Clearly, it follows that $\partial_h f(x)=[\nabla_Hf(x),h]_H$ for any $h\in H$.
For a function $\Phi\in \fcon_b^1(X,H)$, if we write
\[
\Phi(x)=\sum_{i=1}^m\varphi_i(x)k_i
\] 
with $m\in \N$, $k_1,\ldots,k_m\in H$ and $\varphi_1,\ldots,\varphi_m\in\fcon _b^1(X)$,
then the $\gamma$-divergence $\Phi$ is defined by
\begin{align*}
{\rm div}_\gamma\Phi(x):=\sum_{i=1}^m\partial_{k_i}^*\varphi_i(x).
\end{align*}
The operators $\partial_h^*$ and ${\rm div}_\gamma$ are,
up to the sign, 
the adjoint operators of $\partial_h$ and $\nabla_H$ in $L^2$, respectively, namely, 
\begin{align}
\int_X\partial_h f gd\gamma=-\int_Xf\partial_h^* gd\gamma, \qquad \int_X[\nabla_Hf,\Phi]_Hd\gamma=-\int_Xf{\rm div}_\gamma \Phi d\gamma,
\label{federazione}
\end{align}
for any $f,g\in\fcon_b^1(X)$ and any $\Phi\in\fcon_b^1(X,H)$. Integration by parts formulae \eqref{federazione} imply that
$\nabla_H:\fcon_b^1(X)\rightarrow L^p(X,\gamma;H)$ is a closable operator for any $p\in[1,+\infty)$ (see \cite[Section 5.2]{Bog98}) and
we still denote by $\nabla_H$ its closure. 
In the next definition we follow the notations of \cite{Fu00}.
\begin{defn}
 For $p\in[1,+\infty)$ we define the \emph{Sobolev space } $\mathbb D^{1,p}(X,\gamma)$ as the domain of the closure of $\nabla_H$ in $L^p(X,\gamma;H)$. 
 We denote the closure as $\nabla_H$
 and $\mathbb D^{1,p}(X,\gamma)$
is a Banach space endowed with the norm 
 \begin{align}
    \left\Vert f\right\Vert_{\mathbb D^{1,p}(X,\gamma)}=\left\Vert f\right\Vert_{L^p(X,\gamma)}+\left\Vert \nabla_H f\right\Vert_{L^p(X,\gamma;H)}.
  \end{align}
\end{defn}

Notice that the same space is denoted by $W^{p,1}(X,\gamma)$ in \cite{Bog98}. 

\begin{rmk}
By approximation it is possible to prove that the first equality in \eqref{federazione} holds true for any $f\in \mathbb D^{1,p}(X,\gamma)$, any $g\in \mathbb D^{1,q}(X,\gamma)$,
with $1<p<+\infty$ and $q=p'$ being its conjugate exponent.
Further, the second equality in \eqref{federazione} holds true for any $f\in \mathbb D^{1,p}(X,\gamma)$ and any $\Phi\in {\rm Lip}_b(X,H)$, with $1\leq p<+\infty$ and 
${\rm Lip}_b(X,H)$ has been introduced in Subsection \ref{inf_dim_meas_th} (see \cite[Proposition 5.8.8]{Bog98}).
\label{salama}
\end{rmk}

Let us introduce the Ornstein-Uhlenbeck semigroup $(T_t)_{t\geq0}$ as follows: for any $f\in L^1(X,\gamma)$, we set
\begin{align}
\label{bassano}
T_tf(x):=\int_Xf(e^{-t}x+\sqrt{1-e^{-2t}}y)\gamma(dy), \quad t\geq0.
\end{align}
Let us recall that $(T_t)_{t\geq0}$ is a strongly continuous contraction semigroup on $L^p(X,\gamma)$, for any $p\in[1,+\infty]$ (see \cite[Proposition 2.4]{Shi04}). Moreover, if $f\in L^p(X,\gamma)$ with $p>1$, then $T_tf\in \mathbb D^{k,q}(X,\gamma)$ for any $k\in\N$ and any $q>1$ (see \cite[Proposition 5.4.8]{Bog98}). 
Further, from the definition of $T_t$ and of $\nabla_H$, if $f\in \mathbb D^{1,1}(X,\gamma)$ then $T_tf\in \mathbb D^{1,1}(X,\gamma)$ and 
\begin{align*}
\nabla_HT_tf=e^{-t}T_t\nabla_Hf,
\end{align*}
where the above equality reads componentwise. 

For every $m\in\N$ and $f\in L^1(X,\gamma)$ we introduce the canonical cylindrical approximation $\mathbb E_mf$ of $f$ as the conditional expectation relative to the $\sigma$-algebra generated by $\{\langle x,x_1^*\rangle, \ldots,\langle x, x_m^*\rangle\}$. \cite[Corollary 3.5.2]{Bog98} show that
\begin{align*}
 \mathbb{E}_mf(x):=\int_Xf(\pi_m x+({\rm Id}_X-\pi_m)y)\gamma(dy).
\end{align*}
 
\begin{defn} 
\label{def:e_h}
Let $h\in H$, let $O\subseteq X$ be an open set and let $f:O\rightarrow \R$ be a $\gamma$-measurable function. For every  $y\in X$ we set
\[
O_{y}:=\{t\in\R|y+th\in O\}
\]
and $f_{y}:O_{y}\rightarrow \R$ defined as 
\[
 f_{y}(t):=f(y+th).
\]
We denote by $D{\E}_{h}^{O}$ the set 
of $\gamma-$measurable functions $f$ such that for $\gamma$-a.e. $y\in X$, the function $f_{y}$ on $O_{y}$ has representative $\tilde{f_{y}}$ (i.e. $f_y(t)=\tilde{f}_y(t)$ for a.e. $t\in\R$ with respect to the Lebesgue measure) which is locally absolutely continuous. 
It is clear that for $\gamma$-a.e. $y\in X$ and for  a.e. $t\in O_{y}$, $f'_y(t)$ is well defined and for such a $t$ we have
\begin{align}\label{civetta}
 f_{y}'(t)=f_{y+rh}'(t-r), \quad \forall r>0.
\end{align}
If for such a $y$ and $t$ we put
\[
\partial_{h}f(y+th):=f_{y}'(t),
\]
$\partial_h f$ is well defined $\gamma$-a.e. and, by \eqref{civetta}, it does not depend on $t$.
\end{defn}

From the definition of $D{\E}^O_h$, it follows that any $f\in D{\E}_{h}^{O}$ has a representative
$\tilde{f}$ such that  for $\gamma$-a.e. $y\in X$
the function $\tilde{f_{y}}$ is locally absolutely continuous.

We can now give the definition of Sobolev spaces on
arbitrary open sets; this approach was used 
in the case $p=2$ and on the whole space $X$ 
by \cite{Hino01} and generalized for 
any $p$ and on domains in \cite{Bogachev2014}

\begin{defn}
\label{def:Sobolev_alternative}Given $p\in[1,+\infty]$, we say that
$f\in W^{1,p}(O,\gamma)$ if $f\in L^{p}(O,\gamma)$, $f\in D{\E}_{h}^{O}$ for all $h\in H$, and there
exists $\nabla_{H}f\in L^{p}(O,\gamma;H)$ such that $\partial_{h}f=[\nabla_{H}f,h] _{H}$.\end{defn}

It is a standard argument to prove that
$W^{1,p}(O,\gamma)$ is a Banach space with norm
given by
\[
\left\Vert f\right\Vert _{W^{1,p}(O,\gamma)}=\left\Vert  f\right\Vert _{L^{p}(O,\gamma)}+
\left\Vert \nabla_{H}f \right\Vert _{L^{p}(O,\gamma;H)}.
\]

\begin{rmk} \label{bottiglia}
Let $f\in W^{1,p}(X,\gamma)$. By definition it follows that $f_{|O}\in W^{1,p}(O,\gamma)$. 
\end{rmk}

The proof of the following result can be deduced 
by the result \cite[Proposition 5.4.6]{Bog98}; we repeat
the proof for reader's convenience.

\begin{lemma}\label{foglio}
$W^{1,p}(X,\gamma)=\mathbb D^{1,p}(X,\gamma)$ for any
$p\in [1,\infty)$.
\end{lemma}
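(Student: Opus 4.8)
The plan is to establish the two inclusions $\mathbb D^{1,p}(X,\gamma)\subseteq W^{1,p}(X,\gamma)$ and $W^{1,p}(X,\gamma)\subseteq \mathbb D^{1,p}(X,\gamma)$ separately, exploiting that $\mathbb D^{1,p}$ is by definition the closure of $\nabla_H$ starting from cylindrical functions $\fcon_b^1(X)$, while $W^{1,p}$ is defined pointwise along lines via the class $D\E_h^X$. First I would check the easy inclusion $\mathbb D^{1,p}\subseteq W^{1,p}$. For $f\in\fcon_b^1(X)$ the two notions of $\partial_h f$ plainly agree and the function is smooth along every line, so $f\in W^{1,p}$ with the same gradient; the point is then to pass to the closure. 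Given $f\in\mathbb D^{1,p}$ there is a sequence $f_n\in\fcon_b^1(X)$ with $f_n\to f$ in $L^p$ and $\nabla_H f_n\to\nabla_H f$ in $L^p(X,\gamma;H)$. The task is to show $f\in D\E_h^X$ and $\partial_h f=[\nabla_H f,h]_H$; here I would invoke the one-dimensional disintegration $\gamma=\gamma_h\otimes\gamma_h^\perp$ from the Wiener-space subsection, so that $L^p$-convergence on $X$ yields, along a subsequence, $L^p$-convergence of the sections $f_{n,y}$ and of their derivatives for $\gamma$-a.e. line. Absolute continuity passes to the limit under $L^1_{\rm loc}$ convergence of derivatives on a line, giving a locally absolutely continuous representative of $f_y$ whose derivative equals $[\nabla_H f,h]_H$ a.e.

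Second I would prove the reverse inclusion $W^{1,p}\subseteq\mathbb D^{1,p}$, which is where the real work lies. Following the cited \cite[Proposition 5.4.6]{Bog98}, the natural route is to regularize by the Ornstein--Uhlenbeck semigroup and by the finite-dimensional conditional expectations $\mathbb E_m$. Given $f\in W^{1,p}(X,\gamma)$, I would consider $f_{m}:=\mathbb E_m f$ or the smoothed family $T_t\mathbb E_m f$, which lie in $\mathbb D^{1,p}$ (indeed in every $\mathbb D^{k,q}$ for the $T_t$-regularization, by the smoothing property recorded after \eqref{bassano}). The key identity to verify is that the $W^{1,p}$-derivative behaves correctly under these operations, i.e.\ $\nabla_H(\mathbb E_m f)=\mathbb E_m(\nabla_H f)$ componentwise along the first $m$ coordinates and $\nabla_H(T_t f)=e^{-t}T_t\nabla_H f$; the latter is exactly the commutation relation already stated in the excerpt. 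Since $\mathbb E_m f\to f$ and $\mathbb E_m(\nabla_H f)\to\nabla_H f$ in $L^p$ by the martingale/contraction convergence of conditional expectations, the approximants converge in the graph norm of $\nabla_H$, and closedness of $\nabla_H$ on $\mathbb D^{1,p}$ forces $f\in\mathbb D^{1,p}$ with the expected gradient.

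The main obstacle I anticipate is verifying the commutation $\nabla_H\mathbb E_m f=\mathbb E_m\nabla_H f$ for $f$ merely in $W^{1,p}$, i.e.\ when $f$ is only known to be absolutely continuous along lines rather than genuinely differentiable. The delicate point is that $\mathbb E_m$ integrates out the infinitely many ``transverse'' directions, and one must justify differentiating under this integral using only the pointwise line-derivatives $\partial_h f$ supplied by the definition of $D\E_h^X$. I would handle this by testing against cylindrical functions: using the integration-by-parts formula \eqref{federazione} together with the characterization of $W^{1,p}$ via the sections $f_y$, one shows that for every $h\in QX^*$ the measure $\partial_h f\,d\gamma$ satisfies the adjoint relation, and then that $\mathbb E_m$ commutes with $\partial_h$ for $h$ in the span of the first $m$ generators because along those directions the disintegration is exactly one-dimensional Gaussian and the Cameron--Martin integration-by-parts applies line by line. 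A minor additional care is needed because $p$ may equal $1$, where the semigroup is still a contraction but lacks the full hypercontractive smoothing; there I would lean on the $\mathbb E_m$-approximation rather than on $T_t$, since conditional expectations converge in $L^1$ as well.
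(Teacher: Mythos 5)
Your overall strategy coincides with the paper's: the inclusion $\mathbb D^{1,p}(X,\gamma)\subseteq W^{1,p}(X,\gamma)$ is obtained by passing to the limit from cylindrical functions (the paper simply invokes completeness of $W^{1,p}(X,\gamma)$, which is the one-dimensional-sections argument you sketch), and the reverse inclusion is attacked through the conditional expectations $\mathbb{E}_m$. However, your proof of $W^{1,p}(X,\gamma)\subseteq\mathbb D^{1,p}(X,\gamma)$ has a genuine gap at its final step. You assert that the approximants $\mathbb{E}_m f$ ``lie in $\mathbb D^{1,p}$'' and then conclude by ``closedness of $\nabla_H$ on $\mathbb D^{1,p}$''. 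But $\mathbb D^{1,p}(X,\gamma)$ is by definition the domain of the closure of $\nabla_H$ initially defined on $\fcon_b^1(X)$, so closedness can only be applied to approximants already known to belong to that domain. For $f\in W^{1,p}(X,\gamma)$ the function $\mathbb{E}_m f$ is cylindrical, $\mathbb{E}_m f=g_m\circ\pi_m$, but its profile $g_m$ is merely an element of the finite-dimensional weighted Sobolev space $W^{1,p}(\R^m,\gamma_m)$, not a $C_b^1(\R^m)$ function; the claim $\mathbb{E}_m f\in\mathbb D^{1,p}(X,\gamma)$ is exactly equivalent to a finite-dimensional density statement that you never prove. The paper closes this hole explicitly: it approximates each profile $g_m$ by functions $g_{m,k}\in C_b^1(\R^m)$ (finite-dimensional mollification) and then extracts a diagonal sequence in $\fcon_b^1(X)$ converging to $f$ in the graph norm. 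Without that intermediate step your appeal to closedness is circular.

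Your proposed fallbacks do not repair this. The regularization $T_t\mathbb{E}_m f$ does land in $\mathbb D^{1,q}(X,\gamma)$ when $p>1$, but the commutation $\nabla_H T_t g=e^{-t}T_t\nabla_H g$ is stated only for $g\in\mathbb D^{1,1}(X,\gamma)$, i.e., it presupposes for $\mathbb{E}_m f$ the very membership you are trying to establish (one could instead differentiate Mehler's formula directly, but that is a separate argument you have not carried out). For $p=1$ you explicitly retreat to the bare $\mathbb{E}_m$-approximation, where the gap is untouched. The identification of $\nabla_H\mathbb{E}_m f$ with $(\mathbb{E}_m(\partial_{h_1}f),\ldots,\mathbb{E}_m(\partial_{h_m}f))$, which you flag as the main obstacle, is indeed needed (the paper states it with little justification), but it is not the missing piece: the missing piece is the approximation of $g_m$ by smooth functions in finite dimensions.
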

\begin{proof}
If $f\in \fcon_b^1(X)$, then $f$ is an element of $W^{1,p}(X,\gamma)$; hence, by the definition of
 $\mathbb D^{1,p}(X,\gamma)$ and the fact that $W^{1,p}(X,\gamma)$ is complete, we have $\mathbb D^{1,p}(X,\gamma)\subseteq W^{1,p}(X,\gamma)$.
 
\noindent 
To prove the converse inclusion, we consider $f\in\mathbb{D}^{1,p}(X,\gamma)$ and we build a sequence
in $\fcon_{b}^{1}(X)$ which converges to $f$. For $n\in\N$ we define
$f_{n}(x)=\mathbb{E}_{n}f(x)$. We consider 
$\gamma_{n}=\gamma\circ \pi_n ^{-1}$, Borel measure on $\R^n$: clearly there exists $g_{n}\in W^{1,p}(\R^{n},\gamma_{n})$
such that $f_{n}(x)=g_{n}(\widehat{h_{1}}(x),\ldots,\widehat{h_{n}}(x))$
with $\nabla g=(\mathbb{E}_{n}(\partial_{h_{1}}f),\ldots,\mathbb{E}_{n}(\partial_{h_{n}}f))$.
Each $g_{n}$ can be approximated by a sequence $g_{n,m}\in C_{b}^{1}(\R^{n})$.
So we can find a sequence $\{f_{k}\}_{k\in\N}$ in $\fcon_{b}^{1}(X)$
where $f_{k}(x)=g_{n_{k},m_{k}}(\widehat{h_{1}}(x),\ldots,\widehat{h_{n_k}}(x))$,
and $f_{k}$ converges to $f$ in $\mathbb{D}^{1,p}(X,\gamma)$.
\end{proof}

We close this section with the following remark.

\begin{rmk}\label{lugaresi}
If $f\in W^{1,1}(O,\gamma)$ and $g\in {\rm Lip}_c(O)$,
then the integration by parts formula holds
\[
\int_O f \partial^*_h g d\gamma=
-\int_O \partial_h f gd\gamma,
\qquad \forall h\in H.
\]
\end{rmk}

\subsection{The Orlicz spaces}\label{Orlicz}
{We use the concepts of Orlicz space (see \cite{RaoRen91}); in particular we recall two particular examples of Orlicz spaces,
$ L({\rm log}L)^{1/2}(O,\gamma)$ and $ L^{\Psi}(O,\gamma)$.}
Let $O\subseteq X$ be an open set. We introduce the spaces $L({\rm log}L)^{1/2}(O,\gamma)$ and $L^{\Psi}(O,\gamma)$ as follows: let
\begin{align*}
A_{1/2}(x):=\int_0^x(\ln(1+t))^{1/2}dt, \quad x\geq 0,
\end{align*}
and let $\Psi$ be its complementary function, namely,
\begin{align*}
\Psi(y):=\int_0^y(A_{1/2}'(t))^{-1}dt=\int_0^y\left(\exp\left(t^2\right)-1\right)dt, \quad y\geq 0.
\end{align*}
We define the spaces
\begin{align*}
L({\rm log}L)^{1/2}(O,\gamma) & :=\{f\in L^1(O,\gamma):A_{1/2}(|f|)\in L^1(O,\gamma)\}, \\
L^{\Psi}(O,\gamma) & :=\{g\in L^1(O,\gamma):{\textrm{there exists $c>0$ such that }}\Psi(c|g|)\in L^1(O,\gamma)\}.\notag
\end{align*}
We stress that, with the notations of \cite{RaoRen91}, $L({\rm log}L)^{1/2}(O,\gamma)=\tilde {\mathcal L}_{\Phi}(\gamma)$}. 
Since the function $\Phi(t):=(\log(1+t))^{1/2}\in\Delta_2$ 
(see \cite[Definition 1, Section 2.3]{RaoRen91}), from \cite[Theorem 2(ii), Section 3.1]{RaoRen91} it follows that $L({\rm log}L)^{1/2}(O,\gamma)$ is 
a vector space. Following the notations of \cite[Section 3.2]{RaoRen91} we consider the Luxemburg norm
\begin{align*}
N_{\Phi}(f)=\|f\|_{L({\rm log}L)^{1/2}(O,\gamma)}
 & :=\inf\left\{t>0:\int_OA_{1/2}(|f|/t)d\gamma\leq 1\right\}
 \end{align*}
and the space
\begin{align*}
 \tilde L_{\Phi}(\gamma):=(L({\rm log}L)^{1/2}(O,\gamma),\|\cdot\|_{L({\rm log}L)^{1/2}(O,\gamma)})
 =(\tilde{\mathcal L}_{\Phi}(\gamma), N_{\Phi}).
\end{align*} 
Further,
\cite[Corollary 4, Section 3.4]{RaoRen91}
gives $\tilde L_{\Phi}(\gamma)$ is equivalent to the spaces $M_{\Phi}$ and $L_{\Phi}(\gamma)$,
defined in \cite[Definition 2, Section 3.4]{RaoRen91} and \cite[Definition 5, Section 3.1]{RaoRen91}. Hence, \cite[Theorem 10, Section 3.3]{RaoRen91} implies that $L({\rm log}L)^{1/2}(O,\gamma)$ is a Banach space. 
Moreover, similar arguments give that $L^{\Psi}$ is a Banach space endowed with the norm
\begin{align*}
 \|g\|_{L^{\Psi}(O,\gamma)}
 &:=\inf\left\{t>0:\int_O{\Psi}(|g|/t)d\gamma\leq 1\right\}.
\end{align*}
\begin{rmk}
We notice that if two measurable functions 
$g_1,g_2:O\to \R$ have the same image measure, then
\[
\|g_1\|_{L^{\Psi}(O,\gamma)}=
\|g_2\|_{L^{\Psi}(O,\gamma)}.
\]
This simply follows by the identity
\[
\int_O{\Psi}(|g_1|/t)d\gamma
=
\int_\R{\Psi}(|s|/t)(\gamma\circ g_1^{-1})(ds)
=
\int_O{\Psi}(|g_2|/t)d\gamma.
\]
\end{rmk}
We conclude this part with two important results on Orlicz spaces. The former, which is \cite[Proposition 1, Section 3.3]{RaoRen91} and the Remark below therein, 
is a sort of H\"older inequality for complementary Orlicz spaces. The latter is a dominated convergence theorem in Orlicz spaces, and it is \cite[Theorem 14, Section 3.4]{RaoRen91} rewritten in our situation and using our notations.
\begin{pro}\label{prenotazione}
If $f\in L({\rm log}L)^{1/2}(O,\gamma)$ and $g\in L^{\Psi}(O,\gamma)$ then $fg\in L^1(O,\gamma)$ and
\begin{align*}
\|fg\|_{L^1(O,\gamma)}\leq 2 \|f\|_{L({\rm log}L)^{1/2}(O,\gamma)} \|g\|_{L^{\Psi}(O,\gamma)}.
\end{align*}
\end{pro}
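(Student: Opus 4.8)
This is the Orlicz-space H\"older inequality for the complementary Young pair $(A_{1/2},\Psi)$, so the natural strategy is to combine Young's inequality for complementary Young functions with the normalization property of the Luxemburg norm. The plan is to first dispose of the degenerate cases: if $\|f\|_{L({\rm log}L)^{1/2}(O,\gamma)}=0$ or $\|g\|_{L^{\Psi}(O,\gamma)}=0$ then the corresponding function vanishes $\gamma$-a.e.\ and the inequality is trivial, so I may assume both quantities are strictly positive and finite and abbreviate $\alpha:=\|f\|_{L({\rm log}L)^{1/2}(O,\gamma)}$ and $\beta:=\|g\|_{L^{\Psi}(O,\gamma)}$.

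The key preliminary step is the normalization estimate $\int_O A_{1/2}(|f|/\alpha)\,d\gamma\leq 1$ together with $\int_O\Psi(|g|/\beta)\,d\gamma\leq 1$. For every $t>\alpha$ the definition of the Luxemburg norm gives $\int_O A_{1/2}(|f|/t)\,d\gamma\leq 1$; since $A_{1/2}$ is a continuous Young function and the integrands increase as $t$ decreases, letting $t\downarrow\alpha$ and applying monotone convergence yields the first bound, and the same argument applied to $\Psi$ and $g$ yields the second. With these in hand I would invoke Young's inequality for the complementary pair: by construction $\Psi(y)=\int_0^y(A_{1/2}')^{-1}(t)\,dt$, so the classical area-comparison argument gives the pointwise bound $xy\leq A_{1/2}(x)+\Psi(y)$ valid for all $x,y\geq0$. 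Applying this with $x=|f|/\alpha$ and $y=|g|/\beta$ and integrating over $O$ against $\gamma$ produces
\[
\frac{1}{\alpha\beta}\int_O|fg|\,d\gamma\leq\int_O A_{1/2}(|f|/\alpha)\,d\gamma+\int_O\Psi(|g|/\beta)\,d\gamma\leq 2,
\]
which rearranges to the asserted inequality and simultaneously shows $fg\in L^1(O,\gamma)$.

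The main subtlety lies in the normalization estimate rather than in Young's inequality itself: for Luxemburg norms the bound $\int_O A_{1/2}(|f|/\|f\|)\,d\gamma\leq1$ need not hold in full generality, and one relies here on the continuity of $A_{1/2}$ (equivalently the $\Delta_2$ property of $\Phi(t)=(\log(1+t))^{1/2}$ already recorded in the text) to justify passing to the limit $t\downarrow\alpha$. I would also note that the factor $2$, rather than $1$, is an artifact of measuring both factors in the Luxemburg norm; pairing a Luxemburg norm with the corresponding Orlicz (Amemiya) norm would remove it, but the weaker constant stated here is all that is needed in the sequel.
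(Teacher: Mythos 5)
Your proof is correct. The paper does not actually prove this proposition --- it is stated as a quotation of \cite[Proposition 1, Section 3.3]{RaoRen91} and the remark following it --- and your argument (the pointwise Young inequality $xy\leq A_{1/2}(x)+\Psi(y)$ for the complementary pair, combined with the Luxemburg normalization bounds $\int_O A_{1/2}(|f|/\alpha)\,d\gamma\leq 1$ and $\int_O\Psi(|g|/\beta)\,d\gamma\leq 1$ obtained by letting $t\downarrow\alpha$, resp.\ $t\downarrow\beta$, via monotone convergence) is exactly the standard proof of that cited result, so it stands as a valid self-contained replacement for the citation.
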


\begin{thm}\label{dottorato}
Let $\varphi\in L({\rm log}L)^{1/2}(O,\gamma)$ and let $(\varphi_n)_{n\in\N}$ be a sequence of measurable functions such that $\varphi_n(x)\rightarrow \tilde \varphi(x)$ and $|\varphi_n(x)|\leq |\varphi(x)|$ for $\gamma$-a.e. $x\in O$. Then, $\varphi_n,\tilde \varphi\in L({\rm Log}L)^{1/2}(O,\gamma)$ and
\begin{align*}
\|\varphi_n-\tilde \varphi\|_{L({\rm log}L)^{1/2}(O,\gamma)}\rightarrow 0, \quad n\rightarrow+\infty.
\end{align*}
\end{thm}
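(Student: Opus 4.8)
The plan is to decouple the statement into two independent pieces: a classical Lebesgue dominated convergence estimate for the \emph{modular} $f\mapsto\int_O A_{1/2}(|f|)\,d\gamma$, and a transfer of modular convergence to Luxemburg-norm convergence that is powered by the $\Delta_2$ property of $\Phi$ already recorded before the statement. The membership assertions come for free. Since $A_{1/2}$ is nondecreasing with $A_{1/2}(0)=0$, the hypothesis $|\varphi_n|\le|\varphi|$ gives $A_{1/2}(|\varphi_n|)\le A_{1/2}(|\varphi|)\in L^1(O,\gamma)$, so $\varphi_n\in L({\rm log}L)^{1/2}(O,\gamma)$; passing to the limit in $|\varphi_n|\le|\varphi|$ yields $|\tilde\varphi|\le|\varphi|$ a.e.\ and the same monotonicity argument gives $\tilde\varphi\in L({\rm log}L)^{1/2}(O,\gamma)$.

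Next I would prove modular convergence, namely $\int_O A_{1/2}(|\varphi_n-\tilde\varphi|)\,d\gamma\to0$. From $\varphi_n\to\tilde\varphi$ a.e.\ together with the continuity of $A_{1/2}$ and $A_{1/2}(0)=0$, the integrands tend to $0$ a.e. For the domination, the triangle inequality gives $|\varphi_n-\tilde\varphi|\le 2|\varphi|$, hence $A_{1/2}(|\varphi_n-\tilde\varphi|)\le A_{1/2}(2|\varphi|)$. The crucial point is that this majorant is integrable: because $\Phi\in\Delta_2$, the primitive $A_{1/2}$ inherits a doubling bound of the form $A_{1/2}(2x)\le K A_{1/2}(x)$ for $x$ large, which on the probability space $(O,\gamma)$ (where $A_{1/2}$ is bounded on bounded sets) upgrades to $A_{1/2}(2|\varphi|)\le K A_{1/2}(|\varphi|)+c\in L^1(O,\gamma)$. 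The classical dominated convergence theorem then delivers $\int_O A_{1/2}(|\varphi_n-\tilde\varphi|)\,d\gamma\to0$.

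Finally I would convert modular convergence into norm convergence, which is precisely the service the $\Delta_2$ condition is designed to provide and which constitutes the restatement of \cite[Theorem 14, Section 3.4]{RaoRen91}. For $\lambda\ge1$, monotonicity already gives $\int_O A_{1/2}(|\varphi_n-\tilde\varphi|/\lambda)\,d\gamma\le\int_O A_{1/2}(|\varphi_n-\tilde\varphi|)\,d\gamma$, which is $\le1$ eventually; for $0<\lambda<1$ one iterates the doubling inequality a fixed number of times (depending only on $\lambda$) to bound $\int_O A_{1/2}(|\varphi_n-\tilde\varphi|/\lambda)\,d\gamma$ by a constant multiple of the modular plus an error supported where $|\varphi_n-\tilde\varphi|\le 1$. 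In either case the defining infimum in $N_\Phi$ is eventually $\le\lambda$, and the arbitrariness of $\lambda$ gives $\|\varphi_n-\tilde\varphi\|_{L({\rm log}L)^{1/2}(O,\gamma)}\to0$. I expect this last step to be the main obstacle: the $\Delta_2$ estimate is only available for large $x$, so making the passage fully rigorous requires separately controlling the contribution near $0$, which is exactly where finiteness of $\gamma$ and boundedness of $A_{1/2}$ on bounded sets enter, and where deferring to \cite[Theorem 14, Section 3.4]{RaoRen91} does the bookkeeping cleanly.
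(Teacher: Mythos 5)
Your proof is correct. Note that the paper does not actually prove Theorem \ref{dottorato}: it presents it as a restatement of \cite[Theorem 14, Section 3.4]{RaoRen91} in the notation of Subsection \ref{Orlicz}, so there is no internal argument to compare against. What you have written is a sound, self-contained reconstruction of that cited result for the specific Young function $A_{1/2}$: domination of the modular via $|\varphi_n-\tilde\varphi|\le 2|\varphi|$ and monotonicity of $A_{1/2}$, the classical dominated convergence theorem for the modular, and the $\Delta_2$ doubling to convert modular convergence into Luxemburg-norm convergence. The only place where you are more cautious than necessary is the final step: for this particular function the doubling inequality holds \emph{globally}, since $1+2s\le(1+s)^2$ for all $s\ge 0$ gives
\begin{align*}
A_{1/2}(2x)=2\int_0^x\bigl(\ln(1+2s)\bigr)^{1/2}ds\le 2\sqrt{2}\int_0^x\bigl(\ln(1+s)\bigr)^{1/2}ds=2\sqrt{2}\,A_{1/2}(x),\qquad x\ge 0,
\end{align*}
so iterating this $k$ times with $2^k\ge 1/\lambda$ bounds $\int_O A_{1/2}(|\varphi_n-\tilde\varphi|/\lambda)\,d\gamma$ by $(2\sqrt2)^k$ times the modular with no separate treatment of the region near $0$ and no appeal to finiteness of $\gamma$ at that stage. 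With that simplification your argument closes completely and the anticipated ``main obstacle'' evaporates.
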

Finally, thanks to \eqref{palo}, it is easy to see that the function $x\mapsto |\ell (x)|\in L^{\Psi}(O,\gamma)$, 
and therefore the integral
$\int_{O}f \ell d\gamma$
is well  defined for any $f\in L({\rm log}L)^{1/2}(O,\gamma)$ and $\ell\in \mathscr{H}$.

\subsection{\texorpdfstring{$BV$}{} functions on open domains in infinite dimension}
We begin this subsection by providing the definition of function of bounded variation on an open domain of $X$.
\begin{defn}\label{polenta}
Let $O\subseteq X$ be an open set. We say that $f\in L({\rm log}L)^{1/2}(O,\gamma)$ is a function of bounded variation in $O$, 
and we write $f\in BV(O,\gamma)$, if there exists an $H$-valued measure $\mu_f\in \mathscr M(O,H)$ such that
\begin{align}
\label{pioggia}
\int_O\partial_h^*Gfd\gamma=-\int_OGd[\mu_f,h] _H,
\end{align}
for any $G\in {\rm Lip}_c(O)$ and any $h\in H$. We write $D_\gamma^Of:=\mu_f$ , we call it weak gradient of $f$ and we set $D_\gamma^{O,h}f:=[\mu_f,h]_H$. {As a consequence of Corollary \ref{spaghetto}, if the weak gradient of $f$ there exists then it is unique}. Finally, if $E\in\mathcal B(O)$, $u=\chi_E$ and $u\in BV(O,\gamma)$, then we say that $E$ has finite perimeter in $O$.
\end{defn}

\begin{rmk}
We point out that the requirement on the validity
of \eqref{pioggia} can be equivalently required only
for $h\in QX^*$. Indeed if \eqref{pioggia} holds
for any $h\in QX^*$, we can pass to any $h\in H$
by considering a sequence $(h_j)_{j\in \N}\subseteq
QX^*$ converging to $h$ in $H$. Then 
if $\sigma_j=|h_j-h|_H$ and $k\in H$ is such that
$|k|_H=1$, the functions $\hat{h}_j-\hat{h}$ and
$\sigma_j \hat{k}$ have the same image measure (see e.g \cite[Lemma 2.2.8]{Bog98}). 
By \eqref{palo} $\hat{h}_j-\hat{h}\in L^\Psi (O,\gamma)$ 
and
\[
\| \hat{h}_j-\hat{h}\|_{L^\Psi (O,\gamma)}
=\sigma_j \|\hat{k}\|_{L^\Psi (O,\gamma)}
\]
so $\hat{h}_j\to \hat{h}$ in $L^\Psi (O,\gamma)$.
Thanks to Proposition \ref{prenotazione} we can then
pass to the limit in \eqref{pioggia}.
\end{rmk}

In the next Lemma we state that the Definition \ref{polenta} is equivalent to \cite[Def. 3.1]{AmbMirManPal10}.
\begin{lemma}
\label{tavolo}
If $O=X$, then in Definition \ref{polenta} the space ${\rm Lip}_{\textcolor{red}{c}}(X)$  can be replaced by $\fcon_b^1(X)$. 
\end{lemma}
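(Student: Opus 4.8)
The plan is to establish the equivalence by proving the two inclusions separately, keeping the candidate measure $\mu_f$ fixed: the task is only to transfer the validity of the integration by parts identity \eqref{pioggia} from one class of test functions to the other, uniqueness of $\mu_f$ being guaranteed by Corollary \ref{spaghetto}.

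To show that \eqref{pioggia} for every $G\in{\rm Lip}_c(X)$ forces it for every $G\in\fcon_b^1(X)$, I would argue by truncation. A function $G\in\fcon_b^1(X)$ is bounded, Lipschitz in the $X$-norm, and has bounded $H$-gradient, so fixing a cut-off $\theta_R\in{\rm Lip}_b(X)$ with $\theta_R\equiv1$ on $B(R)$, $\theta_R\equiv0$ off $B(2R)$ and ${\rm Lip}(\theta_R)\le C/R$ gives $G\theta_R\in{\rm Lip}_c(X)$. Applying \eqref{pioggia} to $G\theta_R$ and using $\partial_h^*(G\theta_R)=\theta_R\,\partial_h^*G+G\,\partial_h\theta_R$, I would let $R\to+\infty$. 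The cross term $\int_X G\,\partial_h\theta_R\,f\,d\gamma$ is supported in the annulus $\{R\le\|x\|_X\le2R\}$ and is bounded by $\|G\|_\infty\,(C|h|_H/R)\int_{\{R\le\|x\|_X\le2R\}}|f|\,d\gamma$, which vanishes because the $X$-norm is $H$-Lipschitz (with constant controlled by the embedding $H\hookrightarrow X$, whence $|\partial_h\theta_R|\le C|h|_H/R$) and $f\in L^1$. The remaining two terms converge by dominated convergence, the decisive feature being that $\theta_R\to1$ \emph{everywhere}, so that no absolute continuity of the (possibly $\gamma$-singular) measure $[\mu_f,h]_H$ is required.

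For the converse, assuming \eqref{pioggia} for every $G\in\fcon_b^1(X)$ and deducing it for $G\in{\rm Lip}_c(X)$, I would smooth $f$ rather than $G$. Setting $f_t:=T_tf$, the sharp regularizing estimate for the Ornstein--Uhlenbeck semigroup on $L({\rm log}L)^{1/2}(X,\gamma)$ (precisely the reason this Orlicz space appears, see \cite{AmbMirManPal10}) yields $f_t\in\mathbb D^{1,1}(X,\gamma)=W^{1,1}(X,\gamma)$ by Lemma \ref{foglio}. For $G\in{\rm Lip}_c(X)$, Remark \ref{lugaresi} then provides the genuine integration by parts
\[
\int_X f_t\,\partial_h^*G\,d\gamma=-\int_X G\,[\nabla_HT_tf,h]_H\,d\gamma,
\]
and I would pass to the limit $t\downarrow0$. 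On the left, $f_t\to f$ in $L({\rm log}L)^{1/2}(X,\gamma)$ (by strong continuity of $T_t$, which follows from the Jensen contraction property together with the Orlicz dominated convergence Theorem \ref{dottorato}), while $\partial_h^*G=\partial_hG-\hat h\,G\in L^{\Psi}(X,\gamma)$ by \eqref{palo}; hence the left-hand side converges by the H\"older inequality of Proposition \ref{prenotazione}.

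The main obstacle is the right-hand side, i.e. showing that the gradient measures $[\nabla_HT_tf,h]_H\,\gamma$ converge to $[\mu_f,h]_H$ when tested against $G\in{\rm Lip}_c(X)\subseteq C_b(X)$. Against cylindrical $\Phi\in\fcon_b^1(X,H)$ this is immediate from the hypothesis \eqref{pioggia} together with $f_t\to f$, but our $G$ is genuinely $X$-dependent and cannot be approximated uniformly by cylindrical functions; since $\mu_f$ may be singular with respect to $\gamma$, approximating $G$ by conditional expectations (which converge only $\gamma$-a.e.) is insufficient. I expect this to be the crux, and I would resolve it through the absence of loss of mass: by \cite[Theorem 4.1]{AmbMirManPal10} one has $\int_X|\nabla_HT_tf|_H\,d\gamma\to|\mu_f|(X)$, and weak convergence on the determining class $\fcon_b^1(X,H)$ together with convergence of the total masses upgrades to narrow convergence $\nabla_HT_tf\,\gamma\rightharpoonup\mu_f$ against $C_b(X,H)$. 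Testing with $\Phi=G\,h$ then identifies the limit of the right-hand side, and combining the two limits yields \eqref{pioggia} for $G\in{\rm Lip}_c(X)$, completing the equivalence.
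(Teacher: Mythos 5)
Your first direction (from ${\rm Lip}_c(X)$ to $\fcon_b^1(X)$) is essentially the paper's argument: both truncate $G\in\fcon_b^1(X)$ by Lipschitz cut-offs $\theta_n$ supported in balls and pass to the limit using dominated convergence for $\int f\partial_hG_n\,d\gamma$ and for $\int G_n\,d[\mu_f,h]_H$ (where, as you stress, the everywhere convergence $\theta_n\to 1$ is what makes the possibly $\gamma$-singular measure harmless), and Theorem \ref{dottorato} together with Proposition \ref{prenotazione} for the term involving $\hat h$. The second direction is where you genuinely diverge. The paper keeps $f$ fixed and approximates $G\in{\rm Lip}_c(X)$ by cylindrical $C^1_b$ functions, built from the conditional expectations $\mathbb E_nG$ followed by finite-dimensional mollification; this yields convergence in $\mathbb D^{1,2}(X,\gamma)$, hence $\gamma$-a.e. convergence with uniform bounds, and then "argues as above". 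You instead keep $G$ fixed and regularize $f$ by the Ornstein--Uhlenbeck semigroup, using $T_tf\in\mathbb D^{1,1}(X,\gamma)=W^{1,1}(X,\gamma)$ (Lemma \ref{foglio}), Remark \ref{lugaresi} for the exact integration by parts, and the strict convergence of the gradient measures $\nabla_HT_tf\,\gamma$ to $\mu_f$ coming from \cite[Theorems 4.1 and 4.4]{AmbMirManPal10}. What each buys: the paper's route is more elementary and self-contained, but it is exactly at the point you flag that it is delicate --- the approximants converge only $\gamma$-a.e., while $[\mu_f,h]_H$ may be singular with respect to $\gamma$, so passing to the limit in $\int G_n\,d[\mu_f,h]_H$ needs an extra ingredient (e.g. quasi-everywhere convergence of $\mathbb E_nG$ and the fact that derivative measures of $BV$ functions do not charge the exceptional set); your route sidesteps this entirely at the price of importing the full strength of the semigroup characterization of \cite{AmbMirManPal10}.

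One step of yours deserves more care than the proposal gives it: the upgrade from ``weak convergence against $\fcon_b^1(X,H)$ plus convergence of total masses'' to narrow convergence against $C_b(X,H)$. In infinite dimensions, convergence against cylindrical test functions does not by itself determine narrow limits without tightness, and the lower semicontinuity of $|\mu|$ on open sets (Lemma \ref{banana}) is phrased via ${\rm Lip}_c$ test fields --- precisely the class you are trying to reach --- so a naive Portmanteau argument risks circularity. The clean way to close this is to quote directly the weak-$*$ and total-variation convergence of $\partial_hT_tf\,\gamma$ to $D^{X,h}_\gamma f$ as stated in \cite[Theorem 4.4]{AmbMirManPal10} (this is exactly the package the paper itself invokes in Lemma \ref{esami} together with \cite[Theorem 8.2.3]{Bog06}), after identifying $\mu_f$ with $D_\gamma^Xf$ via the cylindrical identity and a monotone class argument. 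With that citation in place your argument is complete.
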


\begin{rmk}
We stress that there is no inclusion between ${\rm Lip}_c(X)$ and $\fcon_b^1(X)$. Indeed, functions in $\fcon_b^1(X)$ should be smoother than those in ${\rm Lip}_c(X)$. Further, we have no condition on the support of functions in $\fcon_b^1(X)$, and so $\fcon _b^1(X)\not\subseteq {\rm Lip}_c(X)$.
\end{rmk}

\begin{proof} [Proof of Lemma \ref{tavolo}]
Let $f\in L({\rm log}L)^{1/2}(X,\gamma)$ satisfy \eqref{pioggia} for every $\tilde G\in {\rm Lip}_c(X)$, and let $G\in \fcon_b^1(X)$. 
Clearly, $G$ can be approximated by a sequence $(G_n)_{n\in\N}$ of functions in ${\rm Lip}_c(X)$ such that $G_n\rightarrow G$ and 
$\nabla_H G_n\rightarrow \nabla_H G$ $\gamma$-a.e. in $X$ and $G_n$, $\nabla_H G_n$ are uniformly bounded: 
it suffices to consider a sequence $(\theta_n)$ of Lipschitz functions such that $\theta_{n|B(n)} \equiv 1$ and $\theta_{n|X\backslash B(n+1)} \equiv 0$ for every $n\in\N$, and define $G_n =\theta_n G$. For any $h\in QX^*$ we have
\begin{align}
\label{fulmini}
\int_X f\partial_h^*G_n\ d\gamma=-\int_X G_n\ d[ \mu_f,h]_H,
\end{align}
and by the dominated convergence theorem the right-hand side of \eqref{fulmini} converges to 
\[
-\int_XG \ d[ \mu_f,h]_H,\qquad
\mbox{as } n\rightarrow +\infty. 
\]
From the definition of $\partial_h^*$, we can split the left-hand side of \eqref{fulmini} as
\begin{align*}
\int_Xf\partial_h^*G_nd\gamma=\int_Xf\partial_hG_nd\gamma-\int_XfG_n\hat hd\gamma=:I_1^n+I_2^n.
\end{align*}
Again, by the dominated convergence theorem we infer that $I_1^n\rightarrow \int_Xf\partial_hGd\gamma$ as $n\rightarrow+\infty$. 
As far as $I_2^n$ is concerned, we can apply Theorem \ref{dottorato}, with $\varphi:=f \|G\|_\infty$, $\tilde\varphi:=fG$ and $\varphi_n:=fG_n$ 
for any $n\in\N$, so, 
$f(G_n-G)$ goes to $0$ in $L({\rm log}L)^{1/2}(X,\gamma)$. Hence, by Proposition \ref{prenotazione} and by $\hat h \in L^{\boldsymbol \Psi}(X,\gamma)$
we have $I_2^n\rightarrow \int_XfG\hat hd\gamma$, and therefore (\ref{pioggia}) holds true for any $G\in\fcon_b^1(X)$.
\vspace{2mm}

To prove the converse implication, let $f\in L({\rm log}L)^{1/2}(X,\gamma)$, and assume that (\ref{pioggia}) is satisfied for every $\tilde{G}\in \fcon _b^1(X)$.
We claim that every $G\in {\rm Lip}_c(X)$ can be approximated by a sequence $G_n$ of functions in $\fcon_b^1(X)$ such that $G_n\rightarrow G$ and
$\nabla_H G_n\rightarrow \nabla_H G$ $\gamma$-a.e. in $X$ and $G_n$, $\nabla_H G_n$ are uniformly bounded. If the claim is true, we can argue as above to conclude. Hence, it remains to prove the claim. 

Let $G\in {\rm Lip}_c(X)$, and for any $n\in\N$ let $\tilde G_n=\mathbb{E}_nG$. By \cite[Proposition 5.4.5]{Bog98}, $\tilde G_n$ converges 
to $G$ in $\mathbb D^{1,2}(X,\gamma)$ and 
$\tilde G_n$ and $\nabla_H \tilde G_n$  are uniformly bounded. Moreover, $ \tilde G_n$ is a cylindrical function and 
therefore there exists $v_n\in {\rm Lip}_b(\R^n )$ such that $ \tilde G_n=v_n\circ \pi_n$, by identifying $H_n:={\rm span}\{h_1,\ldots,h_n\}$ with $\R^n$. For every $n\in \N$, $v_n$ can be approximated by a sequence $(v_{m,n})_{m\in\N}$ 
of convolutions of $v_n$ with  a sequence of standard mollifiers $\phi_m$ in $\R^n$, and we define $G_{m,n}:=v_{m,n}\circ \pi_n$. 
Easy computations reveal that $G_{m,n}\rightarrow G_n$ in $\mathbb D^{1,2}(X,\gamma)$ 
(see e.g.  \cite[Lemma 3.2]{HINO03}). From the definition, $G_{m,n}\in\fcon_b^1(X)$ and $G_{m,n}$ and $\nabla_H G_{m,n}$ are uniformly bounded 
with respect to $m,n\in\N$.
Now, with a diagonal argument, we find a sequence $G_n\in\fcon_b^1(X)$ which converges to $G$ in  $\mathbb D^{1,2}(X,\gamma)$ and such that 
$G_n$, $\nabla_H G_n$ are uniformly bounded. In particular, up to a subsequence, both $G_n$ and $\nabla_H G_n$ converge to 
$G$ and $\nabla_H G$ $\gamma$-a.e., respectively.
\end{proof}

\begin{rmk}
\label{equality}
{If $O_1 \subseteq O_2$  are  open sets and $f\in BV(O_2,\gamma)$ then $f_{|O_1}\in BV(O_1,\gamma)$.
If $f_1$, $f_2\in BV(X,\gamma)$  and  
$f_{1|O}=f_{2|O}$ then clearly 
\[
D_\gamma^O (f_{1|O}) 
=D_\gamma^O (f_{2|O})=D_\gamma^X f_1\mres O
=D_\gamma^X f_2\mres O.
\] 
We stress that Lemma \ref{tavolo} implies that our definition of $BV(X,\gamma)$ is 
coherent with that used in literature.}
\end{rmk}

\begin{rmk}
\label{pendolare}
As in  \cite[Section 3]{AmbMirManPal10}, we may slightly modify the requirement in Definition \ref{polenta}. Let $\{h_j:j\in\N\}\subseteq QX^*$ be an orthonormal basis of $H$ and 
let $\partial_j^*:=\partial_{h_j}^*$ for any $j\in\N$. We say that $f\in BV(O,\gamma)$ if there exists a family $\{\mu_j\}_{j\in\N}$ of real valued measures such that 
$\sigma:=\sup_{j\in\N}|(\mu_1,\ldots,\mu_j)|(O)<+\infty$ and
\begin{align*}
\int_O u\partial_j^* G\ d\gamma=-\int_OG\ d\mu_j, \quad G\in{\rm Lip}_c(O), \quad j\in\N.
\end{align*}
The measure $\mu:=\sum_{j\in\N}\mu_jh_j$ is well defined and belongs to $\mathscr{M}(O,H)$. 
It is enough to consider the density $f_j$ of $\mu_j$ with respect to $|\mu|$ for any $j\in\N$. Hence, $\sum_{j\in\N}f_j^2\leq 1$ for $\sigma$-a.e. and 
$\mu=\sum_{j\in\N}f_jh_j\sigma$. 
\end{rmk}

Clearly, the restriction of a Sobolev function in $X$ to $O$ is a function of bounded variation.

\begin{lemma}\label{macchiato}
Let $O$ be an open subset of $X$. If $f\in \mathbb D^{1,1}(X,\gamma)$ then $f_{|O}\in  BV(O,\gamma)$ and $D_\gamma^Of=\nabla_Hf \gamma\mres O$.
\end{lemma}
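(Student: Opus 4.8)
The plan is to exhibit the weak gradient explicitly and then reduce the integration by parts identity \eqref{pioggia} to the one already available on $O$. I would set $\mu_f:=\nabla_H f\,\gamma\mres O$, the $H$-valued measure with density $\nabla_H f$ with respect to $\gamma\mres O$, and first check that it qualifies as a candidate. Since $f\in\mathbb D^{1,1}(X,\gamma)$ we have $\nabla_H f\in L^1(X,\gamma;H)$, so its total variation is finite, with $|\mu_f|(O)=\int_O|\nabla_H f|_H\,d\gamma\le\|\nabla_H f\|_{L^1(X,\gamma;H)}<+\infty$; hence $\mu_f$ is a finite $H$-valued Radon measure and $\mu_f\in\mathscr M(O,H)$. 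The membership requirement of Definition \ref{polenta} is handled by the known embedding of $\mathbb D^{1,1}(X,\gamma)$ into $L({\rm log}L)^{1/2}(X,\gamma)$, which gives $f\in L({\rm log}L)^{1/2}(X,\gamma)$; since $\int_O A_{1/2}(|f|)\,d\gamma\le\int_X A_{1/2}(|f|)\,d\gamma<+\infty$, the restriction $f_{|O}$ lies in $L({\rm log}L)^{1/2}(O,\gamma)$.

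The core step is the integration by parts, and here the preparatory remarks do the work. I would pass from $\mathbb D^{1,1}$ to $W^{1,1}$ via Lemma \ref{foglio}, obtaining $f\in W^{1,1}(X,\gamma)$; Remark \ref{bottiglia} then yields $f_{|O}\in W^{1,1}(O,\gamma)$, with $\partial_h f_{|O}=[\nabla_H f,h]_H$ for every $h\in H$. At this point Remark \ref{lugaresi} applies directly: for any $G\in{\rm Lip}_c(O)$ and any $h\in H$,
\[
\int_O f\,\partial_h^* G\,d\gamma=-\int_O \partial_h f\,G\,d\gamma=-\int_O [\nabla_H f,h]_H\,G\,d\gamma.
\]
Recognizing the right-hand side as $-\int_O G\,d[\mu_f,h]_H$, where $[\mu_f,h]_H=[\nabla_H f,h]_H\,\gamma\mres O$, shows that $f_{|O}$ satisfies \eqref{pioggia} with the measure $\mu_f$. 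Uniqueness of the weak gradient, noted in Definition \ref{polenta} as a consequence of Corollary \ref{spaghetto}, then forces $D_\gamma^O f=\mu_f=\nabla_H f\,\gamma\mres O$, which is exactly the claimed identity.

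There is essentially no hard analytic obstacle, since the heavy lifting is delegated to Remark \ref{lugaresi}; the only points deserving care are bookkeeping ones. I would make sure the scalar measure $[\mu_f,h]_H$ is correctly identified with $[\nabla_H f,h]_H\,\gamma\mres O$, so that the $H$-pairing commutes with taking the density, and that the integral $\int_O f\,\partial_h^* G\,d\gamma$ is genuinely well defined: using $\partial_h^* G=\partial_h G-G\,\hat h$ with $\partial_h G$ bounded and $\hat h\in L^{\Psi}(O,\gamma)$ by \eqref{palo}, the H\"older-type estimate of Proposition \ref{prenotazione} together with $f_{|O}\in L({\rm log}L)^{1/2}(O,\gamma)$ guarantees integrability of each term. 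The mildest subtlety is the invocation of the embedding $\mathbb D^{1,1}(X,\gamma)\subseteq L({\rm log}L)^{1/2}(X,\gamma)$, which is the only ingredient not produced internally by the preceding remarks and which I would cite as a known fact.
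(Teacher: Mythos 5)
Your proposal is correct and follows essentially the same route as the paper: identify the candidate measure $\nabla_H f\,\gamma\mres O$, use the embedding $\mathbb D^{1,1}(X,\gamma)\hookrightarrow L({\rm log}L)^{1/2}(X,\gamma)$ for the membership, and verify \eqref{pioggia} by an integration-by-parts formula already prepared in the preliminaries. The only cosmetic difference is that the paper extends the integral to all of $X$ (using that $G\in{\rm Lip}_c(O)$ is supported in $O$) and invokes the global formula of Remark \ref{salama}, whereas you restrict $f$ to $O$ via Lemma \ref{foglio} and Remark \ref{bottiglia} and invoke Remark \ref{lugaresi} directly on $O$; both are valid.
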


\begin{proof}
From \cite[Proposition 3.2]{FuHi01} it is well known that $ \mathbb D^{1,1}(X,\gamma)$ is continuously embedded into $L({\rm log}L)^{1/2}(X,\gamma)$, 
and so $f\in L({\rm log}L)^{1/2}(O,\gamma)$.
Since ${\rm Lip}_c(O)\subseteq \mathbb D^{1,q}(X,\gamma)$ for any $q\in[1,+\infty]$,
applying Lemma \ref{salama} we have 
\begin{align*}
\int_Of\partial_h^*Gd\gamma
= \int_Xf\partial_h^*Gd\gamma=-\int_X\partial_h fGd\gamma
=-\int_O\partial_hfGd\gamma,
\end{align*}
for any $G\in {\rm Lip}_c(O)$ and any $h\in H$. In particular, for any $h\in H$ we have
\begin{align*}
[D_\gamma^Of,h]_H=\partial_hf \gamma\mres O=[\nabla_H f,h]_H\gamma\mres O,
\end{align*}
and equality $D_\gamma^Of=\nabla_Hf \gamma\mres O$
follows.
\end{proof}

If we consider the Ornstein-Uhlenbeck semigroup defined in \eqref{bassano}, then
\begin{align*}
\int_O (T_t\bar f)({\rm div}_\gamma G) d\gamma=e^{-t}\int_O f\ \!\left({\rm div}_\gamma(T_tG)\right)d\gamma,
\end{align*}
for any $f\in L({\rm log}L)^{1/2}(O,\gamma)$ 
and any $G\in{\rm Lip}_c(O,H)$ (where $T_tG$ is calculated componentwise). 
Indeed, it has been proved in \cite[Section 2.4]{AmbMirManPal10} that
\begin{align*}
\int_X (T_tu) ({\rm div}_\gamma \Phi) d\gamma=e^{-t}\int_X u\ \!\left({\rm div}_\gamma(T_t\Phi)\right)d\gamma,
\end{align*}
for any $u\in L({\rm log}L)^{1/2}(X,\gamma)$ and $\Phi\in \fcon_b^1(X,H)$ ($T_t \Phi$ again calculated componentwise), and by an approximation argument it also holds for any $\Phi\in {\rm Lip}_b(X,H)$. Therefore, if we consider $f\in L({\rm log}L)^{1/2}(O,\gamma)$ we have
\begin{align*}
\int_O (T_t\bar f)({\rm div}_\gamma G) d\gamma
= \int_X (T_t \bar f) ({\rm div}_\gamma G) d\gamma
=e^{-t}\int_X \bar f{\rm div}_\gamma(T_t  G)d\gamma
=e^{-t}\int_O f{\rm div}_\gamma(T_t G)d\gamma,
\end{align*}
for every $G\in{\rm Lip}_c(O,H)$.

The variation of integrable functions plays a crucial role in the setting of $BV$ functions. Indeed, both in finite dimension (see \cite[Definition 3.4 \& Proposition 3.6]{AmbFusPal00}) and in Wiener spaces, when the whole space is considered (see \cite[Definition 3.8 \& Theorem 4.1]{AmbMirManPal10}), it is possible to characterize functions of bounded variation by means of their variation. We introduce this concept also in our context.

\begin{defn}\label{variation}
For any open set $O\subseteq X$ and any $f\in L({\rm log}L)^{1/2}(O,\gamma)$, we define the variation of $f$ in $O$ by
\begin{align}
V_\gamma(f,O):= 
& \sup \bigg\{\int_Of {\rm div}_\gamma^FGd\gamma:
\ F\subseteq QX^* \mbox{ fin. dim.}, \ G\in{\rm Lip}_c(O,F) ,  
\ |G(x)|_F\leq 1 \ \forall x\in O\bigg\}, \label{vista}
\end{align}
where for $F={\rm span}\{k_1,\ldots,k_m\}$ for some $k_1,\ldots,k_m\in QX^*$ and
$G\in{\rm Lip}_c(O,F)$, we define ${\rm div}_\gamma^FG:=\sum_{i=1}^m\partial_{k_i}^* G_i(x)$ with $G(x):=\sum_{i=1}^mG_i(x)k_i$ and 
$|G(x)|_F^2:=\sum_{i=1}^m|G_i(x)|^2$.
Since $f\in L({\rm log}L)^{1/2}(O,\gamma)$, the integral term in \eqref{vista} is well defined for any $F\subseteq QX^*$ of finite dimension.

If $O=X$, we denote $V_\gamma(f,X)$ by $V_\gamma(f)$;
it is not hard to see that this definition coincides with \cite[Definition 3.8]{AmbMirManPal10}, by arguing as in Lemma \ref{tavolo}. 
\end{defn}

Under our assumptions, $G$ has bounded support and $F\subseteq QX^*$, hence ${\rm div}_\gamma^FG$ is bounded. Therefore, it follows that $V_\gamma(f,O)$ is lower semicontinuous with respect to the $L^1$ convergence of $f$.

For any $f\in L(\log L)^{1/2}(O,\gamma)$ we introduce the functional
\begin{align}
\label{fattoria}
L_\gamma(f,O):=\inf\left\{\liminf_{n\rightarrow+\infty}
\int_O |\nabla_Hf_n|_Hd\gamma:\ f_n\in W^{1,1}(O,\gamma), 
\ f_n \stackrel{L^1(O,\gamma)}\longrightarrow f \right\}.
\end{align}

The variation of a function $f$ along a subspace $F$ of $H$ generated by a finite number of elements of $QX^*$ deserves a particular attention. 
Let $h_i,\ldots,h_k\in QX^*$ be orthonormal elements
of $H$ and let $F={\rm span}\{h_{1},\ldots, h_{k}\}$. We define the variation of $f\in L({\rm log}L)^{1/2}(O,\gamma)$ 
in $O$ along $F$ by
\begin{align}
V_\gamma^{F}(f,O):=\sup\left\{\sum_{j=1}^k\int_Of\partial_{h_{j}}^* G_jd\gamma:G_j\in {\rm Lip}_c(O),\ j=1,\ldots,k, 
 \ \sum_{j=1}^k|G_j(x)|^2\leq 1 \ \forall x\in O\right\},
\label{chiapponi}
\end{align}
where $\partial_h^*\psi=\partial_h\psi-\psi \hat h$ for any smooth enough function $\psi$ and any $h\in QX^*$. 
If $O=X$ we denote $V_\gamma^F(f,X)$ by $V_\gamma^F(f)$.
Further, let $h\in  QX^*$. 
When $F={\rm span}\{h\}$, we denote $V_\gamma^F(f,O)$
by $V_\gamma^h(f,O)$.
As $V_\gamma(f,O)$, also $V_\gamma^{F}(f,O)$ and $V_\gamma^{h}(f,O)$ are lower semicontinuous respect to $f$ in the $L^1$ topology.

We also define the weak {gradient} of $f\in BV(O,\gamma)$ along $F$. Let $D_\gamma^Of=\sigma|D_\gamma^Of|$ be the polar decomposition of $D_\gamma ^Of$. 
We define the weak gradient of $f$ along $F$ by
\begin{align*}
 D_\gamma^{O,F} f:= \sum _{j=1}^k [ \sigma, h_{i_j}  ]_H h_{i_j} |D_\gamma^Of|.
\end{align*}

If $O=X$, we denote $V_\gamma^{h}(f,X)$ by 
$V^h_\gamma(f)$; 
it is not hard to see that this definition coincides with \cite[Definition 3.8]{AmbMirManPal10}, by arguing as in Lemma \ref{tavolo}.

\section{Equivalent characterizations of  \texorpdfstring{$BV$}{} on domains}
\label{main}

Let $O\subseteq X$ be an open set. The aim of this section is to prove that, analogously to the case $O=X$ (see \cite[Theorem 4.1]{AmbMirManPal10}) it is possible to characterize the space $BV(O,\gamma)$ in terms of \eqref{vista} and \eqref{fattoria}. To begin with, we state the main theorem of the paper. 
\begin{thm}
Let $f\in L(\log L)^{1/2}(O,\gamma)$. The following are equivalent:
\begin{enumerate}
\item $f\in BV(O,\gamma)$;
\item $V_\gamma(f,O)<+\infty$;
\item $L_\gamma(f,O)<+\infty$.
\end{enumerate}
Moreover, if one (and then all) of the previous holds true, then $|D_\gamma^Of|(O)=V_\gamma(f,O)=L_\gamma(f,O)$.
\label{lauree}
\end{thm}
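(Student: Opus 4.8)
The plan is to establish the cycle of implications $(1)\Rightarrow(2)\Rightarrow(3)\Rightarrow(1)$, tracking the quantities $|D_\gamma^Of|(O)$, $V_\gamma(f,O)$ and $L_\gamma(f,O)$ so that the final equality chain falls out of the chain of inequalities. The cleanest route is to prove $|D_\gamma^Of|(O)\ge V_\gamma(f,O)\ge L_\gamma(f,O)\ge |D_\gamma^Of|(O)$ (with finiteness propagating through each step), which simultaneously yields all three equivalences and the identity at the end.

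For $(1)\Rightarrow(2)$ I would start from $f\in BV(O,\gamma)$ with weak gradient $\mu_f=D_\gamma^Of$ and, for any fixed finite-dimensional $F=\mathrm{span}\{k_1,\dots,k_m\}\subseteq QX^*$ and any $G=\sum_i G_ik_i\in{\rm Lip}_c(O,F)$ with $|G|_F\le1$, expand $\int_O f\,{\rm div}_\gamma^F G\,d\gamma=\sum_i\int_O f\,\partial_{k_i}^*G_i\,d\gamma$ and apply the defining integration-by-parts identity \eqref{pioggia} componentwise to rewrite this as $-\sum_i\int_O G_i\,d[\mu_f,k_i]_H=-\int_O[G,\sigma]_H\,d|\mu_f|$, where $\mu_f=\sigma|\mu_f|$ is the polar decomposition. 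Since $|G|_F\le1$ and $|\sigma|_H=1$, this is bounded by $|\mu_f|(O)$, giving $V_\gamma(f,O)\le|D_\gamma^Of|(O)<+\infty$; the reverse bound $V_\gamma(f,O)\ge|D_\gamma^Of|(O)$ is exactly Corollary~\ref{spaghetto} applied to $\mu_f$ on $A=O$, so in fact these two are equal.

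For $(2)\Rightarrow(3)$, the natural construction is to mollify via the Ornstein--Uhlenbeck semigroup. Given $V_\gamma(f,O)<+\infty$, set $f_t:=T_t\bar f$ (restricted to $O$), which lies in $W^{1,1}(O,\gamma)$ by the smoothing properties recalled after \eqref{bassano} and converges to $f$ in $L^1(O,\gamma)$ as $t\downarrow0$. Using the commutation identity $\int_O(T_t\bar f)\,{\rm div}_\gamma G\,d\gamma=e^{-t}\int_O f\,{\rm div}_\gamma(T_tG)\,d\gamma$ established just before Definition~\ref{variation}, together with $\nabla_HT_t\bar f=e^{-t}T_t\nabla_H\bar f$ and the contractivity of $T_t$ on $L^1$, I would show $\int_O|\nabla_Hf_t|_H\,d\gamma$ is controlled by $V_\gamma(f,O)$; testing against $G=\nabla_Hf_t/|\nabla_Hf_t|_H$ (suitably truncated and cut off to land in ${\rm Lip}_c(O,F)$ after finite-dimensional projection) identifies $\int_O|\nabla_Hf_t|_H\,d\gamma$ with a competitor for the variation, yielding $L_\gamma(f,O)\le\liminf_{t\downarrow0}\int_O|\nabla_Hf_t|_H\,d\gamma\le V_\gamma(f,O)$.

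For $(3)\Rightarrow(1)$, take an approximating sequence $f_n\in W^{1,1}(O,\gamma)$ with $f_n\to f$ in $L^1$ and $\liminf_n\int_O|\nabla_Hf_n|_H\,d\gamma=L_\gamma(f,O)<+\infty$. By Lemma~\ref{macchiato} (via Remark~\ref{bottiglia}), each $f_n$ restricted to $O$ is $BV$ with gradient measure $\nabla_Hf_n\,\gamma\mres O$, whose total variations are uniformly bounded. The key is a weak-$*$ compactness argument for the $H$-valued measures $\nabla_Hf_n\,\gamma\mres O$: I would extract a limit measure $\mu\in\mathscr M(O,H)$ and pass to the limit in the integration-by-parts identity (Remark~\ref{lugaresi}) $\int_O f_n\,\partial_h^*G\,d\gamma=-\int_O\partial_hf_n\,G\,d\gamma=-\int_O G\,d[\nabla_Hf_n\gamma,h]_H$ for each $G\in{\rm Lip}_c(O)$ and $h\in H$, identifying $\mu$ as $D_\gamma^Of$ and hence $f\in BV(O,\gamma)$ with $|D_\gamma^Of|(O)\le L_\gamma(f,O)$ by lower semicontinuity of total variation. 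The main obstacle is precisely this compactness/representation step: because $X$ is not locally compact, one cannot invoke Riesz on $C_c(X)$, and the dual of $C_b$ is strictly larger than $\mathscr M$; I expect the paper to circumvent this exactly as its introduction promises, namely by applying \cite[Theorem 4.1]{AmbMirManPal10} to the null extension $\bar f$ on the whole space $X$ (where the representation is already available), obtaining $\bar f\in BV(X,\gamma)$ and then restricting the resulting global measure to $O$ via Remark~\ref{equality}, rather than building the limit measure by hand on $O$.
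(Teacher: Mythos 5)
Your step $(1)\Rightarrow(2)$ with the equality $V_\gamma(f,O)=|D_\gamma^Of|(O)$ is correct and is exactly the paper's Remark~\ref{implicazione_facile} combined with Corollary~\ref{spaghetto}. The two remaining arrows, however, both contain genuine gaps. For $(2)\Rightarrow(3)$, the global mollification $f_t=T_t\bar f$ cannot be controlled by $V_\gamma(f,O)$: the commutation identity turns $\int_O[\nabla_HT_t\bar f,G]_H\,d\gamma$ into $-e^{-t}\int_O f\,{\rm div}_\gamma(T_tG)\,d\gamma$, but $T_tG$ is \emph{not} supported away from $\partial O$ even when $G\in{\rm Lip}_c(O,F)$, so it is not an admissible competitor for $V_\gamma(f,O)$. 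The failure is not merely technical: near $\partial O$ the gradient of $T_t\bar f$ sees the jump of the null extension across the boundary, which $V_\gamma(f,O)$ does not control (take $f\equiv 1$ on a bounded interval $O\subseteq\R$, so $V_\gamma(f,O)=0$ while $\liminf_t\int_O|(T_t\chi_O)'|\,d\gamma>0$; with a bad boundary the latter can be $+\infty$). The paper is explicit that the Ornstein--Uhlenbeck criterion $\mathscr J(f,O)<+\infty$ is only \emph{sufficient} on general domains, so your claimed inequality $\liminf_t\int_O|\nabla_HT_t\bar f|_H\,d\gamma\le V_\gamma(f,O)$ is false. The paper's actual route to $(3)$ is a Meyers--Serrin gluing (Proposition~\ref{cornice}), run \emph{after} the measure $D_\gamma^Of$ has been built: exhaust $O$ by $O_i$, cover by the annuli $U_i=O_i\setminus\overline{O_{i-2}}$ with a Lipschitz partition of unity $\psi_i$, localize $f$ by cutoffs so that each $f\varphi_i$ lies in $BV(X,\gamma)$, mollify each piece by $T_{t_{\varepsilon,i}}$ at its own time scale, and use $\sum_i\nabla_H\psi_i\equiv 0$ to kill the cross terms; Lemma~\ref{esami} (with $|D_\gamma^Of|(\partial O_i)=0$ chosen via Remark~\ref{misura bordo}) then gives $\int_O|\nabla_Hf_\varepsilon|_H\,d\gamma\to|D_\gamma^Of|(O)$.

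For $(3)\Rightarrow(1)$ you correctly identify the compactness obstruction but do not resolve it, and the fix you anticipate --- applying \cite[Theorem 4.1]{AmbMirManPal10} to the null extension $\bar f$ --- does not work, because $\bar f$ need not belong to $BV(X,\gamma)$ when $f\in BV(O,\gamma)$ (again $f\equiv 1$ on a domain of infinite perimeter). The paper closes the cycle differently: it proves $(3)\Rightarrow(2)$ by the elementary limit $\int_Of\,{\rm div}_\gamma^FG\,d\gamma=\lim_n\bigl(-\int_O[\nabla_Hf_n,G]_H\,d\gamma\bigr)\le L_\gamma(f,O)$ using Remark~\ref{lugaresi}, and then the hard implication is $(2)\Rightarrow(1)$ (Theorem~\ref{cassettiera}): one applies the whole-space theorem not to $\bar f$ but to $g\bar f$ for Lipschitz cutoffs $g$ compactly supported in $O$ (the commutator term $f[G,\nabla_Hg]_H$ being controlled by $\|f\|_{L^1}$), defines $\nu_{A_n}:=D_\gamma^X(g\bar f)\mres A_n$ on an exhaustion $A_n\Subset O$, and shows $(\nu_{A_n})$ is Cauchy in the Banach space $\mathscr M(O,H)$ because $|\nu_{A_n}|(O)$ is increasing and bounded by $V_\gamma(f,O)$. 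This localization-and-exhaustion construction is the essential new idea that your proposal is missing; so the logical order of the paper is $(2)\Rightarrow(1)$, $(1)\Rightarrow(3)$, $(3)\Rightarrow(2)$, with the quantitative chain $|D_\gamma^Of|(O)=V_\gamma(f,O)$, $L_\gamma(f,O)\le|D_\gamma^Of|(O)$ and $V_\gamma(f,O)\le L_\gamma(f,O)$.
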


Since the proof is rather long, for reader's convenience we split it into two different subsections. In the former we prove implication $(2)\Rightarrow (1)$, in the latter we show that $(1)\Rightarrow (3)\Rightarrow (2)$.

\subsection{ \texorpdfstring{$ (2)\Rightarrow (1)$}{}}
In this subsection we show that, if $f\in L({\rm log}L)^{1/2}(O,\gamma)$ has finite variation, then $f\in BV(O,\gamma)$ and $|D_\gamma^Of|(O)=V_f(f,O)$. 

The following result is a part of \cite[Theorem 4.1]{AmbMirManPal10}.
\begin{pro}\label{noteemilia}
Let $u\in L({\rm log}L)^{1/2}(X,\gamma)$. Then, the following are equivalent:
\begin{itemize}
\item[$(i)$] there exists $\mu\in\mathscr M(X,H)$ such that
\begin{align*}
\int_{X}u\partial_h^* G d\gamma=-\int_X G[\mu,h]_H, \qquad \forall G\in{\rm Lip}_c(X).
\end{align*}
\item[$(ii)$] $V_\gamma(u)<+\infty$.
\end{itemize}
\end{pro}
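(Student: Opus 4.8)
The plan is to establish the two implications separately, recording along the way the quantitative identity $V_\gamma(u)=|\mu|(X)$.

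For $(i)\Rightarrow(ii)$ I would argue directly from the definition of the variation. Writing the polar decomposition $\mu=\sigma|\mu|$ with $|\sigma|_H=1$ $|\mu|$-a.e., I take a finite-dimensional $F={\rm span}\{k_1,\dots,k_m\}\subseteq QX^*$ and a field $G=\sum_{i=1}^m G_i k_i\in{\rm Lip}_c(X,F)$ with $|G|_F\le1$. Expanding ${\rm div}_\gamma^F G=\sum_i\partial_{k_i}^* G_i$ and applying $(i)$ in each coordinate gives
\[
\int_X u\,{\rm div}_\gamma^F G\,d\gamma=-\sum_{i=1}^m\int_X G_i\,d[\mu,k_i]_H=-\int_X[\sigma,G]_H\,d|\mu|\le|\mu|(X),
\]
since $|[\sigma,G]_H|\le|\sigma|_H\,|G|_F\le1$. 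Passing to the supremum yields $V_\gamma(u)\le|\mu|(X)<+\infty$, and Corollary~\ref{spaghetto} furnishes the opposite inequality, so in fact $V_\gamma(u)=|\mu|(X)$.

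The real content is $(ii)\Rightarrow(i)$, where a measure must be manufactured from the mere finiteness of $V_\gamma(u)$. My approach is to regularise with the Ornstein--Uhlenbeck semigroup and then pass to the limit. For $t>0$ the function $u_t:=T_tu$ belongs to $\mathbb D^{1,1}(X,\gamma)$ --- this smoothing is precisely what the Orlicz class $L(\log L)^{1/2}(X,\gamma)$ is designed to secure --- so by Lemma~\ref{macchiato} the $H$-valued measure $\mu_t:=\nabla_H u_t\,\gamma$ is well defined. The crucial point is a uniform bound on its total variation. Recalling (Definition~\ref{variation}, via the argument of Lemma~\ref{tavolo}) that for $O=X$ the variation may be computed over fields $\Phi\in\fcon_b^1(X,H)$ with $|\Phi|_H\le1$ and no support constraint, I integrate by parts and invoke the commutation identity to obtain, for any such $\Phi$,
\[
\int_X[\nabla_H u_t,\Phi]_H\,d\gamma=-\int_X u_t\,{\rm div}_\gamma\Phi\,d\gamma=-e^{-t}\int_X u\,{\rm div}_\gamma(T_t\Phi)\,d\gamma.
\]
Since $T_t\Phi$ is again an admissible field with $|T_t\Phi|_H\le1$ (by Jensen's inequality), the right-hand side is at most $e^{-t}V_\gamma(u)$ in absolute value; taking the supremum over $\Phi$ gives $|\mu_t|(X)=\int_X|\nabla_H u_t|_H\,d\gamma\le V_\gamma(u)$ for every $t>0$.

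The decisive and genuinely infinite-dimensional step is to extract a limit $\mu\in\mathscr M(X,H)$ of the family $\{\mu_t\}$ as $t\downarrow0$. This is the main obstacle: a uniform total-variation bound does not by itself give weak-$*$ compactness, because $X$ is not locally compact and the densities $\nabla_H u_t$ need not be uniformly integrable, so Riesz representation is unavailable directly. I would overcome it by reducing to finite dimensions, where classical Gaussian-weighted $BV$ compactness applies: one replaces the regularisations by the cylindrical conditional expectations $\mathbb E_m u=v_m\circ\pi_m$, checks the contraction property $V_\gamma(\mathbb E_m u)\le V_\gamma(u)$, represents each $v_m$ by a measure on $\R^m$ through the finite-dimensional theory, and then lets $m\to\infty$ along the consistent martingale family, whose limit is a Radon measure on $X$ thanks to the tightness carried by $\gamma$ --- this is exactly the mechanism inherited from \cite{AmbMirManPal10}. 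Once such a $\mu$ is in hand, the verification of $(i)$ is routine: for $G\in{\rm Lip}_c(X)$ and $h\in H$, Remark~\ref{salama} gives $\int_X G\,d[\mu_t,h]_H=-\int_X u_t\,\partial_h^*G\,d\gamma$, and letting $t\downarrow0$ the left side converges by weak-$*$ convergence while the right side converges because $u_t\to u$ in $L(\log L)^{1/2}(X,\gamma)$ and $\partial_h^*G=\partial_h G-G\hat h\in L^\Psi(X,\gamma)$ (using \eqref{palo} and the H\"older inequality of Proposition~\ref{prenotazione}). This produces \eqref{pioggia}, and combined with the first part gives $V_\gamma(u)=|\mu|(X)$.
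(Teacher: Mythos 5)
First, a point of comparison: the paper does not prove this Proposition at all --- it is stated as ``a part of \cite[Theorem 4.1]{AmbMirManPal10}'' and imported as a black box. So what you have written is not an alternative to an internal argument but a reconstruction of the cited external proof. Your implication $(i)\Rightarrow(ii)$ is correct and matches the routine computation the paper itself performs elsewhere (Remark \ref{implicazione_facile}), and your uniform bound $\int_X|\nabla_H T_tu|_H\,d\gamma\le V_\gamma(u)$ is sound: $T_t\Phi$ is again cylindrical with values in the same finite-dimensional subspace, $|T_t\Phi|_H\le T_t|\Phi|_H\le 1$ by Jensen, and the identification of the variation over $\fcon_b^1(X,H)$ fields with the one over ${\rm Lip}_c$ fields is exactly the content of Lemma \ref{tavolo}. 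You also correctly diagnose that the uniform total-variation bound alone does not yield compactness in the non-locally-compact space $X$, which is the genuine difficulty.

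There is, however, one real seam in the argument. You \emph{construct} the limit measure $\mu$ from the cylindrical conditional expectations $\mathbb E_m u$ (consistency, tightness, finite-dimensional $BV$ theory), but you \emph{verify} the integration-by-parts identity $(i)$ by letting $t\downarrow0$ in $\int_X G\,d[\mu_t,h]_H=-\int_X u_t\,\partial_h^*G\,d\gamma$ with $\mu_t:=\nabla_HT_tu\,\gamma$, invoking ``weak-$*$ convergence'' of $\mu_t$ to $\mu$. Nothing in your proposal connects the two families: $\mu$ is a limit of the $\mathbb E_m$-measures, not of the $\mu_t$, and weak-$*$ convergence of $\mu_t$ to $\mu$ is precisely the kind of statement you declared unavailable one paragraph earlier. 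To close the gap you should run the verification along the same family used to build $\mu$: for cylindrical $G\in\fcon_b^1(X)$ based on the first $m$ coordinates, the identity for $\mathbb E_mu$ is an exact finite-dimensional integration by parts against the consistent pushforward measures, it is stable under $m\to\infty$ by construction, and Lemma \ref{tavolo} then upgrades the test class from $\fcon_b^1(X)$ to ${\rm Lip}_c(X)$; the assembly of the coordinate measures into a single element of $\mathscr M(X,H)$ is the mechanism recorded in Remark \ref{pendolare}. (Alternatively one can show a posteriori that $\mu_t\to\mu$ weakly against $C_b(X)$ once $(i)$ is known, but that cannot be used to prove $(i)$.) The rest --- $u_t\to u$ in $L(\log L)^{1/2}$, $\partial_h^*G\in L^\Psi(X,\gamma)$ via \eqref{palo} and Proposition \ref{prenotazione} --- is fine.
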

We remark that, arguing component by component in a way similar to the proof of \cite[Theorem 4.1]{AmbMirManPal10} and taking into account 
\cite[Definition 3.1]{AmbMirManPal10} and subsequent paragraphs we have the following result.
\begin{rmk}\label{noteemilia1}
Let $h\in QX^*$ and let $u\in L({\rm log}L)^{1/2}(X,\gamma)$. Then, the following are equivalent:
\begin{itemize}
\item[$(i)$] there exists $\mu_h\in\mathscr M(X)$ such that
\begin{align*}
\int_{X}u\partial_h^* G d\gamma=-\int_X G d \mu_h, \qquad \forall G\in{\rm Lip}_c(X).
\end{align*}
\item[$(ii)$] $V_\gamma^h(u)<+\infty$.
\end{itemize}
Further, if we consider $F\subseteq QX^*$ with $F={\rm span}\{h_1,\ldots,h_m\}$ such that $V_\gamma^F(u)<+\infty$, 
then there exists a measure $\mu_F:=(\mu_1,\ldots,\mu_m)\in\mathscr M(X,F)$ which satisfies
\begin{align*}
\int_Xu\partial_h^* Gd\gamma=-\int_XGd[\mu_F,h]_H, \quad G\in{\rm Lip}_c(X),
\end{align*}
for any $h\in F$ and 
\begin{align*}
|\mu_F|(X)=|(\mu_1,\ldots,\mu_m)|(X)\leq V_\gamma^F(u).
\end{align*}
Finally, if $\sup_{F\subseteq QX^*}V_\gamma^F(u)<+\infty$, then $u\in BV(X,\gamma)$.
\end{rmk}

\begin{rmk}
The results of \cite{AmbMirManPal10} can be used because the definitions of $BV$ and variations are equivalent to ours. Moreover, from \cite[Theorem 4.1]{AmbMirManPal10}
the statements in Proposition \ref{noteemilia1} hold true for $G\in \fcon_b^1(X)$. We can generalize to $G\in {\rm Lip}_c(X)$ by arguing as in Lemma \ref{tavolo}.
\end{rmk}

\begin{thm}
\label{cassettiera}
{Let $f\in L({\rm log}L)^{1/2}(O,\gamma)$ be such that $V_\gamma(f,O)<+\infty$. 
Then, $f\in BV(O,\gamma)$, i.e., there exists a unique $H$-valued measure on $\mathcal B(O)$, denoted by $D_\gamma^Of$, 
which satisfies
\begin{align}
\label{nanoparticelle}
\int_Of{\rm div}_\gamma^FGd\gamma=-\int_O[G,dD_\gamma^Of]_H, \quad G\in{\rm Lip}_c(O,F),
\end{align}
where $F\subseteq QX^*$ finite dimensional subspace. Moreover, $|D_\gamma^Of|(O)= V_\gamma(f,O)$ and
\begin{align*}
|D_\gamma^O f|(A)=
\sup\left\{\int_Af{\rm div}^F_\gamma Gd\gamma:F\subseteq  QX^*\ {\rm { fin. \ dim.}}, \  G\in {\rm Lip}_c(A,F), 
\ |G(x)|_F\leq 1 \ \forall x\in A\right\},
\end{align*}
for any open set $A\subseteq O$.}
\end{thm}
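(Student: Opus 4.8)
The plan is to reduce the statement on $O$ to the whole-space result Proposition~\ref{noteemilia} by a cut-off and gluing argument. One cannot apply Proposition~\ref{noteemilia} to the null extension $\bar f$ directly: already for $f\equiv 1$ one has $V_\gamma(1,O)=0$, whereas $\bar f=\chi_O$ lies in $BV(X,\gamma)$ only when $O$ has finite Gaussian perimeter. The remedy is to extend only cut-off pieces $\theta f$, for which the boundary term is replaced by a harmless $L^1$ quantity. Concretely, I would first fix an exhaustion of $O$ by open sets $A_n\Subset O$ with $A_n\subseteq A_{n+1}$ and $\bigcup_nA_n=O$ (for instance $A_n=\{x\in O:d(x,O^c)>1/n\}\cap B(n)$), and for each $n$ a cut-off $\theta_n\in{\rm Lip}_c(O)$ with $0\le\theta_n\le 1$, $\theta_n\equiv 1$ on $A_n$ and ${\rm supp}\,\theta_n\Subset O$.

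The key step is to bound $V_\gamma(\theta_n\bar f)$, where $\theta_n\bar f$ is the null extension of $\theta_n f$. For any finite-dimensional $F={\rm span}\{k_1,\dots,k_m\}\subseteq QX^*$ and any $G\in{\rm Lip}_c(X,F)$ with $|G|_F\le 1$, the Leibniz rule gives
\[
\theta_n\,{\rm div}_\gamma^F G={\rm div}_\gamma^F(\theta_n G)-[\nabla_H\theta_n,G]_H,
\]
and since $\theta_n G\in{\rm Lip}_c(O,F)$ with $|\theta_n G|_F\le 1$, integrating against $\bar f$ yields
\begin{align*}
\int_X\theta_n\bar f\,{\rm div}_\gamma^F G\,d\gamma
&=\int_O f\,{\rm div}_\gamma^F(\theta_n G)\,d\gamma-\int_O f\,[\nabla_H\theta_n,G]_H\,d\gamma\\
&\le V_\gamma(f,O)+\|\nabla_H\theta_n\|_\infty\,\|f\|_{L^1(O,\gamma)}.
\end{align*}
Taking the supremum over $G$ and $F$ shows $V_\gamma(\theta_n\bar f)<+\infty$, so Proposition~\ref{noteemilia} produces $\mu_n:=D_\gamma^X(\theta_n\bar f)\in\mathscr M(X,H)$. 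Because $\theta_n\equiv 1$ on $A_n$ and ${\rm div}_\gamma^F G$ is supported in ${\rm supp}\,G\subseteq A_n$ for $G\in{\rm Lip}_c(A_n,F)$, testing the whole-space identity against such $G$ gives $\int_{A_n}f\,{\rm div}_\gamma^F G\,d\gamma=-\int_{A_n}[G,d\mu_n]_H$. Thus $\mu_n\mres A_n$ is a weak gradient of $f$ on $A_n$, and the uniqueness contained in Corollary~\ref{spaghetto} forces the compatibility $\mu_n\mres A_m=\mu_m\mres A_m$ for all $n\ge m$.

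Finally I would glue and conclude. The compatibility together with the uniform bound $|\mu_n|(A_n)=V_\gamma(f,A_n)\le V_\gamma(f,O)$ (the localized total-variation formula obtained from Corollary~\ref{spaghetto} applied to $\mu_n\mres A_n$) allows me to define $\mu_f\in\mathscr M(O,H)$ by $\mu_f\mres A_n:=\mu_n\mres A_n$, with $|\mu_f|(A_n)\uparrow|\mu_f|(O)$. Since every $G\in{\rm Lip}_c(O,F)$ has support in some $A_n$, the identity \eqref{nanoparticelle} holds on all of $O$, so $f\in BV(O,\gamma)$ with $D_\gamma^Of=\mu_f$. Moreover $V_\gamma(f,A_n)\uparrow V_\gamma(f,O)$ (each admissible $G$ on $O$ is admissible for some $A_n$), whence $|D_\gamma^Of|(O)=\lim_nV_\gamma(f,A_n)=V_\gamma(f,O)$; the stated formula for $|D_\gamma^Of|(A)$ on open $A\subseteq O$ is then exactly Corollary~\ref{spaghetto} combined with the integration-by-parts identity (replacing $G$ by $-G$ in the supremum). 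The main obstacle is precisely the displayed key step: one must cut off \emph{before} extending and verify that the resulting error is the controllable $L^1$ term $\int_Of\,[\nabla_H\theta_n,G]_H\,d\gamma$ rather than an uncontrolled perimeter contribution from $\partial O$; everything else is standard measure-theoretic bookkeeping.
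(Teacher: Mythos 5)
Your proposal is correct and follows essentially the same route as the paper: cut off before extending by zero, control $V_\gamma(\theta_n\bar f)$ via the Leibniz rule so that the boundary contribution is replaced by the $L^1$ term $\int_O f[\nabla_H\theta_n,G]_H\,d\gamma$, invoke the whole-space result (Proposition~\ref{noteemilia}) to get measures $D_\gamma^X(\theta_n\bar f)$, localize and glue along an exhaustion $A_n\Subset O$ using the uniform bound by $V_\gamma(f,O)$ and the independence of the restriction from the cut-off. The only cosmetic difference is that the paper realizes the glued measure as the limit of a Cauchy sequence in the Banach space $\mathscr M(O,H)$ rather than by direct restriction-compatibility, but the substance is identical.
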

\begin{proof}

Let $g$ be a Lipschitz function such that 
${\rm dist}({\rm supp}\ g,O^c)>0$ and $\|g\|_\infty\leq1$. Then, $g\overline f\in L({\rm log}L)^{1/2}(X,\gamma)$ and $\sup_{F\subseteq QX^*}V^F_\gamma(g\bar f)<+\infty$. 
Recalling the concepts in Subsection \ref{Orlicz} it is easy to see that 
\begin{align*}
\int_XA_{1/2}(|g\overline f|)d\gamma
\leq &\int_XA_{1/2}(|\overline f|)d\gamma
=\int_OA_{1/2}(|f|)d\gamma<+\infty,
\end{align*}
since $f\in L({\rm log}L)^{1/2}(O,\gamma)$. Hence, $A_{1/2}(|g\overline f|)\in L^1(X,\gamma)$ and 
therefore $g\overline f\in L({\rm log}L)^{1/2}(X,\gamma)$. Further, for any $t>0$ we have
\begin{align*}
\int_XA_{1/2}(|(g\overline f|/t)d\gamma
\leq\int_OA_{1/2}(|f|/t)d\gamma,
\end{align*}
and so $\|g\overline f\|_{L({\rm log}L)^{1/2}(X,\gamma)}\leq \|f\|_{ L({\rm log}L)^{1/2}(O,\gamma)}$. 
Finally, for any $F\subseteq QX^*$ finite dimensional and any $G\in{\rm Lip}_c(X,F)$ with $|G(x)|_F\leq 1$ for any $x\in X$, 
the function $gG$ belongs 
to ${\rm Lip}_c(O,F)$ and $\|gG\|_\infty\leq 1$. 
Since ${\rm div}^F_\gamma(gG)=g{\rm div}^F_\gamma G+[G,\nabla_Hg]_H$, it follows that
\begin{align*}
\int_X(g\bar f){\rm div}_\gamma^F Gd\gamma
= & \int_X \bar f{\rm div}_\gamma^F(gG)d\gamma-\int_Xf[ G,\pi_F (\nabla_H g)]_Hd\gamma \\
\leq&  V_\gamma(f,O)+\|\nabla_Hg\|_{L^\infty(O,\gamma;H)}\|f\|_{L^1(O,\gamma)},
\end{align*}
since $g$ is Lipschitz. From Proposition \ref{noteemilia}, there exists a Borel $H$--valued 
measure $D_\gamma^{X}(g\bar f)$ such that
\begin{align}
\int_X(g\bar f)\partial_h\psi d\gamma=-\int_X\psi d[h,D_\gamma^{X}(g\bar f)] _H, \label{gfprodotto}
\end{align}
for any $\psi\in {\rm Lip}_c(X)$ and any $h\in QX^*$. Hence, $g\bar f\in BV(X,\gamma)$ by Definition \ref{polenta}.

Let $A\Subset O$ be an open set. For every $g\in {\rm Lip}_c (O)$ such that $g_{|A}\equiv1$, 
we have seen that $g\bar f\in L({\rm log}L)^{1/2}(X,\gamma)$
and $g\bar f\in BV(X,\gamma)$. We define 
\[
\nu_{A}(B):=D_{\gamma}^{X}(g\bar f)(B\cap A)
\]
for every $B\in\mathcal B(X)$. Clearly, $\nu_{A}$ is concentrated on $A$ and by Remark \ref{equality} we deduce that $\nu_{A}$ does not depend on the choice
of $g$.
Further, Corollary \ref{spaghetto} implies
\begin{align*}
|\nu_{A}|(O)=  |\nu_{A}|(A)
=  \sup\left\{\int_{A}\left[G,\sigma\right]_{H} d|\nu_{A}|:G\in\mbox{Lip}_{c}(A,F), \ F\subseteq QX^*\mbox{ fin. dim.},
|G| _{H}\leq1\right\}
\end{align*}
 where $\nu_{A}=\sigma|\nu_{A}|$. For any $G\in\mbox{Lip}_{c}(A,F)$ we have 
\[
\int_{A}\left[G,\sigma\right]_{H}\ d|\nu_{A}|=-\int_{X}fg\mbox{div}_{\gamma}G\ d\gamma=-\int_{X}f\mbox{div}_{\gamma}G\ d\gamma.
\]
Since $\mbox{Lip}_{c}(A,F)\subseteq\mbox{Lip}_{c}(O,F)$, we infer that $\int_{X}f\mbox{div}_{\gamma}G\leq V_{\gamma}(f,O)$. Then, $|\nu_{A}|(O)\leq V_{\gamma}(f,O)$.

We consider an increasing sequence $(A_n)$ of open sets such that $A_n\Subset O$ for any $n\in\N$, $\bigcup_{n\in\N}A_n=O$ and for any $B\Subset O$ there exists $\overline n\in\N$ such that $B\Subset A_{\overline n}$. A possible choice is $A_{n}:=\{x\in O:\mbox{dist}(x,O^{c})>n^{-1},\|x\|\leq n\}$ for any $n\in\N$. Since $(|\nu_{A_n}|(O))_{n\in \N}$ is an increasing bounded sequence, it admits limit $\elle\leq V_\gamma(f,O)$.

By the definition of $\nu_{A}$, it is clear that if $m<n$
then ${\nu_{A_n}}_{|A_m}={\nu_{A_{m}}}_{|A_{m}}$. It follows that 
\[
|\nu_{A_{n}}-\nu_{A_{m}}|(O)=|\nu_{A_{n}}|(A_n\backslash A_m)
=|\nu_{A_{n}}|(A_n)-|\nu_{A_{m}}|(A_m)=|\nu_{A_{n}}|(O)-|\nu_{A_{m}}|(O)
\]
{for $n>m$. Since $|\nu_{A_{n}}|(O)$ converges to $\elle$, the previous equation implies that $
(\nu_{A_{n}})$ is a Cauchy sequence in $\mathscr{M}(O,H)$, which is a Banach space
with norm 
\[
\|\mu\|:=|\mu|(O)
\]
(see e.g. \cite[Section I.5]{DieUhl77}) the 
discussion after Corollary 6). 
Therefore, $(\nu_{A_{n}})_{n\in \N}$ 
converges to a measure which we denote by $D_\gamma^Of=\sigma_{1}|D_\gamma^Of|$. Moreover, $|D_\gamma^Of|(O)=\elle$ and $D_\gamma^Of\mres A_{n}=\nu_{A_{n}}$ for every $n\in\N$.}

By Definition \ref{variation} there exists a sequence of functions $G_{n}$
such that,  for every $n\in\N$, $| G_{n}|_{H}\leq1$, $G_{n}\in\mbox{Lip}_{c}(O,F_{n})$
for some finite dimensional subspace $F_{n}\leq H$ and
\begin{align*}
 V_{\gamma}(f,O)=\lim_{n\rightarrow+\infty}\int_{O}f\mbox{div}_{\gamma}G_{n} d\gamma.
\end{align*}
The assumptions on $(A_n)_{n\in \N}$ imply that there exists an increasing sequence $(m_{n})_{n\in\N}\subseteq\N$
such that for any $n\in\N$ we have $G_{n}\in\mbox{Lip}_{c}(A_{m_n},F_{{n}})$. Therefore,
\[
V_{\gamma}(f,O)=\lim_{n\rightarrow+\infty}\int_{X}f\mbox{div}_{\gamma}G_{n} d\gamma\leq 
\lim_{n\rightarrow+\infty}|\nu_{A_{m_n}}|(O)=|D_\gamma^Of|(O)\leq V_{\gamma}(f,O),
\]
hence $|D_\gamma^Of|(O)\leq V_{\gamma}(f,O)$.

Finally, let $G\in\mbox{Lip}_{c}(O)$ and $h\in QX^*$. There exists
$n\in\N$ such that $G\in\mbox{Lip}_{c}(A_{n},F_{n})$ and 
\begin{align*}
\int_{O}\partial_{h}^{*}fG\ d\gamma
= & \int_{A_{n}}\partial_{h}^{*}fG\ d\gamma
=\int_{A_{n}}G d\left[h,\nu_{A_n}\right]_H
=\int_{A_{n}}G\left[h,\sigma_1\right]_H
d|D_\gamma^Of| 
= \int_{O} G \left[h,\sigma_{1}\right]_H\ d|D_\gamma^Of|.
\end{align*}
Therefore, by Definition \ref{polenta} it follows that $f$ is a function of bounded variation with weak
gradient $D_\gamma^Of$
which does not depend on the choice of $A_n$.

The second part of the statement follows from Corollary \ref{spaghetto}.
\end{proof}

Arguing as in Theorem \ref{cassettiera} it is possible to prove that if $f$ has finite variation $F\subseteq QX^*$ finite dimensional, 
then there exists a measure $D_\gamma^{O,F}f\in\mathscr M(O,F)$.
\begin{cor}\label{lasagna}
Let $f\in L({\rm log}L)^{1/2}(O,\gamma)$, let 
$F={\rm span}\{h_{1},\ldots,h_{k}\}$, where $h_i
\in  QX^*$ are orthonormal, and let us consider 
$V_\gamma^{F}(f,O)$ defined in \eqref{chiapponi}.
If $V_\gamma^{F}(f,O)<+\infty$, then there exists a measure $D_\gamma^{O,F}f\in\mathscr M(O,F)$ which enjoys
\begin{align*}
\int_O f{\rm div}_\gamma^F Gd\gamma=-\int_O [G,dD_\gamma^{O,F}f]_H, \quad G\in {\rm Lip}_c(O,F),
\end{align*}
$|D_\gamma^{O,F}f|=V_\gamma^F(f,O)$ and for any open set $A\subseteq O$ we have
\begin{align*}
|D_\gamma^{O,F}f|(A)=\sup\left\{\sum_{j=1}^k\int_Af\partial_{h_{j}}^* G_jd\gamma:G_j\in {\rm Lip}_c(A), \ j=1,\ldots,k,  \ \sum_{j=1}^k|G_j(x)|^2\leq 1 \ \forall x\in A\right\}.
\end{align*}
Moreover, let $h\in QX^*$:  
if $V_\gamma^h(O,f)<+\infty$, then there exists a finite measure $D_\gamma^{O,h}f\in\mathscr M(O)$ which satisfies
\begin{align*}
\int_O f \partial_h Gd\gamma=-\int_O G dD_\gamma^{O,h}f, \quad G\in {\rm Lip}_c(O),
\end{align*}
$|D_\gamma^{O,h}f|=V_\gamma^h(f,O)$ and for any open set $A\subseteq O$
\begin{align*}
|D_\gamma^{O,h}f|(A)=\sup\left\{\int_Af\partial_{h}^* Gd\gamma:G\in {\rm Lip}_c(A),  \ |G(x)|\leq 1 \ \forall x\in A\right\}.
\end{align*}
\end{cor}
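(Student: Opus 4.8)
The plan is to repeat the proof of Theorem~\ref{cassettiera} almost verbatim, replacing the global variation $V_\gamma(\cdot,O)$ by the $F$-variation $V_\gamma^F(\cdot,O)$ of \eqref{chiapponi}, the divergence ${\rm div}_\gamma$ by ${\rm div}_\gamma^F$, and $H$-valued measures by $F$-valued ones. The two structural inputs transfer directly: the role of Proposition~\ref{noteemilia} is taken by Remark~\ref{noteemilia1}, which for a fixed finite-dimensional $F\subseteq QX^*$ produces, whenever $V_\gamma^F(u)<+\infty$ on all of $X$, a measure $\mu_F\in\mathscr M(X,F)$ realizing the integration-by-parts identity with $|\mu_F|(X)\le V_\gamma^F(u)$; and the role of Corollary~\ref{spaghetto} is taken by Lemma~\ref{banana} applied with $Y=F$, which computes the total variation of an $F$-valued measure against $F$-valued Lipschitz test fields.

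First I would fix a Lipschitz cutoff $g$ with ${\rm dist}({\rm supp}\,g,O^c)>0$ and $\|g\|_\infty\le1$. For $G=\sum_{j=1}^kG_jh_j\in{\rm Lip}_c(O,F)$ one has the product rule ${\rm div}_\gamma^F(gG)=g\,{\rm div}_\gamma^F G+[\pi_F(\nabla_H g),G]_H$, whence
\[
\int_X(g\overline f)\,{\rm div}_\gamma^F G\,d\gamma
=\int_X\overline f\,{\rm div}_\gamma^F(gG)\,d\gamma-\int_O f\,[\pi_F(\nabla_H g),G]_H\,d\gamma
\le V_\gamma^F(f,O)+\|\nabla_H g\|_\infty\|f\|_{L^1(O,\gamma)},
\]
because $gG$ is admissible for $V_\gamma^F(f,O)$ and $g$ is Lipschitz. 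As in Theorem~\ref{cassettiera} this also gives $g\overline f\in L({\rm log}L)^{1/2}(X,\gamma)$, so $V_\gamma^F(g\overline f)<+\infty$ and Remark~\ref{noteemilia1} yields an $F$-valued measure $D_\gamma^X(g\overline f)$.

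Next I would exhaust $O$ by $A_n:=\{x\in O:{\rm dist}(x,O^c)>n^{-1},\ \|x\|\le n\}$, set $\nu_{A_n}(B):=D_\gamma^X(g\overline f)(B\cap A_n)$, and check that $\nu_{A_n}$ is independent of $g$. The displayed bound gives $|\nu_{A_n}|(O)\le V_\gamma^F(f,O)$, while the compatibility $\nu_{A_n}\mres A_m=\nu_{A_m}\mres A_m$ for $m<n$ makes $(\nu_{A_n})$ Cauchy in the Banach space $\mathscr M(O,F)$; its limit is $D_\gamma^{O,F}f$. The inequality $|D_\gamma^{O,F}f|(O)\le V_\gamma^F(f,O)$ follows from $|\nu_{A_n}|(O)\le V_\gamma^F(f,O)$, and the reverse one by feeding a maximizing sequence for $V_\gamma^F(f,O)$ (whose terms eventually live in some $A_{m_n}$) into the identity defining $\nu_{A_{m_n}}$; combining the two yields the equality $|D_\gamma^{O,F}f|=V_\gamma^F(f,O)$. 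The local formula for $|D_\gamma^{O,F}f|(A)$ is then Lemma~\ref{banana} with $Y=F$ together with the integration-by-parts identity. The statement for a single $h\in QX^*$ is the special case $k=1$, $F={\rm span}\{h\}$, after identifying $\mathscr M(O,F)$ with $\mathscr M(O)$.

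The one point deserving real care is the independence of $\nu_{A_n}$ from the cutoff $g$, i.e.\ the locality of the $F$-gradient; note that Remark~\ref{equality} cannot be invoked directly, since $g\overline f$ has only finite $F$-variation and need not lie in $BV(X,\gamma)$. This is remedied by an elementary localization: for $h\in F$ and $G\in{\rm Lip}_c(A_n)$ the field $\partial_h^*G$ is supported in ${\rm supp}\,G\Subset A_n$, so two admissible cutoffs $g_1,g_2$ (both $\equiv1$ near $\overline{A_n}$) give $\int_X(g_1\overline f)\partial_h^*G\,d\gamma=\int_X(g_2\overline f)\partial_h^*G\,d\gamma$, and the scalar formula \eqref{corsaro} of Lemma~\ref{banana} forces the two restricted measures to coincide on $A_n$.
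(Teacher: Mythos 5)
Your proposal is correct and follows exactly the route the paper intends: the paper gives no separate proof of Corollary~\ref{lasagna}, stating only that one argues as in Theorem~\ref{cassettiera} with Remark~\ref{noteemilia1} in place of Proposition~\ref{noteemilia} and Lemma~\ref{banana} (with $Y=F$) in place of Corollary~\ref{spaghetto}, which is precisely your plan. Your explicit treatment of the independence of $\nu_{A_n}$ from the cutoff $g$ — noting that Remark~\ref{equality} is not directly applicable since $g\overline f$ need only have finite $F$-variation, and replacing it by the localization argument via \eqref{corsaro} — addresses a point the paper leaves implicit and is a sound way to close it.
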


\subsection{The implications  \texorpdfstring{$(1)\Rightarrow (3)\Rightarrow (2)$}{}}
In this subsection we prove the remaining implications of Theorem~\ref{lauree}. Before stating the result we are interested in, we provide a useful result involving the Ornstein-Uhlenbeck semigroup $(T_t)_{t\geq0}$ and the space $BV(X,\gamma)$.
\begin{lemma}
\label{esami}
Let $f\in BV(X,\gamma)$, let $h\in H$ and let $O\subseteq X$ be an open set such that $|D^h_\gamma f|(\partial O)=0$. Then, 
\[
\lim_{t\to 0}
\int_O |\partial_hT_tf|d\gamma =
|D_\gamma^{X,h}f|(O).
\]
\end{lemma}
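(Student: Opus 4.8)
The plan is to establish the two inequalities $\liminf_{t\to 0}\int_O|\partial_h T_tf|\,d\gamma\ge |D_\gamma^{X,h}f|(O)$ and $\limsup_{t\to 0}\int_O|\partial_h T_tf|\,d\gamma\le |D_\gamma^{X,h}f|(O)$ separately. For the lower bound I would use lower semicontinuity. Since $T_t$ is strongly continuous on $L^1(X,\gamma)$, we have $T_tf\to f$ in $L^1(O,\gamma)$; moreover $T_tf\in\mathbb D^{1,1}(X,\gamma)$, so by Lemma~\ref{macchiato} and Corollary~\ref{lasagna} its restriction satisfies $V_\gamma^h(T_tf,O)=\int_O|\partial_h T_tf|\,d\gamma$. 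Since $f\in BV(X,\gamma)$, Remark~\ref{equality} and Corollary~\ref{lasagna} give $V_\gamma^h(f,O)=|D_\gamma^{X,h}f|(O)$. The lower semicontinuity of $V_\gamma^h(\cdot,O)$ with respect to $L^1(O,\gamma)$ convergence then yields $\liminf_{t\to 0}\int_O|\partial_h T_tf|\,d\gamma=\liminf_{t\to 0}V_\gamma^h(T_tf,O)\ge V_\gamma^h(f,O)=|D_\gamma^{X,h}f|(O)$. Note that this half does not use the hypothesis $|D_\gamma^{X,h}f|(\partial O)=0$.

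The core of the argument is the representation $(\partial_h T_tf)\,\gamma=e^{-t}\,T_t^*\big(D_\gamma^{X,h}f\big)$, where $T_t^*$ denotes the dual action of the Ornstein--Uhlenbeck semigroup on measures, $\int_X\varphi\,d(T_t^*\nu)=\int_X (T_t\varphi)\,d\nu$. To obtain it, fix $\varphi\in{\rm Lip}_c(O)$ with $|\varphi|\le 1$ and run the chain: integrate by parts (Remark~\ref{salama}, valid since $T_tf\in\mathbb D^{1,1}$) to move $\partial_h$ off $T_tf$; use the self-adjointness of $T_t$ on $L^2(X,\gamma)$ (which extends to the present pairing because $\partial_h^*\varphi=\partial_h\varphi-\varphi\hat h$ is bounded, $\varphi$ having bounded support); apply the commutation $T_t\partial_h^*=e^{-t}\partial_h^* T_t$, which is the $L^2$-adjoint of the relation $\partial_h T_t=e^{-t}T_t\partial_h$; and finally invoke the defining integration by parts \eqref{pioggia} for $f\in BV(X,\gamma)$. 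This gives the pointwise identity $\int_X\varphi\,\partial_h T_tf\,d\gamma=e^{-t}\int_X (T_t\varphi)\,dD_\gamma^{X,h}f$.

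For the upper bound, recall that since $T_tf\in\mathbb D^{1,1}$, Lemma~\ref{banana} gives $\int_O|\partial_h T_tf|\,d\gamma=\sup\{\int_X\varphi\,\partial_h T_tf\,d\gamma:\varphi\in{\rm Lip}_c(O),\ |\varphi|\le1\}$. Plugging in the identity and estimating crudely, $|T_t\varphi|\le T_t|\varphi|\le T_t\chi_O$ (as $|\varphi|\le\chi_O$ because ${\rm supp}\,\varphi\Subset O$), we obtain $\int_O|\partial_h T_tf|\,d\gamma\le e^{-t}\int_X (T_t\chi_O)\,d|D_\gamma^{X,h}f|$. It then remains to pass to the limit: from the Mehler formula $T_t\chi_O(x)\to\chi_O(x)$ for every $x\notin\partial O$ (for $x\in O$ the point $e^{-t}x+\sqrt{1-e^{-2t}}y$ eventually enters a fixed ball around $x$ contained in $O$, while for $x$ in the open exterior it eventually leaves $\overline O$), so $T_t\chi_O\to\chi_O$ pointwise $|D_\gamma^{X,h}f|$-a.e.\ precisely because $|D_\gamma^{X,h}f|(\partial O)=0$. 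Since $0\le T_t\chi_O\le1$ and the measure is finite, dominated convergence yields $\limsup_{t\to0}\int_O|\partial_h T_tf|\,d\gamma\le |D_\gamma^{X,h}f|(O)$, and combining with the lower bound closes the proof.

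I expect the main obstacle to be the rigorous justification of the representation identity, in particular extending \eqref{pioggia} from test functions in ${\rm Lip}_c(X)$ to $G=T_t\varphi$, which is smooth and bounded with bounded gradient but no longer has bounded support. This can be handled by the cut-off approximation already used in Lemma~\ref{tavolo}: writing $G_n=\theta_n\,T_t\varphi$ with $\theta_n\equiv1$ on $B(n)$ and $\theta_n\equiv0$ off $B(n+1)$, one applies \eqref{pioggia} to $G_n\in{\rm Lip}_c(X)$ and lets $n\to\infty$, checking that the error term $\int_X f\,(T_t\varphi)\,\partial_h\theta_n\,d\gamma$ vanishes (its integrand is dominated by $C|f|\chi_{B(n+1)\setminus B(n)}$, integrable and supported on a set of vanishing $\gamma$-measure) and that the remaining terms converge by dominated convergence together with Proposition~\ref{prenotazione} applied to $(T_t\varphi)\hat h\in L^\Psi(X,\gamma)$.
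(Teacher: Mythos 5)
Your argument is correct, but it follows a genuinely different route from the paper's. The paper disposes of the lemma in three lines by citation: the convergence of the total masses $\int_X|\partial_hT_tf|\,d\gamma\to|D_\gamma^{X,h}f|(X)$ is taken from \cite[Theorem 4.4]{AmbMirManPal10}, the inequality $\limsup_{t\to0}\int_C|\partial_hT_tf|\,d\gamma\le|D_\gamma^{X,h}f|(C)$ for closed $C$ is taken from the Alexandrov--Portmanteau theorem \cite[Theorem 8.2.3]{Bog06}, and the conclusion follows by sandwiching $O$ between its closure and itself using $|D_\gamma^{X,h}f|(\partial O)=0$. You instead re-derive the underlying weak convergence from scratch: your lower bound via lower semicontinuity of $V_\gamma^h(\cdot,O)$ coincides in substance with the paper's implicit first inequality, but your upper bound replaces the two external citations by the explicit representation $\int_X\varphi\,\partial_hT_tf\,d\gamma=e^{-t}\int_X T_t\varphi\,dD_\gamma^{X,h}f$ (which is exactly the commutation identity the paper records after Lemma~\ref{macchiato}, applied to $\Phi=\varphi h\in{\rm Lip}_b(X,H)$, so your cut-off justification is consistent with what the paper already proves) together with the crude bound $|T_t\varphi|\le T_t\chi_O$ and the pointwise convergence $T_t\chi_O\to\chi_O$ off $\partial O$. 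This buys a self-contained, quantitative estimate $\int_O|\partial_hT_tf|\,d\gamma\le e^{-t}\int_XT_t\chi_O\,d|D_\gamma^{X,h}f|$ at the cost of length. One small imprecision: your claim that $\partial_h^*\varphi=\partial_h\varphi-\varphi\hat h$ is bounded because $\varphi$ has bounded support is only valid for $h\in QX^*$ (where $\hat h\in X^*$ is continuous); for general $h\in H$ the functional $\hat h$ need not be bounded on bounded sets, and likewise the functional $V_\gamma^h$ is defined in \eqref{chiapponi} only for $h\in QX^*$. This is harmless --- one runs your argument for $h\in QX^*$, using $\hat h\in L^\Psi$ and Proposition~\ref{prenotazione} where boundedness fails, and passes to general $h\in H$ by the density argument the paper uses in the remark following Definition~\ref{polenta} --- but it should be said.
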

\begin{proof}
We can apply \cite[Theorem 4.4]{AmbMirManPal10} because the definitions of variation are equivalent, therefore we know that 
$\int_X |\partial_hT_tf|d \gamma \rightarrow  |D_\gamma^{X,h}f|(X)$ as $t\rightarrow0^+$.
Further, \cite[Theorem 8.2.3]{Bog06} gives 
\begin{align*}
\limsup_{t\rightarrow0}
\int_C |\partial_hT_tf|d \gamma\leq |D_\gamma^{X,h}f|(C),
\end{align*}
for any closed set $C\subseteq X$. Therefore,
\begin{align*}
|D_\gamma^{X,h}f|(O)\leq\liminf_{t\rightarrow0}
\int_O |\partial_hT_tf|d\gamma 
\leq \limsup_{t\rightarrow0}
\int_{\overline{O}} |\partial_hT_tf|d\gamma
\leq |D_\gamma^{X,h}f|(\overline O)
= |D_\gamma^{X,h}f|(O),
\end{align*}
since $|D_\gamma^{X,h}f|(\partial O)=0$.
\end{proof}

\begin{rmk}
\label{implicazione_facile}
We easily deduce that $(1)\Rightarrow (2)$ i.e. that $f\in BV(O,\gamma)$ implies $V_\gamma(f,O)=|D_\gamma^Of|(O)$.
Indeed, let $f\in BV(O,\gamma)$ and let $F$ be a finite dimensional subspace of $QX^*$. Then for any $G\in {\rm Lip}_c(O,F)$, the integration by parts formula \eqref{pioggia} gives
\begin{align}
\int_Of({\rm div}_\gamma^FG) d\gamma
= &-\int_O [G,dD_\gamma^Of]_H
= -\int_O[G,\sigma]_H d|D_\gamma^Of|\leq |D_\gamma^Of|(O),
\label{jax}
\end{align}
where $D_\gamma^O f=\sigma |D_\gamma^O f|$ is the polar decomposition of $D_\gamma^Of$.

\noindent
Since $D_\gamma^Of$ is a finite Radon measure, taking the supremum with respect to $F$ and $G\in {\rm Lip}_c(O,F)$ with $|G(x)|_F\leq 1$ for any $x\in O$,
in both sides of \eqref{jax} and taking into account \eqref{corsa1}  in Corollary \ref{spaghetto} we infer that $V_\gamma(f,O)\leq |D_\gamma^Of|(O)$.
In particular, from Theorem \ref{cassettiera} we conclude that $V_\gamma(f,O)=|D_\gamma^Of|(O)$.
\end{rmk}

Implication $(1)\Rightarrow (3)$ is the content of the following proposition.
\begin{pro}
If $f\in BV(O,\gamma)$ then $L_\gamma(f,O)\leq |D_\gamma^Of|(O)$.
\label{cornice}
\end{pro}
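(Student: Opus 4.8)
The plan is to construct, for each $\eps>0$, a function $f_\eps\in W^{1,1}(O,\gamma)$ with $\|f_\eps-f\|_{L^1(O,\gamma)}\le\eps$ and $\int_O|\nabla_Hf_\eps|_H\,d\gamma\le|D_\gamma^Of|(O)+\eps$; feeding these into the definition \eqref{fattoria} of $L_\gamma(f,O)$ immediately gives $L_\gamma(f,O)\le|D_\gamma^Of|(O)$. The construction is a Meyers--Serrin type smooth approximation in which the role of the mollifier is played by the Ornstein--Uhlenbeck semigroup $T_t$. First I would fix a locally finite Lipschitz partition of unity $\{\psi_k\}_{k\in\N}$ on $O$, subordinate to the cover by the annuli $A_{k+1}\setminus\overline{A_{k-1}}$, where $(A_k)$ is the exhaustion used in the proof of Theorem~\ref{cassettiera}. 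Thus $\psi_k\in{\rm Lip}_c(O)$, $0\le\psi_k\le1$, $\sum_k\psi_k\equiv1$ on $O$, and $\nabla_H\psi_k\in L^\infty(X,\gamma;H)$ (a bounded $\|\cdot\|_X$-Lipschitz function is $H$-Lipschitz, since $H\hookrightarrow X$ continuously). By the Leibniz computation carried out inside the proof of Theorem~\ref{cassettiera}, each $\psi_k\bar f$ lies in $BV(X,\gamma)$ with
\[
D_\gamma^X(\psi_k\bar f)=\psi_k\,D_\gamma^Of+f\,\nabla_H\psi_k\,\gamma\mres O .
\]
Local finiteness gives $\sum_k\nabla_H\psi_k\equiv0$ on $O$, hence $\sum_k f\,\nabla_H\psi_k=0$ in $L^1(X,\gamma;H)$; this cancellation is the crux of the argument.

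Next I would set $f_\eps:=\big(\sum_k T_{t_k}(\psi_k\bar f)\big)\big|_O$ for parameters $t_k\downarrow0$ to be chosen. Using the commutation identity $\nabla_HT_tg=e^{-t}T_t\,D_\gamma^Xg$ for $g\in BV(X,\gamma)$ (the $BV$ extension of the formula $\nabla_HT_t=e^{-t}T_t\nabla_H$ recorded in Section~\ref{pasticcio}, as in \cite{AmbMirManPal10}), I split the $H$-gradient of the sum as $S_1+S_2$, where
\[
S_1=\sum_k e^{-t_k}T_{t_k}\big[\psi_k D_\gamma^Of\big],\qquad
S_2=\sum_k e^{-t_k}T_{t_k}\big[f\,\nabla_H\psi_k\,\gamma\mres O\big].
\]
For $S_1$ I use contractivity (in total variation) of $T_t$ acting on finite measures together with $\sum_k\psi_k\equiv1$ to get $\int_X|S_1|_H\,d\gamma\le\sum_k\int_O\psi_k\,d|D_\gamma^Of|=|D_\gamma^Of|(O)$. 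For $S_2$ the cancellation $\sum_k f\nabla_H\psi_k=0$ lets me rewrite $S_2=\sum_k\big(e^{-t_k}T_{t_k}[f\nabla_H\psi_k]-f\nabla_H\psi_k\big)$, and since each $f\nabla_H\psi_k\in L^1(X,\gamma;H)$ and $T_t$ is strongly continuous on $L^1$, I can choose $t_k$ small enough that the $k$-th summand has $L^1$-norm $<\eps 2^{-k}$, whence $\int_X|S_2|_H\,d\gamma<\eps$.

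Imposing in addition $\|T_{t_k}(\psi_k\bar f)-\psi_k\bar f\|_{L^1(X,\gamma)}<\eps 2^{-k}$ controls the $L^1$ error, because $\sum_k\psi_k\bar f=\bar f$ (on $O^c$ both sides vanish). Absolute summability in $L^1$ of all three series makes the partial sums Cauchy in $\mathbb D^{1,1}(X,\gamma)$, so by closedness of $\nabla_H$ the full sum belongs to $\mathbb D^{1,1}(X,\gamma)=W^{1,1}(X,\gamma)$, and its restriction $f_\eps$ lies in $W^{1,1}(O,\gamma)$ by Remark~\ref{bottiglia}, with $\nabla_Hf_\eps=(S_1+S_2)|_O$. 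This yields $\|f_\eps-f\|_{L^1(O,\gamma)}\le\eps$ and $\int_O|\nabla_Hf_\eps|_H\,d\gamma\le\int_X|S_1|_H\,d\gamma+\int_X|S_2|_H\,d\gamma\le|D_\gamma^Of|(O)+\eps$, and letting $\eps\to0$ (e.g.\ $\eps=1/n$) produces the required sequence.

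The main obstacle is the term $S_2$: a naive estimate $\sum_k|D_\gamma^X(\psi_k\bar f)|(X)$ would pick up $\sum_k\int_O|f|\,|\nabla_H\psi_k|_H\,d\gamma$, which does \emph{not} vanish because the cut-off gradients blow up on the overlaps. The whole point is to keep the $f\nabla_H\psi_k$ terms together, so that the exact cancellation $\sum_k\nabla_H\psi_k=0$ can be combined with the strong $L^1$-continuity of $T_t$. A secondary technical difficulty is that, unlike a compactly supported mollifier, $T_t$ does not preserve supports, so the series $\sum_k T_{t_k}(\psi_k\bar f)$ overlaps globally; its convergence in $\mathbb D^{1,1}(X,\gamma)$ is guaranteed here only because every series involved is absolutely convergent in $L^1$, which is what makes the use of a global semigroup in place of a local mollifier legitimate.
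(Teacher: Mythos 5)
Your overall strategy --- a Meyers--Serrin approximation in which the Ornstein--Uhlenbeck semigroup plays the role of the mollifier, a Lipschitz partition of unity subordinate to an exhaustion by annuli, and the cancellation $\sum_k\nabla_H\psi_k\equiv0$ to dispose of the cut-off terms --- is exactly the strategy of the paper's proof. But you place the partition of unity \emph{inside} the semigroup, $f_\eps=\sum_kT_{t_k}(\psi_k\bar f)$, whereas the paper keeps it outside, $f_\eps=\sum_k\psi_k\,f_{\eps,k}$ with $f_{\eps,k}=T_{t_{\eps,k}}(\varphi_k f)$. This is not cosmetic. The paper's sum is locally finite (at each point at most two terms are nonzero, since ${\rm supp}\,\psi_k\subseteq U_k$ and $U_k\cap U_j=\emptyset$ for $|k-j|>1$), so $f_\eps$ and $\nabla_Hf_\eps$ are defined pointwise as finite sums, only their integrability has to be checked, and the identity $\sum_k\nabla_H\psi_k=0$ is applied to a pointwise finite sum, where it is exact. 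Your sum is \emph{not} locally finite, because $T_t$ destroys supports; you therefore need the partial sums to converge in $\mathbb D^{1,1}(X,\gamma)$ and you need to identify the limit of their gradients, and this is where there is a genuine gap.

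Concretely, the gradient of your $N$-th partial sum is
\begin{align*}
\sum_{k\le N}\nabla_HT_{t_k}(\psi_k\bar f)
=\underbrace{\sum_{k\le N}e^{-t_k}T_{t_k}^*\big[\psi_kD_\gamma^Of\big]}_{A_N}
+\underbrace{\sum_{k\le N}\big(e^{-t_k}T_{t_k}[f\nabla_H\psi_k]-f\nabla_H\psi_k\big)}_{B_N}
+\underbrace{f\,\nabla_H\Big(\sum_{k\le N}\psi_k\Big)}_{C_N}.
\end{align*}
Your estimates for $S_1$ and $S_2$ show that $A_N$ and $B_N$ converge absolutely, but the rewriting $S_2=\sum_k\big(e^{-t_k}T_{t_k}[f\nabla_H\psi_k]-f\nabla_H\psi_k\big)$ tacitly assumes that $C_N\to0$ in $L^1(X,\gamma;H)$, and this does not follow from the pointwise identity $\sum_k\nabla_H\psi_k\equiv0$. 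Indeed $\chi_N:=\sum_{k\le N}\psi_k$ equals $1$ on $\overline{O_{N-1}}$ and is supported in $O_N$, so $\nabla_H\chi_N$ lives on the transition annulus, where it equals $-\nabla_H\psi_{N+1}$; hence $\|C_N\|_{L^1}$ is of the order of ${\rm Lip}(\psi_{N+1})\int_{O\setminus O_{N-1}}|f|\,d\gamma$, a product of a quantity that blows up with one that merely tends to $0$, with no control on the rate. Shrinking the $t_k$ cannot help, since $C_N$ does not depend on them; and without $C_N\to0$ you can conclude neither that the partial sums are Cauchy in $\mathbb D^{1,1}(X,\gamma)$ (so closability of $\nabla_H$ does not apply) nor that $\nabla_Hf_\eps=S_1+S_2$ in the distributional sense, since the same term obstructs the integration by parts. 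The repair is precisely what the paper does: multiply by $\psi_k$ \emph{after} smoothing, restoring local finiteness so that $C_N$ never appears; the price is the extra bookkeeping with the sets $O_{i_\eps-1}$ and Lemma~\ref{esami}, which replaces your clean total-variation contraction bound for $S_1$. A secondary point, not a gap: the identity $\nabla_HT_tg=e^{-t}T_tD_\gamma^Xg$ for $g\in BV(X,\gamma)$ should be read through the adjoint action of $T_t$ on measures, $\int\Phi\,d(T_t^*\mu)=\int T_t\Phi\,d\mu$, for which the contraction $|T_t^*\mu|(X)\le|\mu|(X)$ does hold; that part of your argument is correct.
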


\begin{proof}
We adapt the proofs in \cite[Theorem 3.9]{AmbFusPal00} and in \cite[Proposition 7.5.9]{tesiGiorgio}. 
Our aim is proving that there exists a sequence $(f_\varepsilon)\subseteq \mathbb D^{1,1}(X,\gamma)$ such that $f_\varepsilon\rightarrow f$ in $L^1(O,\gamma)$ and 
$\int_O |\nabla_Hf_\varepsilon|_Hd\gamma\rightarrow |D_\gamma^Of|(O)$ 
as $\varepsilon\rightarrow0$. 
Then by definition
$L_\gamma(f,O)\leq 
|D_\gamma ^Of|(O)$.

Assume that $f\in BV(O,\gamma)$. Since $|D_\gamma^Of|(O)<+\infty$, from Remark \ref{misura bordo} for all but at most countable $r\in(1,+\infty)$ we have
$|D_\gamma^Of|(\partial (O_{(-1/r)}))= 0$, where $O_{-\alpha}:=\{x\in O:{\rm dist}(x,O^c)>\alpha\}$ for any positive $\alpha$. Thus, for any $i\in\N$ we set
$O_i:=O_{-(r_i)^{-1}}$ such that $(r_i)$ is an increasing sequence of positive numbers, $r_i\rightarrow+\infty$ as $i\rightarrow+\infty$ and $|D_\gamma^Of|(\partial 
(O_{(-1/r_i)}))=0$ for any $i\in\N$.
We introduce a sequence of Lipschitz functions $(\varphi_i)_{i\in\N}\subseteq {\rm Lip}_c(O)$ such that $\varphi_i\equiv 1$ on $O_i$ for any $i\in\N$. Further, we define
\begin{align*}
f_i:=
\begin{cases}
f\varphi_i, & {\rm in} \ O, \\
0, & {\rm in} \ {O}^c.
\end{cases}
\end{align*}
Arguing as in the proof of {Theorem} \ref{cassettiera}, it follows that $f_i\in BV(X,\gamma)$ and 
\begin{align}
\label{der_i}
D^X_\gamma f_i=\varphi_i
D_\gamma^O f+f\nabla_H\varphi_i\gamma.
\end{align}
Since ${f_i}_{|O_i}\equiv f$, it follows that 
${D^X_\gamma f_i\mres O_i}= D^O_\gamma f\mres O_i$. Let us set
\begin{align*}
f_t:=T_t \overline{f}, \quad f_{i,t}:=T_t(f_i), \quad i\in\N, \ t \geq0,
\end{align*}
where $(T_t)_{t\geq0}$ is the Ornstein-Uhlenbeck semigroup given by
\eqref{bassano}. 
Since $\overline{f},f_i\in L(\log L)^{1/2}(X,\gamma)$, it follows that 
$f_{i,t}\in \mathbb D^{1,1}(X,\gamma)$ for any $i\in\N$ and any $t>0$ and $f_{i,t}\rightarrow f_i$ (resp. $f_t\rightarrow f$) 
in $L(\log L)^{1/2}(O_i,\gamma)$ (resp. $L(\log L)^{1/2}(O,\gamma)$) as $t\rightarrow0^+$ (see \cite[Proposition 3.6(i)-(ii)]{FuHi01}). 
Therefore $f_t\rightarrow f$ (resp. $f_{i,t}\rightarrow f_i$) in $L^{1}(O,\gamma)$ (resp. $L^{1}(O_i,\gamma)$) as $t\rightarrow0^+$. Hence, from Lemma \ref{esami} and \eqref{der_i} we deduce that
\begin{align}
\lim_{t\to 0}
\int_O |\nabla_Hf_{i,t}|_H d \gamma
=
 |D^X_\gamma f_{i}|(O), 
 \quad 
 \lim_{t\to 0}
\int_{O_i} |\nabla_Hf_{i,t}|_H d\gamma=
 |D^O_\gamma f|(O_i), \quad i\in \N.
 \label{irati}
\end{align} 

We define $U_1:=O_1$, $U_2:=O_2$ and $U_i:=O_{i}\setminus \overline{O_{i-2}}$ for any $i>2$.
Further, $(U_i)$ is an open covering of $O$ and $U_i\cap U_j=\emptyset$ for any $|i-j|>1$. Therefore, from \cite[Corollary 1.4]{AlbMaRoc97} with $W={\rm Lip}_c(O)$, there exists a partition of unity $(\psi_i)\subseteq {\rm Lip}_c(O)$ subordinated to $(U_i)$, i.e.,
\begin{align*}
\psi_i\geq0,  \quad \sum_{i\in\N}\psi_i\equiv1, \qquad {\rm supp}(\psi_i)\subseteq U_i, \ i\in\N.
\end{align*}
Let us fix $\varepsilon>0$. Since $D_\gamma^Of$ is a finite Radon measure and thanks to Lemma \ref{esami} there exists $i_\varepsilon>0$ such that
\begin{align}
|D_\gamma^Of|(O\setminus O_{i})|\leq \varepsilon, \qquad i\geq i_\varepsilon-1.
\label{didier}
\end{align}
Moreover, from the convergence of $f_{i,t}$ to $f$ in $L(\log L)^{1/2}(O_i,\gamma)$ and \eqref{irati} there exists $t_\varepsilon>0$ such that for any $i\in\{1,\ldots,i_\varepsilon\}$ we have
\begin{align}
& \|f_{i_\varepsilon,t_\varepsilon}-f\|_{L(\log L)^{1/2}(O_{i_\varepsilon},\gamma)}\leq \varepsilon, \label{martinez} \\
& \|f_{i_\varepsilon,t_\varepsilon}-f\|_{L^1(O_{i_\varepsilon},\gamma)}\leq2^{-i} (1+\|\nabla_H\psi_i\|_{L^\infty(X;H)})^{-1}\varepsilon, \label{daric} \\
& \left|\int_{O_{(i_\varepsilon-1)}}|\nabla_H f_{i_\varepsilon,t_\varepsilon}|_Hd\gamma-|D_\gamma ^Of|(O_{(i_\varepsilon-1)})\right|\leq \varepsilon \label{modric}.
\end{align}
Finally, again from the convergence of $f_{i,t}$ to $f$ in $L(\log L)^{1/2}(O_i,\gamma)$ and \eqref{irati}, for any $i>i_\varepsilon$ there exists $t_{\varepsilon,i}>0$ such that
\begin{align}
& \|f_{i,t_{\varepsilon,i}}-f\|_{L(\log L)^{1/2}(O_{i},\gamma)}\leq 2^{-i}\varepsilon, \label{southgate} \\
& \|f_{i,t_{\varepsilon,i}}-f\|_{L^1(O_{i},\gamma)}\leq 2^{-i}(1+\|\nabla_H\psi_i\|_{L^\infty(X;H)})^{-1}\varepsilon, \label{kane} \\
& \left| \int_{O}\psi_i|\nabla_Hf_{i,t_{\varepsilon,i}}|_Hd\gamma-\int_{O}\psi_id|D_\gamma^Of|\right|\leq 2^{-i}\varepsilon \label{hazard}.
\end{align}
We set
\begin{align}
\label{mbappe}
f_{\varepsilon,i}:=
\begin{cases}
f_{i_\varepsilon,t_\varepsilon}, & i\leq i_\varepsilon, \\
f_{i,t_{\varepsilon,i}}, & i>i_\varepsilon,
\end{cases} \qquad \qquad
f_\varepsilon:=\sum_{i\in\N}\psi_if_{\varepsilon,i}.
\end{align}
$f_\varepsilon$ is well defined since for any $x\in O$ the series is indeed a finite sum (the support of $\psi_i$ is contained in $U_i$ and therefore
the series in $f_\varepsilon(x)$ involves at most two terms).

By Lemma \ref{foglio}, $W^{1,1}(X,\gamma)= \mathbb D^{1,1}(X,\gamma)$. Since $f_{\varepsilon,i}\in \mathbb D^{1,1}(X,\gamma)$ we get  $f_{\varepsilon,i}\in W^{1,1}(X,\gamma)$.
It is clear that  {$f_\varepsilon\in D{\E}_{h}^{O}$} for all $h\in Q X^{*}$ and there exists $\nabla_{H}f_\varepsilon=\sum_{i\in\N}\psi_i\nabla_Hf_{\varepsilon,i}$ which satisfies $\partial_{h}f_\varepsilon= 
[\nabla_{H}f_\varepsilon,h]_H$. So, to prove that
$f_\varepsilon\in W^{1,1}(O,\gamma)$, it suffices to prove that $f_\varepsilon\in L^1(O,\gamma)$ and $\nabla_{H}f_\varepsilon\in L^1 (O,\gamma;H)$.

We stress that ${f_\varepsilon}_{|O^c}\equiv 0$ and $\nabla_H f_\varepsilon$ is well defined $\gamma$-a.e. $x\in X$. It is also worth noticing that ${f_\varepsilon}_{|O_{(i_\varepsilon-1)}}={f_{i_\varepsilon,t_\varepsilon}}_{|O_{(i_\varepsilon-1)}}$. Then,
\begin{align}
\int_O|f_\varepsilon|d\gamma
\leq & \int_O\left|\sum_{i\in\N}\psi_i(f_{\varepsilon,i}-f)\right|d\gamma
+\int_O|f|d\gamma \notag \\
\leq & \int_O\left|\sum_{i=1}^{i_\varepsilon}\psi_i(f_{\varepsilon,i}-f)\right|d\gamma+\int_O\left|\sum_{i=i_\varepsilon+1}^{+\infty}\psi_i(f_{\varepsilon,i}-f)\right|d\gamma
+\|f\|_{L(\log L)^{1/2}(O,\gamma)} \notag \\
\leq & 2\varepsilon+\|f\|_{L(\log L)^{1/2}(O,\gamma)}, \label{kazan}
\end{align}
thanks to \eqref{daric}, \eqref{kane}, \eqref{mbappe} and the fact that ${\rm supp}(\psi_i)\subseteq U_i\subseteq O_i$ for any $i\in\N$. Let us consider $\nabla_H f_\varepsilon$. We have
\begin{align*}
\int_O|\nabla_H f_\varepsilon|_Hd\gamma
\leq & \int_O\left| \sum_{i\in\N}\nabla_H\psi_i f_{\varepsilon,i}\right|_Hd\gamma
+\int_O\left| \sum_{i\in\N}\psi_i \nabla_H f_{\varepsilon,i}\right|_Hd\gamma=:I_1+I_2.
\end{align*}
Let us deal with $I_1$. Since $\sum_{i\in\N}\nabla_H\psi_i\equiv0$, it follows that
\begin{align}
\label{wroclaw}
I_1=\int_O\left| \sum_{i\in\N}\nabla_H\psi_i (f_{\varepsilon,i}-f)\right|_Hd\gamma\leq \varepsilon,
\end{align}
from the definition of $\psi_i$ and of $f_{\varepsilon,i}$, and by applying \eqref{daric} and \eqref{kane}. As far as $I_2$ is concerned, we get
\begin{align*}
I_2
\leq & \int_O \sum_{i=1}^{i_\varepsilon}\psi_i|\nabla_H f_{\varepsilon,i}|_Hd\gamma+\int_O \sum_{i=i_\varepsilon+1}^{+\infty}\psi_i|\nabla_H f_{\varepsilon,i}|_Hd\gamma
=:  J_1+J_2.
\end{align*}
We recall that from the definition $\sum_{i=1}^{i_\varepsilon}\psi_i\equiv1$ on $O_{(i_\varepsilon-1)}$ and from \eqref{mbappe} we have $f_{\varepsilon,i}=f_{i_\varepsilon,t_\varepsilon}$ for $i=1,\ldots,i_\varepsilon$. Hence, \eqref{modric} gives
\begin{align}
\label{mosca}
J_1
= & \int_{O_{(i_\varepsilon-1)}} |\nabla_H f_{i_\varepsilon,t_\varepsilon}|_Hd\gamma\leq |D_\gamma^Of|(O_{(i_\varepsilon-1)})+ \varepsilon.
\end{align}
As far as $J_2$ is concerned, we stress that $\psi_i\equiv0$ on $O_{(i_\varepsilon-1)}$ for any $i\geq i_\varepsilon+1$. Therefore, from \eqref{hazard} we deduce that
\begin{align}
\label{rabat}
J_2
\leq & \sum_{i\in\N}\int_{O\setminus O_{(i_\varepsilon-1)}}\psi_id|D_\gamma^Of|+\varepsilon=|D_\gamma^Of|(O\setminus O_{(i_\varepsilon-1)})+\varepsilon,
\end{align}
and the claim is so proved.

Finally, above computations reveal that $f_\varepsilon\rightarrow f$ in $L^1(O,\gamma)$ and 
$\int_O |\nabla_Hf_\varepsilon|_H d \gamma
\rightarrow |D_\gamma^Of|(O)$ as $\varepsilon\rightarrow0$. Indeed, \eqref{kazan} shows that
\begin{align*}
\int_O|f_\varepsilon-f|d\gamma\leq \varepsilon.
\end{align*}
Moreover, we have
\begin{align*}
\left|\int_O|\nabla_Hf_\varepsilon|_Hd\gamma-|D_\gamma^Of|(O)\right|
\leq & I_1+\Big|J_1-|D_\gamma^Of|(O)\Big|
+\left||D_\gamma^Of|(O_{(i_\varepsilon-1)})-|D_\gamma^Of|(O)|\right|+J_2.
\end{align*}
Therefore, from \eqref{didier}, \eqref{wroclaw}, \eqref{mosca} and \eqref{rabat} we conclude that
\begin{align*}
\left|\int_O|\nabla_Hf_\varepsilon|_Hd\gamma-|D_\gamma^Of|(O)\right|
\leq 5\varepsilon.
\end{align*}
\end{proof} 

The following proposition, which shows that $(3)\Rightarrow (2)$, concludes the proof of Theorem \ref{lauree}.
\begin{pro}
Let $f\in L(\log L)^{1/2}(O,\gamma)$. If $L_\gamma(f,O)<+\infty$ then $V_\gamma(f,O)\leq L_\gamma(f,O)$.
\end{pro}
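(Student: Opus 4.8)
The plan is to reduce the statement to a single pointwise-in-$n$ inequality, namely $V_\gamma(g,O)\le \int_O|\nabla_H g|_H\,d\gamma$ for an arbitrary Sobolev function $g\in W^{1,1}(O,\gamma)$, and then to pass to the limit using the lower semicontinuity of $V_\gamma(\cdot,O)$ with respect to $L^1(O,\gamma)$ convergence, which was recorded right after Definition \ref{variation}.

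First I would fix an admissible competitor for the infimum \eqref{fattoria}, that is a sequence $(f_n)\subseteq W^{1,1}(O,\gamma)$ with $f_n\to f$ in $L^1(O,\gamma)$, and estimate $V_\gamma(f_n,O)$ for each $n$. Let $F={\rm span}\{k_1,\dots,k_m\}\subseteq QX^*$ with $k_1,\dots,k_m$ orthonormal in $H$, and let $G=\sum_{i=1}^m G_i k_i\in{\rm Lip}_c(O,F)$ satisfy $|G(x)|_F\le1$ for every $x\in O$. Since each $G_i\in{\rm Lip}_c(O)$ and $f_n\in W^{1,1}(O,\gamma)$, the integration by parts formula of Remark \ref{lugaresi} gives
\begin{align*}
\int_O f_n\,{\rm div}_\gamma^F G\,d\gamma
=\sum_{i=1}^m\int_O f_n\,\partial_{k_i}^* G_i\,d\gamma
=-\sum_{i=1}^m\int_O \partial_{k_i}f_n\,G_i\,d\gamma.
\end{align*}
Using $\partial_{k_i}f_n=[\nabla_H f_n,k_i]_H$ from the definition of $W^{1,1}(O,\gamma)$, together with the orthonormality of the $k_i$, the integrand collapses to $[\nabla_H f_n,G]_H$, whence by Cauchy--Schwarz and $|G(x)|_F\le1$
\begin{align*}
\int_O f_n\,{\rm div}_\gamma^F G\,d\gamma
=-\int_O[\nabla_H f_n,G]_H\,d\gamma
\le \int_O|\nabla_H f_n|_H\,|G|_F\,d\gamma
\le \int_O|\nabla_H f_n|_H\,d\gamma.
\end{align*}
Taking the supremum over all finite-dimensional $F\subseteq QX^*$ and all such $G$ in the definition \eqref{vista} yields $V_\gamma(f_n,O)\le \int_O|\nabla_H f_n|_H\,d\gamma$ for every $n\in\N$.

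Next I would invoke the lower semicontinuity of $V_\gamma(\cdot,O)$: since $f_n\to f$ in $L^1(O,\gamma)$,
\begin{align*}
V_\gamma(f,O)\le \liminf_{n\to+\infty}V_\gamma(f_n,O)
\le \liminf_{n\to+\infty}\int_O|\nabla_H f_n|_H\,d\gamma.
\end{align*}
As $(f_n)$ was an arbitrary competitor in \eqref{fattoria}, taking the infimum over all admissible sequences on the right-hand side gives $V_\gamma(f,O)\le L_\gamma(f,O)$.

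The argument is essentially bookkeeping once the two ingredients are in hand: the integration by parts formula for $W^{1,1}(O,\gamma)$ functions against ${\rm Lip}_c(O)$ test functions, and the lower semicontinuity of the variation. The only point that deserves a moment of care is the reduction of the finite sum $\sum_i \partial_{k_i}f_n\,G_i$ to the single inner product $[\nabla_H f_n,G]_H$, which relies on choosing $\{k_1,\dots,k_m\}$ orthonormal in $H$; this is precisely what makes the Cauchy--Schwarz estimate bound everything by $|\nabla_H f_n|_H$, and the rest follows directly.
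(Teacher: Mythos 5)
Your proposal is correct and rests on the same two ingredients as the paper's proof: the integration by parts formula of Remark \ref{lugaresi} for $W^{1,1}(O,\gamma)$ functions against ${\rm Lip}_c(O)$ test functions, and a limit passage justified by the boundedness of ${\rm div}_\gamma^F G$. The only difference is organizational --- you bound $V_\gamma(f_n,O)$ for each $n$ and then invoke the lower semicontinuity of $V_\gamma(\cdot,O)$, whereas the paper fixes $G$, passes to the limit in the pairing $\int_O f_n\,{\rm div}_\gamma^F G\,d\gamma$ directly, and then takes the supremum over $G$; since that lower semicontinuity is itself established by exactly this fix-$G$-and-pass-to-the-limit argument, the two routes coincide in substance.
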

\begin{proof}
 Let $f\in L(\log L)^{1/2}(O,\gamma)$ and let us show that $L_\gamma(f,O)<+\infty$ implies $V_\gamma(f,O)\leq  L_\gamma(f,O)$. Since $L_\gamma(f,O)<+\infty$, 
 there exists a sequence $(f_n)\subseteq  W^{1,1}(O,\gamma)$ such that $f_n\rightarrow f$ in $L^1(O,\gamma)$ as $n\rightarrow+\infty$ and 
 \[
 \lim_{n\to+\infty}
\int_O |\nabla_Hf_n|_H d\gamma = L_\gamma(f,O).
 \] 
Further, for any $G\in {\rm Lip}_c(O,F)$ where $F$ is a finite dimensional subspace $QX^*$ and such that $|G(x)|_H\leq 1$ for any $x\in O$, we have ${\rm div}_{\gamma}^F G\in L^\infty(X)$. 
Then thanks to Remark~\ref{lugaresi}, we get
\begin{align*}
\int_Of({\rm div}^F_\gamma G)d\gamma
= & \lim_{n\rightarrow+\infty}\int_Of_n({\rm div}^F_\gamma G)d\gamma
= \lim_{n\rightarrow+\infty}\left(-\int_O[\nabla_Hf_n,G]_Hd\gamma\right)
\leq L_\gamma(f,O).
\end{align*}
Taking the supremum over $G$, we get $V_\gamma(f,O)\leq L_\gamma(f,O)$.
\end{proof}

\section{Further results} 
\label{further}
In this section we collect some consequences of the results of Section \ref{main}. At first, we give a sufficient condition (related to the Ornstein-Uhlenbeck semigroup $(T_t)_{t\geq0}$ introduced in \eqref{bassano}) which ensures that $f\in L(\log L)^{1/2}(O,\gamma)$ belongs to $BV(O,\gamma)$. 
We stress that, differently from \cite[Theorem 4.1]{AmbMirManPal10}, we don't have the equivalence of this condition with those in Theorem \ref{lauree} 
since we are concerning with the semigroup $(T_t)_{t\geq0}$ defined on the whole space $X$. Unfortunately, at the best of our knowledges there is no good definition of 
the Ornstein--Uhlenbeck semigroup on open domains in Wiener spaces and therefore we don't recover the same result of \cite{AmbMirManPal10}. 
For any $f\in L(\log L)^{1/2}(O,\gamma)$ we define the (possible infinite) limit
\begin{align}
\label{butta}
\mathscr J(f,O):=\liminf_{t\downarrow0}\int_O| \nabla_HT_t(\bar f)|_Hd\gamma,
\end{align}
and we show that if $\mathscr J(f,O)<+\infty$, then $f\in BV(O,\gamma)$ and $|D_\gamma^Of|(O)\leq \mathscr I(f,O)$.
\begin{pro}
Let $f\in L(\log L)^{1/2}(O,\gamma)$ and let $\mathscr J(f,O)<+\infty$. Then, $f\in BV(O,\gamma)$ and $|D_\gamma^Of|(O)\leq \mathscr J(f,O)$.
\end{pro}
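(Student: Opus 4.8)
The plan is to deduce everything from the equivalences already established in Theorem~\ref{lauree}: it suffices to prove the single estimate $V_\gamma(f,O)\le \mathscr J(f,O)$. Indeed, once this is shown, the hypothesis $\mathscr J(f,O)<+\infty$ forces $V_\gamma(f,O)<+\infty$, so the implication $(2)\Rightarrow(1)$ of Theorem~\ref{lauree} yields $f\in BV(O,\gamma)$, and the final identity $|D_\gamma^Of|(O)=V_\gamma(f,O)$ of that theorem gives $|D_\gamma^Of|(O)=V_\gamma(f,O)\le \mathscr J(f,O)$, which is exactly the claim.

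To bound $V_\gamma(f,O)$ I would use the smoothed functions $T_t\bar f$ as competitors. Since $\bar f\in L(\log L)^{1/2}(X,\gamma)$, by \cite[Proposition 3.6]{FuHi01} (as already invoked in the proof of Proposition~\ref{cornice}) we have $T_t\bar f\in \mathbb D^{1,1}(X,\gamma)$ for every $t>0$. Hence Lemma~\ref{macchiato} applies to the restriction $(T_t\bar f)_{|O}$, giving $(T_t\bar f)_{|O}\in BV(O,\gamma)$ with weak gradient $D_\gamma^O(T_t\bar f)=\nabla_H(T_t\bar f)\,\gamma\mres O$, so that
\[
|D_\gamma^O(T_t\bar f)|(O)=\int_O|\nabla_HT_t\bar f|_H\,d\gamma.
\]
Combining this with the equality $V_\gamma(g,O)=|D_\gamma^Og|(O)$ valid for every $g\in BV(O,\gamma)$ (Remark~\ref{implicazione_facile}) applied to $g=(T_t\bar f)_{|O}$, I obtain the clean identification
\[
V_\gamma(T_t\bar f,O)=\int_O|\nabla_HT_t\bar f|_H\,d\gamma.
\]

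To finish I would pass to the limit $t\downarrow 0$. By strong continuity of the Ornstein--Uhlenbeck semigroup on $L^1(X,\gamma)$ we have $T_t\bar f\to\bar f$ in $L^1(X,\gamma)$, whence $(T_t\bar f)_{|O}\to f$ in $L^1(O,\gamma)$. Choosing a sequence $t_n\downarrow 0$ realizing the $\liminf$ defining $\mathscr J(f,O)$ and invoking the lower semicontinuity of $V_\gamma(\cdot,O)$ with respect to $L^1(O,\gamma)$ convergence (established right after Definition~\ref{variation}), I get
\[
V_\gamma(f,O)\le \liminf_{n\to+\infty}V_\gamma(T_{t_n}\bar f,O)
=\liminf_{n\to+\infty}\int_O|\nabla_HT_{t_n}\bar f|_H\,d\gamma=\mathscr J(f,O),
\]
which is the desired estimate.

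The argument is essentially a lower-semicontinuity passage, so no single step is genuinely hard; the only points requiring care are that $T_t$ must act on the global null extension $\bar f$ — there being no Ornstein--Uhlenbeck semigroup available directly on $O$ — and that the $\liminf$ in the definition of $\mathscr J$ is precisely what the lower semicontinuity of $V_\gamma(\cdot,O)$ can absorb. This also explains, as anticipated in the introduction to Section~\ref{further}, why this method yields only sufficiency and not the converse implication.
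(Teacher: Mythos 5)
Your proof is correct and follows the same strategy as the paper's: take $T_{t_n}\bar f$ as smooth approximants of $f$ and feed them into the equivalences of Theorem~\ref{lauree}. The only (harmless) difference is that the paper checks condition (3) directly — the restrictions of $T_{t_n}\bar f$ are admissible competitors in the infimum defining $L_\gamma(f,O)$, so $L_\gamma(f,O)\le\mathscr J(f,O)$ at once — whereas you route through condition (2) using Lemma~\ref{macchiato}, Remark~\ref{implicazione_facile} and the lower semicontinuity of $V_\gamma(\cdot,O)$, which is a slightly longer but equally valid path to the same bound.
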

\begin{proof}
Arguing as in the proof of Theorem \ref{cassettiera} it follows that $\overline f\in L({\rm log}L)^{1/2}(X,\gamma)$ and therefore from \cite[Proposition 3.6$(i)$]{Fu00} we infer that $T_t\overline f\in \mathbb D^{1,1}(X,\gamma)$ for any $t>0$. Then, it is enough to apply Theorem \ref{lauree}(3) with $f_n:=T_{t_n}(\bar f)$, being $(t_n)$ be any sequence of positive real numbers which satisfies $t_n\downarrow0^+$ as $n\rightarrow+\infty$.
\end{proof}

Corollary \ref{lasagna} allows us to prove a connection between the variation $V_\gamma^{h}(f,O)$ along $h$ and the one dimensional sections of $f$ 
(see \cite[Chapter 3.11, Theorem 3.103]{AmbFusPal00} for the finite dimensional case and \cite[Theorem 3.10]{AmbMirManPal10} for the Wiener setting). To this aim, 
let us fix $h\in QX^*$ (so $\hat {h}\in X^*$) and let us set $K:={\rm ker} (\hat {h})$. For any open set $\Omega\subseteq X$ and for $\gamma$-a.e. $y\in K$ we introduce 
the set $\Omega_y^h:=\{t\in\R:y+th\in \Omega\}$.
Further, for any function $u:\Omega\rightarrow \R$ we set $u_y:\Omega_y^h\rightarrow \R$ as $u_y(t)=u(y+th)$.
Moreover, we consider the decomposition of $\gamma=\gamma_1\otimes \gamma^\perp_h$, where $\gamma_1=\gamma\circ\pi_h^{-1}$ and 
$\gamma_h^\perp=\gamma\circ(I-\pi_h)^{-1}$ and $\pi_h:X\rightarrow {\rm span}\{h\}$ is the projection on ${\rm span}\{h\}$. 
Finally, for any $f\in L({\rm Log}L)^{1/2}(\Omega,\gamma)$ we define
\begin{align*}
V_{\gamma_1}^h(f_y,\Omega_y^h):=\sup\left\{\int_{\Omega_y^h}f_y(t)(\psi'(t)-t\psi(t))d\gamma_1(t):
\psi\in{\rm Lip}_c(\Omega_y^h), \ |\psi(t)|\leq 1\ \forall t\in \Omega_y^h\right\}.
\end{align*}
We stress that $\psi'(t)-t\psi(t)={\rm div}_{\gamma_1}\psi(t)$ is 
the Gaussian divergence in dimension $1$. 

\begin{pro}
\label{comizio}
Let $O\subseteq X$ be an open set, let $f\in L({\rm log}L)^{1/2}(O,\gamma)$ and let $h\in QX^*$. Then,
\begin{align}
\label{montare}
V_\gamma^{h}(f,O)=\int_{K}V^h_{\gamma_1}(f_y,O_y^h)\gamma^\perp_h(dy).
\end{align}
\end{pro}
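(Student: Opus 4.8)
The plan is to read \eqref{montare} as a one-dimensional sectioning formula and to drive the argument by the disintegration $\gamma=\gamma_1\otimes\gamma_h^\perp$ together with Fubini's theorem. Under the identification of ${\rm span}\{h\}$ with $\R$ built into the statement (for which $\gamma_1$ is the standard one-dimensional Gaussian and $\hat h(y+th)=t$), one computes for $G\in{\rm Lip}_c(O)$ and $y\in K$ that
\[
(\partial_h^*G)(y+th)=G_y'(t)-tG_y(t)={\rm div}_{\gamma_1}G_y(t),
\]
and, since $\operatorname{supp}G\Subset O$, the section $G_y$ lies in ${\rm Lip}_c(O_y^h)$ with $\|G_y\|_\infty\le\|G\|_\infty$. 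Thus each section of an admissible competitor for $V_\gamma^h(f,O)$ is an admissible competitor in the one-dimensional variation. The integrability needed for Fubini is guaranteed because $f\in L(\log L)^{1/2}$, $\hat h\in L^\Psi$ and $G$, $\partial_hG$ are bounded, so $f\,\partial_h^*G\in L^1(O,\gamma)$.

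This immediately yields the easy inequality. For any scalar $G$ with $|G|\le1$,
\[
\int_O f\,\partial_h^*G\,d\gamma=\int_K\Big(\int_{O_y^h}f_y\,(G_y'-tG_y)\,d\gamma_1\Big)\gamma_h^\perp(dy)\le\int_K V^h_{\gamma_1}(f_y,O_y^h)\,\gamma_h^\perp(dy),
\]
and taking the supremum over $G$ gives $V_\gamma^h(f,O)\le\int_K V^h_{\gamma_1}(f_y,O_y^h)\,\gamma_h^\perp(dy)$; measurability of $y\mapsto V^h_{\gamma_1}(f_y,O_y^h)$ follows by restricting the defining supremum to a countable dense family of test functions (and is also a by-product of the disintegration below). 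In particular, if $V_\gamma^h(f,O)=+\infty$ both sides are infinite, so it remains only to prove the reverse inequality when $V_\gamma^h(f,O)<+\infty$.

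In that case Corollary~\ref{lasagna} supplies a finite measure $\mu:=D_\gamma^{O,h}f\in\mathscr M(O)$ with $|\mu|(O)=V_\gamma^h(f,O)$ and $\int_Of\,\partial_h^*G\,d\gamma=-\int_OG\,d\mu$ for all $G\in{\rm Lip}_c(O)$. I would disintegrate $\mu$ along the lines $y+\R h$ through $P:=I-\pi_h:X\to K$, getting a measurable family $(\nu_y)_{y\in K}$ of measures on $O_y^h$ with $|\mu|(O)=\int_K|\nu_y|(O_y^h)\,\gamma_h^\perp(dy)$. The crux is to identify $\nu_y$ with the one-dimensional Gaussian derivative of $f_y$. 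Testing the integration-by-parts identity against product functions $G(y+th)=\beta(y)\psi(t)$ with $\psi\in{\rm Lip}_c(I)$, $\beta\in{\rm Lip}_c(C)$ and $I\times C\Subset O$, Fubini on the left and the disintegration on the right give
\[
\int_C\beta(y)\Big(\int_I f_y\,(\psi'-t\psi)\,d\gamma_1\Big)\gamma_h^\perp(dy)=-\int_C\beta(y)\Big(\int_I\psi\,d\nu_y\Big)(P_\#|\mu|)(dy).
\]
Comparing the two reference measures forces $P_\#|\mu|\ll\gamma_h^\perp$ (a singular part would, against the unit-mass fibre measures, annihilate something nonzero), after which the arbitrariness of $\beta$ yields, for $\gamma_h^\perp$-a.e.\ $y$, the identity $\int_I f_y(\psi'-t\psi)\,d\gamma_1=-\int\psi\,d\nu_y$. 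Running $\psi$ through a countable dense family and exhausting $O$ by countably many boxes $I\times C\Subset O$, linearity recovers $\int_{O_y^h}f_y\,{\rm div}_{\gamma_1}\psi\,d\gamma_1=-\int\psi\,d\nu_y$ for every $\psi\in{\rm Lip}_c(O_y^h)$, for $\gamma_h^\perp$-a.e.\ $y$. The scalar case \eqref{corsaro} of Lemma~\ref{banana} then gives $V^h_{\gamma_1}(f_y,O_y^h)=|\nu_y|(O_y^h)$ a.e., and integrating against $\gamma_h^\perp$ produces $\int_K V^h_{\gamma_1}(f_y,O_y^h)\,\gamma_h^\perp(dy)=|\mu|(O)=V_\gamma^h(f,O)$, closing the argument.

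I expect the main obstacle to be exactly this identification step, and it has two faces: first, the support constraint $\operatorname{supp}G\Subset O$ means the only product test functions at one's disposal are $\beta\psi$ supported in boxes $I\times C\Subset O$, so one must exhaust $O$ by such boxes and use a partition/linearity argument to reach arbitrary $\psi\in{\rm Lip}_c(O_y^h)$; second, one must reconcile the two reference measures $\gamma_h^\perp$ and $P_\#|\mu|$ appearing in the disintegration, which is precisely where absolute continuity of the $K$-marginal of $|\mu|$ has to be extracted from the identity above. Everything else — the section computation, Fubini, and the one-dimensional total-variation formula — is routine.
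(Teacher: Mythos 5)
Your proof of the inequality $\leq$ in \eqref{montare} coincides with the paper's (Fubini applied to sections of admissible test functions), but for the converse you take a genuinely different route. The paper never disintegrates the derivative measure: it fixes $\varepsilon$ with $|D_\gamma^{O,h}f|(\partial O_{-\varepsilon})=0$, multiplies by a cut-off $g_\varepsilon\equiv1$ on $O_{-\varepsilon}$, regularizes with the Ornstein--Uhlenbeck semigroup $f_t=T_t(g_\varepsilon\bar f)$, and combines three ingredients: for Sobolev functions the sectioning formula is an exact identity by Fubini; Lemma \ref{esami} gives $|D_\gamma^{X,h}f_t|(O_{-\varepsilon})\to|D_\gamma^{O,h}f|(O_{-\varepsilon})$; and lower semicontinuity of $V^h_{\gamma_1}$ along a subsequence whose sections converge in $L^1(\gamma_1)$ for $\gamma_h^\perp$-a.e.\ $y$, together with Fatou, transfers the estimate to $f$ on $O_{-\varepsilon}$, after which one lets $\varepsilon\to0$. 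Your argument instead disintegrates $D_\gamma^{O,h}f$ over $P=I-\pi_h$ and identifies the fibre measures with the one-dimensional Gaussian derivatives of the sections; this yields the equality in one stroke and is closer in spirit to the classical finite-dimensional proof, but it rests on two points that you correctly flag and that must be handled with care: (i) the absolute continuity $P_\#|D_\gamma^{O,h}f|\ll\gamma_h^\perp$ --- note that you write the disintegration against $\gamma_h^\perp$ before this is established, so one should first disintegrate against $P_\#|D_\gamma^{O,h}f|$ and only then run your Lebesgue-decomposition argument against the unit-mass fibre measures, which does work once $O$ is covered by countably many boxes $I\times C\Subset O$ (possible since $\R\times K$ is separable metric); and (ii) the passage from product test functions on boxes to arbitrary $\psi\in{\rm Lip}_c(O_y^h)$, where the ``countable dense family'' must be dense in a sense strong enough to control $\psi'$ and not only $\psi$ (e.g.\ piecewise-linear interpolants with rational nodes, whose derivatives converge $\gamma_1$-a.e.\ with a uniform Lipschitz bound), because the left-hand side of the sectional identity involves $\psi'$ and uniform density alone does not suffice. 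The paper's route trades this measure-theoretic work for reliance on the semigroup results of \cite{AmbMirManPal10}; both approaches are valid.
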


\begin{proof}
At first, we remark that the inequality $\leq$ in \eqref{montare} easily follows from Fubini's theorem. Indeed,
\begin{align*}
\int_Of(x)\partial_h^*Gd\gamma=
\int_K \int_{O_y^h}f_y(t)(G'(t)-tG(t))d\gamma_1(t)\gamma^\perp_h(dy)
\leq \int_KV_{\gamma_1}^h(f_y,O_y^h)\gamma^\perp_h(dy),
\end{align*}
for any admissible function $G$. Taking the supremum over $G$ we get the desired inequality.

In order to prove the converse one, we use both an approximation and a smoothing argument. Since $|D_\gamma^{O,h}f|(O)<+\infty$, 
from Remark \ref{misura bordo} there exists at most countably many indexes $\varepsilon\in(0,1)$ such that $|D_\gamma^{O,h}f|(\partial O_{-\varepsilon})\neq 0$. 

Let $\varepsilon\in(0,1)$ be such that $|D_\gamma^{O,h}f|(\partial O_{-\varepsilon})=0$, we introduce $g_{\varepsilon}$ such that $g_{\varepsilon} \in {\rm Lip}_c(O)$ and
$g_{\varepsilon|O_{-\varepsilon}} \equiv 1$.
For $t>0$ let use define $f_t:=T_t(g_{\varepsilon}\bar f)$ where $T_t$ is the Ornstein-Uhlenbeck semigroup in $L^1(X,\gamma)$, which is a strongly continuous semigroup, therefore
$f_{t|O_{-\varepsilon}}$ converges to $fg_{\varepsilon|O_{-\varepsilon}}$ in $L^1$ for $t\rightarrow 0$.
{$f_t$ is in $\mathbb{D}^{1,p}(X,\gamma)=W^{1,p}(X,\gamma)$, hence it is in $BV(X,\gamma)$. Arguing as in the first part of the proof of Theorem \ref{cassettiera},}
we have $g_{\varepsilon}\bar f \in BV(X,\gamma)$, and from Lemma \ref{esami} it follows that 
$|D_\gamma^{X,h} f_t|(O_{-\varepsilon})\rightarrow|D_\gamma^{X,h} (g_{\varepsilon}\bar f)|(O_{-\varepsilon})$ as $t\rightarrow0$. 
From Corollaries \ref{spaghetto} and  \ref{lasagna} we deduce that $|D_\gamma^{X,h}f_t|(O_{-\varepsilon})$ converges 
to $|D_\gamma^{O,h}f|(O_{-\varepsilon})=|D_\gamma^{X,h} (g_{\varepsilon}\bar f)|(O_{-\varepsilon})$ as $t\rightarrow0$. 
Moreover, since
\begin{align*}
\int_{K}\|(f_t)_y-(f)_y\|_{L^1((O_{-\varepsilon})_y^h,\gamma_1)}\gamma^\perp_h(dy)=\int_{O_{-\varepsilon}}|f_t-f|d\gamma\rightarrow0, \qquad t\rightarrow0,
\end{align*}
there exists a sequence $(t_n)$ decreasing to $0$ as $n\rightarrow+\infty$ such that
\begin{align*}
\int_{(O_{-\varepsilon})_y^h}|(f_{n})_y-(f)_y|d\gamma_1\rightarrow0, \qquad n\rightarrow+\infty,
\end{align*}
for $\gamma^\perp_h$-a.e. $y\in K$, where $f_n:=f_{t_n}$ for any $n\in\N$. Hence, the lower semicontinuity of $V^h_{\gamma_1}$, Fatou's Lemma, the convergence of $(|D_\gamma^{X,h} f_n|(O_{-\varepsilon}))$ and Corollary \ref{lasagna} imply that
\begin{align}
\int_{K}V_{\gamma_1}^h(f_y,(O_{-\varepsilon})_y^h)\gamma^\perp_h(dy)
\leq & \liminf_{n\rightarrow+\infty}\int_{K}V_{\gamma_1}^h((f_n)_y,(O_{-\varepsilon})_y^h)\gamma^\perp_h(dy)=\notag  \\
= & \liminf_{n\rightarrow+\infty} |D_\gamma^{X,h}f_n|(O_{-\varepsilon})
=  |D_\gamma^{O,h}f|(O_{-\varepsilon})\leq V_\gamma^{h}(f,O). 
\label{treno}
\end{align}
Letting $\varepsilon\rightarrow0$ in \eqref{treno} we conclude.
\end{proof}

\section{ \texorpdfstring{$BV$}{} functions on domains in Hilbert spaces}
\label{general_measure}
In this section we show that the arguments in the proof of Theorem \ref{cassettiera} allow us to prove a different characterization of $BV$ functions on open domains in Hilbert spaces with respect to more general probability measures. In particular, we consider the setting of \cite{DaPLun18}, and we recall the main definitions and results. Let $X$ be a separable Hilbert space with inner product $\langle\cdot,\cdot \rangle$, let $\nu$ be a Borel probability measure on $X$ and let $R\in\mathcal L(X)$ be such that the following hypothesis is satisfied.
\begin{hyp}
For any $z\in X$ there exists ${\mathit v}_z\in\bigcap_{1\leq p<\infty}L^p(X,\nu)$ such that
\begin{align}
\label{ingegneria}
\int_X\langle R D\varphi,z\rangle d\nu=\int_X\varphi {\it v}_zd\nu, \quad \varphi\in C^1_b(X).
\end{align}
Here, $D$ denotes the Fr\'echet derivative of $\varphi\in C^1_b(X)$.
\label{frutteto}
\end{hyp}
In particular, it follows that the map $z\mapsto {\it v}_z$ is closed from $X$ to $L^p(X,\nu)$ for any $p\geq1$, and therefore it is continuous.
Hence, there exists a positive constant $C_p$ such that
\begin{align*}
\|{\it v}_z\|_{L^p(X,\nu)}\leq C_p\|z\|, \quad z\in X, \ p\geq 1.
\end{align*}
Further, since a crucial tool of our investigation is the space of Lipschitz functions, we need an additional hypothesis.
\begin{hyp}
\label{cavovga}
For any $G\in {\rm Lip}(X)$ there exists a subset $N=N_G\subseteq X$ such that $\nu(N)=0$ and $G$ is G\^ateaux differentiable on $X\setminus N$.
\end{hyp}

From \cite[Proposition 2.3]{DaPLun18} we inherit the following result.
\begin{pro}
The operator $RD:D(RD):=C^1_b(X)\rightarrow L^p(X,\nu;X)$ is closable in $L^p(X,\nu)$ and we denote by $M_p$ and $\mathbb D^{1,p}(X,\nu)$ its closure and the domain of its closure, respectively. In particular, if $p>1$ for any $f\in \mathbb D^{1,p}(X,\nu)$ and $z\in X$ it holds that
\begin{align}
\label{finalsix}
\int_X\langle M_pf,z\rangle d\nu=\int_Xf {\it v}_zd\nu.
\end{align}
\end{pro}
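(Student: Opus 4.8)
The plan is to establish the two assertions separately: closability of $RD$ on $C^1_b(X)$ as an operator with values in $L^p(X,\nu;X)$, and then the integration by parts identity \eqref{finalsix} for the closure when $p>1$. Throughout, the only inputs are the boundedness and linearity of $R$, the integration by parts in Hypothesis \ref{frutteto}, and the integrability ${\it v}_z\in\bigcap_{1\le q<\infty}L^q(X,\nu)$.

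For closability (with $1<p<\infty$) I would take a sequence $(\varphi_n)\subseteq C^1_b(X)$ with $\varphi_n\to 0$ in $L^p(X,\nu)$ and $RD\varphi_n\to g$ in $L^p(X,\nu;X)$, and show $g=0$ $\nu$-a.e. Applying \eqref{ingegneria} directly and letting $n\to\infty$ only gives $\int_X\langle g,z\rangle\,d\nu=0$ for every $z\in X$, which controls merely the $\nu$-average of each directional component of $g$. To localize this I would test against products: for fixed $\psi\in C^1_b(X)$ one has $\varphi_n\psi\in C^1_b(X)$ and, by the Leibniz rule and linearity of $R$, $RD(\varphi_n\psi)=\psi\,RD\varphi_n+\varphi_n\,RD\psi$. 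Writing \eqref{ingegneria} for $\varphi_n\psi$ and passing to the limit --- the term $\int_X\varphi_n\langle RD\psi,z\rangle\,d\nu$ vanishes since $RD\psi$ is bounded and $\varphi_n\to 0$ in $L^p$, and the right-hand side $\int_X\varphi_n\psi{\it v}_z\,d\nu$ vanishes by H\"older since $\psi{\it v}_z\in L^{p'}(X,\nu)$ --- gives
\[
\int_X\psi\,\langle g,z\rangle\,d\nu=0,\qquad\forall\,\psi\in C^1_b(X),\ \forall\,z\in X.
\]
Since $C^1_b(X)$ is dense in $L^{p'}(X,\nu)$ and $\langle g,z\rangle\in L^p(X,\nu)=(L^{p'}(X,\nu))^*$, this forces $\langle g,z\rangle=0$ $\nu$-a.e.; letting $z$ run over a countable dense subset of the separable space $X$ yields $g=0$ $\nu$-a.e., which is exactly the closability of $RD$.

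The identity \eqref{finalsix} then follows by approximation. Given $f\in\mathbb D^{1,p}(X,\nu)$ with $p>1$, choose $(\varphi_n)\subseteq C^1_b(X)$ with $\varphi_n\to f$ in $L^p(X,\nu)$ and $RD\varphi_n\to M_pf$ in $L^p(X,\nu;X)$. Writing \eqref{ingegneria} for each $\varphi_n$ and letting $n\to\infty$, the left-hand side tends to $\int_X\langle M_pf,z\rangle\,d\nu$ because $\langle RD\varphi_n,z\rangle\to\langle M_pf,z\rangle$ in $L^p$, while the right-hand side tends to $\int_Xf{\it v}_z\,d\nu$ by H\"older, using ${\it v}_z\in L^{p'}(X,\nu)$.

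I expect the closability step, rather than \eqref{finalsix}, to be the main obstacle: the bare identity \eqref{ingegneria} is too weak, as it only pins down the $\nu$-mean of $\langle g,z\rangle$, and the decisive idea is the multiplication by an arbitrary $\psi\in C^1_b(X)$ together with the Leibniz rule, which upgrades ``zero mean in every direction'' to ``zero almost everywhere''. The hypothesis $p>1$ enters precisely in the H\"older estimates for $\psi{\it v}_z$ and $f{\it v}_z$, where the conjugate exponent $p'$ must be finite so that ${\it v}_z\in L^{p'}(X,\nu)$; this is the only place where the full strength of ${\it v}_z\in\bigcap_{1\le q<\infty}L^q(X,\nu)$ is used.
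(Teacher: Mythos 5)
The paper does not actually prove this proposition: it is imported verbatim from \cite[Proposition 2.3]{DaPLun18}, so there is no internal proof to compare against. Your argument is the standard one for this kind of statement (test \eqref{ingegneria} on the products $\varphi_n\psi$, use the Leibniz rule to isolate $\int_X\psi\langle g,z\rangle\,d\nu$, then use density of $C^1_b(X)$ in $L^{p'}(X,\nu)$ and separability of $X$), and both the closability step and the passage to the limit giving \eqref{finalsix} are correct as written for $1<p<\infty$.

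The genuine gap is the case $p=1$, which you silently exclude by writing ``for closability (with $1<p<\infty$)'' but which the statement does cover: the phrase ``in particular, if $p>1$'' refers only to \eqref{finalsix}, and the paper subsequently uses $M_pf$ ``for any $p\geq 1$'' and the space $\mathbb D^{1,1}(X,\nu)$, so closability in $L^1(X,\nu)$ is needed. Your argument breaks down there at exactly the point you identify as the crux: if $\varphi_n\to 0$ only in $L^1(X,\nu)$, the term $\int_X\varphi_n\psi\,{\it v}_z\,d\nu$ need not vanish, because the conjugate exponent is $p'=\infty$ and Hypothesis \ref{frutteto} does not give ${\it v}_z\in L^\infty(X,\nu)$. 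The standard repair is to replace $\varphi_n$ by $\theta(\varphi_n)$ for a fixed $\theta\in C^1_b(\R)$ with $\theta(0)=0$, $\theta'(0)=1$, $|\theta|$ bounded and $|\theta'|\leq 1$ (e.g.\ $\theta=\arctan$): then $\theta(\varphi_n)\psi\in C^1_b(X)$, the sequence $\theta(\varphi_n)$ is uniformly bounded and tends to $0$ in $L^1$, hence in every $L^q$ with $q<\infty$, so $\int_X\theta(\varphi_n)\psi\,{\it v}_z\,d\nu\to 0$; and along a subsequence where $\varphi_n\to 0$ $\nu$-a.e.\ one has $RD(\theta(\varphi_n))=\theta'(\varphi_n)RD\varphi_n\to g$ in $L^1(X,\nu;X)$, since $\theta'(\varphi_n)(RD\varphi_n-g)\to 0$ by $|\theta'|\leq 1$ and $(\theta'(\varphi_n)-1)g\to 0$ by dominated convergence. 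With this modification the rest of your argument goes through unchanged and yields $g=0$ also for $p=1$.
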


For any $p<\infty$ we denote by $M_p^*:D(M_p^*)\subseteq L^{p'}(X,\nu;X)\rightarrow L^{p'}(X,\nu)$ the adjoint operator of $M_p$. 
Hence, for any $f\in \mathbb D^{1,p}(X,\nu)$ and any $F\in D(M_p^*)$ we have
\begin{align*}
\int_XM_p^*Ffd\nu=\int_X\langle M_pf,F\rangle d\nu.
\end{align*}
To simplify the notations, for functions $f\in \cup_{p\geq 1}\mathbb D^{1,p}(X,\nu)$ we set $Mf:=M_pf$ for any $p\geq1$, and for vector fields $F\in\cup_{p>1}D(M_p^*)$ 
we set $M^*F:=M_p^*F$ for any $p>1$.

If $f\in \mathbb D^{1,p}(X,\nu)$ and $\varphi\in {\rm Lip}_b(X)$, then $(f\varphi)\in \mathbb D^{1,p}(X,\nu)$ and $M_p(f\varphi)=\varphi M_pf+fRD\varphi$,
where $D$ is the G\^ateaux derivative of $\varphi$. Hence, if $p>1$ from \eqref{finalsix} it follows that
\begin{align}
\int_X \varphi\langle M_p f,z\rangle d\nu=-\int_Xf(\langle RD \varphi,z\rangle-\varphi{\it v}_z)d\nu, \quad z\in X.
\label{vernacoliere}
\end{align}
This yields that $M_p^* (\varphi)(z)=-\langle RD \varphi,z\rangle+\varphi{\it v}_z$.
We stress that for $p=1$ the right-hand side of \eqref{vernacoliere} is not meaningful, in general, since nothing ensures that $f{\it v}_z\in L^1(X,\nu)$.
However, formula \eqref{vernacoliere} is the starting point for the definition of $BV$ functions in $(X,\nu)$ given in \cite[Definition 3.1]{DaPLun18}.

\begin{defn}
Let $f\in L^1(X,\nu)$ be such that $f{\it v}_z\in L^1(X,\nu)$ for any $z\in X$. We say that $u\in BV(X,\nu)$ if there exists an $X$-valued Borel measure ${\it m}\in\mathscr M(X,X)$ such that, setting ${\it m}_z(B):=\langle {\it m}(B),z\rangle$ for any $z\in X$ and any $B\in \mathcal B(X)$, we have
\begin{align}
\label{BV_X_Hilbert}
\int_Xf(\langle RD\varphi,z\rangle -\varphi {\it v}_z)d\nu=-\int_X\varphi d{\it m}_z, \quad \varphi\in C^1_b(X).
\end{align}
We also denote $m$ by $D_\nu f$ to stress its dependence on $\nu$ and $f$.
\end{defn}

Further, we introduce the variation along $z\in X$ of $f\in L^1(X,\nu)$.
\begin{defn}
Let $z\in X$ and let $f\in L^1(X,\nu)$ such that $f{\it v}_z\in L^1(X,\nu)$. Then, we define the variation of $f$ along $z$ by
\begin{align*}
V_\nu^z(f):=\sup\left\{\int_Xf(\langle RD\varphi,z\rangle-\varphi{\it v}_z)d\nu:\varphi\in C^1_b(X), \|\varphi\|_\infty\leq 1\right\}.
\end{align*}

Assume that $f{\it v}_z\in L^1(X,\nu)$ for any $z\in X$. Then, we define the variation of $f$ by
\begin{align*}
V_\nu(f):=\sup\left\{\int_XfM^*\varphi d\nu:\varphi\in C^1_b(X,F), \ F\subseteq X {\textrm { fin. dim.}}, \ \|\varphi\|_\infty\leq 1\right\}.
\end{align*}
It is easy to see that, if $\{e_1,\ldots,e_k\}$ is an orthonormal basis of $F$, then 
\[
M^*\varphi=-\sum_{i=1}^k\langle RD\varphi,e_i\rangle-\varphi v_{e_i}
\] 
for any $\varphi \in C_b^1(X,F)$.
\end{defn}

Let $O\subseteq X$ be an open set. We provide the definition of $BV$ functions and of the variation of a function on $O$.

\begin{defn}
Let $f\in L^1(O,\nu)$ such that $f{\it v}_z\in L^1(O,\nu)$ for any $z\in X$. We say that $f\in BV(O,\nu)$ if there exists an $X$-valued Borel measure $\mu\in\mathscr M(O,X)$ such that, setting $\mu_z(B):=\langle{ \mu}(B),z\rangle$ for any $z\in X$ and any $B\in \mathcal B(O)$, we have
\begin{align}
\label{scrivania}
\int_Of(\langle R\nabla \varphi,z\rangle -\varphi {\it v}_z)d\nu=-
\int_O\varphi d{\mu}_z, \quad \varphi\in {\rm Lip}_c(O),
\end{align}
where $\nabla$ denotes the G\^ateaux derivative. We also denote $\mu$ by $D_\nu^Of$ to stress its dependence on $\nu,O$ and $f$.
\end{defn}

\begin{defn}
Let $z\in X$ and let $f\in L^1(O,\nu)$ such that $f{\it v}_z\in L^1(O,\nu)$. Then, we define the variation of $f$ along $z$ by
\begin{align*}
V_\nu^z(f,O):=\sup\left\{\int_Of(\langle R\nabla \varphi,z\rangle-\varphi{\it v}_z)d\nu:\varphi\in {\rm Lip}_c(O), \ \|\varphi\|_\infty\leq 1\right\}.
\end{align*}
assume that $f{\it v}_z\in L^1(O,\nu)$ for any $z\in X$. We define the variation of $f$ in $O$ by
\begin{align*}
V_\nu(f,O):=\sup\left\{\int_OfM^{*,F}\varphi d\nu:\varphi\in {\rm Lip}_c(O,F), \ F\subseteq X {\textrm { fin. dim.}}, \ \|\varphi\|_\infty\leq 1\right\},
\end{align*}
where 
\begin{align}
\label{div_fin_dim_hilbert}
M^{*,F} \varphi:=\sum_{j=1}^m \langle RD\varphi,e_j\rangle -\varphi {\it v}_{e_j},
\end{align}
and $\{e_1,\ldots,e_m\}$ is an orthonormal basis of $F$.
\end{defn}

\begin{lemma}
If $O=X$, then:
\begin{enumerate}
 \item \eqref{scrivania} implies \eqref{BV_X_Hilbert} for any $z\in X$ and $f\in L^p(X,\nu)$ with  $p\in[1,+\infty)$;
 
 \item \eqref{BV_X_Hilbert} implies  \eqref{scrivania} for any $z\in X$ and $f\in L^p(X,\nu)$ with  $p\in(1,+\infty)$.
\end{enumerate}

\end{lemma}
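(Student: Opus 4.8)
The plan is to read both implications as changes in the class of admissible test functions, keeping the representing measure unchanged (so that $m=\mu$), exactly in the spirit of Lemma \ref{tavolo}. In each direction I would take a test function of the target class, approximate it by test functions of the class for which the integration by parts identity is assumed, apply that identity, and pass to the limit. The delicate point is always the convergence of the term carrying the derivative, and it is this term that forces the restriction $p>1$ in the second implication.

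For (1), assume \eqref{scrivania} and fix $\varphi\in C^1_b(X)$. Since $\varphi$ is bounded and Lipschitz but carries no support condition, I would truncate it: pick Lipschitz cut-offs $\theta_n$ with $\theta_n\equiv1$ on $B(n)$, $\theta_n\equiv0$ off $B(n+1)$ and $\sup_n\|\nabla\theta_n\|_\infty<+\infty$ (e.g. $\theta_n(x)=\zeta(\|x\|-n)$ for a fixed Lipschitz $\zeta\colon\R\to[0,1]$ equal to $1$ on $(-\infty,0]$ and to $0$ on $[1,+\infty)$), and set $\varphi_n:=\theta_n\varphi\in {\rm Lip}_c(X)$. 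Applying \eqref{scrivania} to $\varphi_n$ and using the Leibniz rule $\nabla\varphi_n=\theta_n D\varphi+\varphi\nabla\theta_n$ (here $\nabla\varphi=D\varphi$ since $\varphi\in C^1_b(X)$), the left-hand side splits into a main term $\int_X f\,\theta_n(\langle RD\varphi,z\rangle-\varphi{\it v}_z)\,d\nu$ and an error term $\int_X f\,\varphi\,\langle R\nabla\theta_n,z\rangle\,d\nu$. The main term converges to $\int_X f(\langle RD\varphi,z\rangle-\varphi{\it v}_z)\,d\nu$ by dominated convergence, using $f\in L^1(X,\nu)$, $f{\it v}_z\in L^1(X,\nu)$ and the boundedness of $\langle RD\varphi,z\rangle$ and $\varphi$; the error term is bounded by $\big(\sup_n\|\nabla\theta_n\|_\infty\big)\|R\|\,\|z\|\,\|\varphi\|_\infty\int_{X\setminus B(n)}|f|\,d\nu$, because $\nabla\theta_n$ is supported in $B(n+1)\setminus B(n)$, and this tends to $0$ since $f\in L^1(X,\nu)$. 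The right-hand side converges by dominated convergence against the finite measure $\mu_z$. This yields \eqref{BV_X_Hilbert} with $m=\mu$, and since only $f,f{\it v}_z\in L^1(X,\nu)$ are used, it holds for every $p\in[1,+\infty)$.

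For (2), assume \eqref{BV_X_Hilbert} with $p\in(1,+\infty)$ and fix $\varphi\in {\rm Lip}_c(X)$. The plan is to produce a sequence $(\varphi_n)\subseteq C^1_b(X)$ with $\varphi_n\to\varphi$ pointwise, $\sup_n(\|\varphi_n\|_\infty+{\rm Lip}(\varphi_n))<+\infty$, and $D\varphi_n\to\nabla\varphi$ $\nu$-a.e. Granting this, I would apply \eqref{BV_X_Hilbert} to each $\varphi_n$ and pass to the limit: the term $\int_X\varphi_n\,dm_z$ converges by dominated convergence (finite measure, uniform bound); the term $\int_X f\,\varphi_n{\it v}_z\,d\nu$ converges by dominated convergence, dominated by $\sup_n\|\varphi_n\|_\infty\,|f{\it v}_z|\in L^1(X,\nu)$; and the derivative term $\int_X f\,\langle RD\varphi_n,z\rangle\,d\nu$ converges by H\"older's inequality, because $f\in L^p(X,\nu)$ while the uniform bound together with the $\nu$-a.e. convergence and the finiteness of $\nu$ give $\langle RD\varphi_n,z\rangle\to\langle R\nabla\varphi,z\rangle$ in $L^{p'}(X,\nu)$ (dominated convergence, $p'<+\infty$ being the conjugate exponent of $p$). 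This produces \eqref{scrivania} with $\mu=m$. The restriction $p>1$ is exactly what makes the derivative term go through: for $p=1$ one would need $L^\infty$-convergence of $\langle RD\varphi_n,z\rangle$, which a smoothing of a merely Lipschitz function does not provide.

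The main obstacle is the approximation step in (2), i.e. the construction of the $C^1_b$-regularizations with $\nu$-a.e. convergent derivatives; this is precisely where Hypothesis \ref{cavovga} is needed. On the Hilbert space $X$ I would obtain $C^1_b$ (in fact $C^{1,1}$) functions $\varphi_n$ converging uniformly to $\varphi$ with ${\rm Lip}(\varphi_n)\le{\rm Lip}(\varphi)$ by a Lasry--Lions type inf-sup-convolution; note that $C^1_b(X)$ imposes no support condition, so the spreading of supports under regularization is harmless. The genuinely technical point, which I expect to be the heart of the argument, is to upgrade uniform convergence to $\nu$-a.e. convergence of the gradients $D\varphi_n$ to the G\^ateaux gradient $\nabla\varphi$: Hypothesis \ref{cavovga} guarantees that $\nabla\varphi$ exists $\nu$-a.e. with $|\nabla\varphi|\le{\rm Lip}(\varphi)$, and one must verify that the chosen regularization respects this almost-everywhere differentiability. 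Once this is secured, the uniform bound reduces the required $L^{p'}$-convergence to an application of dominated convergence, and both implications follow.
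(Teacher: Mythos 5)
Your part (1) is correct and coincides with the paper's argument: the paper disposes of this implication by referring to the first half of the proof of Lemma \ref{tavolo}, which is exactly your truncation by Lipschitz cut-offs followed by dominated convergence (with the cut-off error term killed by the integrability of $f$).

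Part (2) has a genuine gap, and it is exactly the step you defer at the end: the $\nu$-a.e. convergence of the gradients $D\varphi_n$ of your regularizations to the G\^ateaux gradient $\nabla\varphi$. Hypothesis \ref{cavovga} only guarantees G\^ateaux differentiability of $\varphi$ outside a $\nu$-null set, whereas the gradients of Lasry--Lions (inf-sup convolution) regularizations are controlled at points of \emph{Fr\'echet} differentiability; in infinite dimensions there is no Rademacher-type theorem upgrading $\nu$-a.e.\ G\^ateaux differentiability to $\nu$-a.e.\ Fr\'echet differentiability, so the pointwise convergence of $D\varphi_n$ that your dominated-convergence argument needs is not available from the stated hypotheses. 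Since handling merely Lipschitz test functions is the entire content of the implication, this cannot be treated as a routine verification: as written, the derivative term $\int_X f\langle RD\varphi_n,z\rangle\,d\nu$ cannot be passed to the limit.

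The paper circumvents this difficulty rather than solving it. It approximates $G\in{\rm Lip}_c(X)$ by finite-dimensional mollifications $G_{n,k}\in\mathcal{F}C_b^1(X)$ satisfying $\|G_{n,k}\|_\infty\leq\|G\|_\infty$ and $\|DG_{n,k}\|_{L^\infty(X,X)}\leq\|\nabla G\|_{L^\infty(X,X)}$, and then uses only \emph{weak} compactness: the uniform Lipschitz bound makes $(RDG_{n,k})$ bounded in $L^{p'}(X,\nu;X)$, hence weakly convergent along a subsequence to some $\Psi$, and $\Psi$ is identified with $R\nabla G$ by testing against $g\in C^1_b(X)$ through the integration-by-parts formula \eqref{vernacoliere} and the equality of the resulting measures $\langle R\nabla G,z\rangle\nu$ and $\langle\Psi,z\rangle\nu$. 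Pairing the weak convergence in $L^{p'}$ with $f\in L^p$ then yields the limit of the derivative term, with no pointwise convergence of gradients ever required; this is also where the restriction $p>1$ is actually used. Replacing your a.e.-convergence step by this weak-compactness-plus-identification argument repairs the proof; the remaining limits in your write-up (the measure term and the $v_z$ term) are handled correctly.
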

\begin{proof}
 
To prove that \eqref{scrivania} implies \eqref{BV_X_Hilbert}, it is enough to argue as in the first part of the proof of Lemma \ref{tavolo}.

Let  $f\in L^p(X,\nu)$ for some $p>1$, we prove that \eqref{BV_X_Hilbert} gives \eqref{scrivania}. Let $G\in{\rm Lip}_c(X)$, let $\{e_k:k\in\N\}$ be an orthonormal basis 
of $X$ and let $G\in{\rm Lip}_c(X)$. Similarly to what is done in the proof of \cite[Lemma 2.1$(ii)$]{DaPLun18}, for any $n,k\in\N$ we set
\begin{align*}
G_{n,k}(x):=\int_{\R^n}G\left(P_nx+\frac1k\sum_{j=1}^n\xi_je_j\right)\rho_n(\xi)d\xi,
\end{align*}
where $P_nx:=\sum_{j=1}^n\langle x,e_j\rangle e_j$ for any $x\in X$ and $\rho_n$ is any nonnegative smooth function supported in the unit ball of $\R^n$ such that
$\int_{\R^n}\rho_n(\xi)d\xi=1$. We infer that $G_{n,k}\in\mathcal FC_b^1(X)$ and
\begin{align*}
& \lim_{k\rightarrow+\infty}\lim_{n\rightarrow+\infty}G_{n,k}(x)=G(x), \ \forall x\in X, \quad \|G_{n,k}\|_\infty\leq \|G\|_\infty, \\ 
& \|D G_{n,k}\|_{L^\infty(X,X)}\leq \|\nabla G\|_{L^\infty(X,X)}, \quad n,k\in\N.
\end{align*}
Then, $(G_{n,k})$ is bounded in $\mathbb  D^{1,p'}(X,\nu)$, and it follows that, up to a subsequence, 
$RDG_{n,k}$ weakly converges to a function $\Psi \in L^{p'}(X,\nu;X)$. By \cite[Chapter IV, Theorem 1.1]{DieUhl77} $L^{p'}(X,\nu;X)$ is the dual of $L^{p}(X,\nu;X)$,
hence, for any $z\in X$ and any $g\in C_b^1(X)$ we have
\begin{align*}
\int_Xg\langle R\nabla G, z\rangle d\nu
= &  \int_X(\langle M_pg,z\rangle-g{\it v}_z)Gd\nu
= \lim_{k,n\rightarrow+\infty} \int_X(\langle M_pg,z\rangle-g{\it v}_z)G_{k,n}) d\nu \\
= & \lim_{k,n\rightarrow+\infty} \int_Xg\langle RDG_{k,n},z\rangle  d\nu
= \int_Xg\langle \Psi,z\rangle d\nu.
\end{align*}
Thanks to \cite[Lemma 2.1(ii)]{DaPLun18} it follows that
\begin{align*}
\int_Xg\langle R\nabla G, z\rangle d\nu
= \int_Xg\langle \Psi,z\rangle d\nu,
\end{align*}
holds true for any $g\in C_b(X)$. If we define the measures $\nu_1:=\langle R\nabla G, z\rangle \nu$ and $\nu_2:=\langle \Psi,z\rangle \nu$, above arguments imply that
\begin{align*}
\int_Xgd\nu_1= \int_Xgd\nu_2, \quad g\in C_b(X),
\end{align*}
which means that $\nu_1=\nu_2$. Therefore, 
\begin{align*}
\int_Xfd\nu_1= \int_Xf d\nu_2,
\end{align*}
and we deduce that, up to a subsequence,
\begin{align*}
\int_Xf(\langle RDG_{n,k},z\rangle-G_{n,k}{\it v}_z)d\nu\rightarrow
\int_Xf(\langle \Psi,z\rangle-G{\it v}_z)d\nu
\int_Xf(\langle R\nabla G,z\rangle-G{\it v}_z)d\nu.
\end{align*}
Repeating this arguments for any $z\in X$ we get the thesis.
\end{proof}

We want to prove the equivalence of $V_\nu(f,O)<+\infty$ and $f\in BV(O,\nu)$. As usual, by standard arguments (see e.g. Remark \ref{implicazione_facile}) we infer that $f\in BV(O,\nu)$ gives $V_\nu(f,O)<+\infty$ and $V_\nu(f,O)\leq |D_\nu^Of|(O)$.
To show that if $V(f,O)<+\infty$ then $f\in BV(O,\nu)$ and $|D_\nu^Of|(O)\leq V_\nu(f,O)$ we first prove that, if $V_\nu^z(f,O)<+\infty$, then there exists a Borel measure $\mu_z$ which satisfies \eqref{scrivania}.

\begin{pro}
\label{prop_es_mis_z}
Let $z\in X$ be such that $R^*z\neq0$ and let $f\in L^1(O,\nu)$ be such that $f{\it v}_z\in L^1(O,\nu)$ and $V_\nu^z(f,O)<+\infty$. Then, there exists a Borel measure $\mu_z$ such that \eqref{scrivania} is satisfied.
\end{pro}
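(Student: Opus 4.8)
The plan is to mirror the proof of Theorem~\ref{cassettiera}, reducing the problem on $O$ to the whole space $X$ by a Lipschitz cut-off and then invoking the directional $BV$ theory of \cite{DaPLun18}; the direct Riesz route is unavailable here because $O$ is not locally compact. Fix an open set $A\Subset O$ and choose $g\in{\rm Lip}_c(O)$ with $\|g\|_\infty\le1$ and $g\equiv1$ on $A$; by Hypothesis~\ref{cavovga} such a $g$ is G\^ateaux differentiable $\nu$-a.e., so $\langle RDg,z\rangle$ is a bounded function. First I would show that the null extension $g\bar f$ has finite variation along $z$ on the whole space. For $\varphi\in C^1_b(X)$ with $\|\varphi\|_\infty\le1$, the Leibniz rule $\langle RD(\varphi g),z\rangle=g\langle RD\varphi,z\rangle+\varphi\langle RDg,z\rangle$ gives
\begin{align*}
\int_X g\bar f\,(\langle RD\varphi,z\rangle-\varphi{\it v}_z)\,d\nu
=\int_O f\,(\langle R\nabla(\varphi g),z\rangle-(\varphi g){\it v}_z)\,d\nu
-\int_O f\,\varphi\,\langle RDg,z\rangle\,d\nu,
\end{align*}
and since $\varphi g\in{\rm Lip}_c(O)$ with $\|\varphi g\|_\infty\le1$, the first term is bounded by $V_\nu^z(f,O)$ while the second is bounded by $\|\langle RDg,z\rangle\|_\infty\|f\|_{L^1(O,\nu)}$. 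Hence $g\bar f$ has finite variation along $z$; moreover $g\bar f\in L^1(X,\nu)$ and, since $f{\it v}_z\in L^1(O,\nu)$ and $g$ is bounded, $g\bar f{\it v}_z\in L^1(X,\nu)$.

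Next I would apply the whole-space single-direction result (the building block underlying the characterization in \cite{DaPLun18}, whose nondegeneracy is ensured precisely by the standing assumption $R^*z\neq0$): since $V_\nu^z(g\bar f)<+\infty$, there is a finite signed Borel measure $m_z^{(g)}$ on $X$ with
\[
\int_X g\bar f\,(\langle RD\varphi,z\rangle-\varphi{\it v}_z)\,d\nu=-\int_X\varphi\,dm_z^{(g)},\qquad \varphi\in C^1_b(X).
\]
Arguing as in the second part of Lemma~\ref{tavolo} and of the preceding Lemma, I would extend this identity from $\varphi\in C^1_b(X)$ to $\varphi\in{\rm Lip}_c(X)$ by approximating $\varphi$ with cylindrical mollifications $\varphi_n\in C^1_b(X)$ that are uniformly bounded together with their derivatives and converge, together with $D\varphi_n$, $\nu$-a.e., and then passing to the limit by dominated convergence (using $g\bar f,\ g\bar f{\it v}_z\in L^1(X,\nu)$ on the left-hand side and the finiteness of $m_z^{(g)}$ on the right).

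I would then glue the localized measures exactly as in Theorem~\ref{cassettiera}. For $\psi\in{\rm Lip}_c(A)$ the integrand $g\bar f(\langle R\nabla\psi,z\rangle-\psi{\it v}_z)$ depends only on the values of $g$ on ${\rm supp}\,\psi\subseteq A$, where $g\equiv1$; hence $\int_X\psi\,dm_z^{(g)}=-\int_O f(\langle R\nabla\psi,z\rangle-\psi{\it v}_z)\,d\nu$ is independent of the admissible cut-off, and by the scalar case of Lemma~\ref{banana} the restriction $\mu_z^A:=m_z^{(g)}\mres A$ is a well-defined measure on $A$ with $|\mu_z^A|(A)\le V_\nu^z(f,O)$. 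Taking an increasing exhaustion $A_n\Subset O$ with $\bigcup_nA_n=O$ such that every $B\Subset O$ eventually lies in some $A_n$, the compatibility $\mu_z^{A_n}\mres A_m=\mu_z^{A_m}$ for $m<n$ together with the uniform total-variation bound shows, as in Theorem~\ref{cassettiera}, that $(\mu_z^{A_n})$ is Cauchy in the Banach space $\mathscr M(O)$ and converges to a measure $\mu_z$ with $\mu_z\mres A_n=\mu_z^{A_n}$. Finally, any $\varphi\in{\rm Lip}_c(O)$ satisfies ${\rm supp}\,\varphi\subseteq A_n$ for $n$ large, so choosing $g\equiv1$ on $A_n$ yields $\int_O f(\langle R\nabla\varphi,z\rangle-\varphi{\it v}_z)\,d\nu=-\int_{A_n}\varphi\,d\mu_z^{A_n}=-\int_O\varphi\,d\mu_z$, which is \eqref{scrivania}.

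The main obstacle is the second step. Because the existence of the representing measure cannot be obtained by a Riesz argument on $O$, it must be imported from the whole-space theory of \cite{DaPLun18}, and the test-function class has to be handled with care: as $f$ is only assumed to be in $L^1$, the reflexivity and weak-compactness argument used in the preceding Lemma (which requires $p>1$) is not available, so the passage from $C^1_b(X)$ to ${\rm Lip}_c(X)$ must rely on an explicit mollification of the test function with $\nu$-a.e. convergence of the G\^ateaux derivatives. Verifying this convergence, together with the fact that $R^*z\neq0$ is exactly the nondegeneracy condition needed to apply the directional result of \cite{DaPLun18}, are the delicate points; the remaining gluing and exhaustion steps are routine adaptations of Theorem~\ref{cassettiera}.
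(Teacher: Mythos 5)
Your proposal follows essentially the same route as the paper's proof: multiply the null extension by a Lipschitz cut-off $g$, use the Leibniz rule to bound $V_\nu^z(g\bar f)$ by $V_\nu^z(f,O)+\|\langle RDg,z\rangle\|_\infty\|f\|_{L^1(O,\nu)}$, invoke the whole-space directional result of \cite{DaPLun18} to obtain a representing measure, and then glue the restrictions to an exhaustion $A_n\Subset O$ exactly as in Theorem~\ref{cassettiera}. The only difference is one of explicitness: you spell out the passage of the whole-space identity from $C^1_b(X)$ to Lipschitz test functions and the cut-off-independence of the localized measures, steps the paper compresses into the citation of \cite[Theorem 3.3]{DaPLun18} and the phrase ``arguing as in Theorem~\ref{cassettiera}''.
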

\begin{proof}
Let $f\in L^1(O,\nu)$ be such that $f{\it v}_z\in L^1(O,\nu)$. Further, let $g\in {\rm Lip}_c(O)$ such that $\|g\|_\infty\leq 1$. Clearly, $\overline fg\in L^1(X,\nu)$ and $(\overline fg){\it v}_z\in L^1(X,\nu)$. Further, for any $G\in C^1_b(X)$ with $\|G\|_\infty\leq1$, we have $gG\in{\rm Lip}_c(O)$ and $\nabla (gG)=G \nabla g+gDG$. Hence,
\begin{align*}
\int_X(\overline f g)(\langle RD G,z\rangle-G{\it v}_z)d\nu
= & \int_X\overline f(\langle R\nabla  (Gg),z\rangle-(Gg){\it v}_z)d\nu-\int_X\overline f G\langle R\nabla g,z\rangle d\nu \\
\leq & V_\nu^z(f,O)+\|R\|_{\mathcal L(X)}\|\nabla g\|_\infty\|f\|_{L^1(X,\nu)}.
\end{align*}
Hence, $V_\nu^z(\overline fg)<+\infty$ and from \cite[Theorem 3.3]{DaPLun18} there exists a measure ${\it m}_z$ which satisfies \eqref{scrivania} and $|{\it m}_z|(X)=V_z(\overline f g)$. Arguing as in Theorem \ref{cassettiera}, it is easy to build a measure $\mu_z$ as limit for $n\rightarrow+\infty$ of ${\it m}_z\mres A_n$, where $(A_n)$ is a suitable increasing sequence of open subsets of $O$, such that \eqref{scrivania} is fulfilled.
\end{proof}

We are ready to state the main theorem of this section, which is a characterization of the space $BV(O,\nu)$ in terms of the variation $V_\nu$.
\begin{thm}
\label{giuggiole}
Let $f\in L^p(O,\nu)$ for some $p>1$. Then, $f\in BV(O,\nu)$ if and only if $V_\nu(f,O)<+\infty$. In this case, $|D_\nu^Of|(O)=V_\nu(f,O)$.
\end{thm}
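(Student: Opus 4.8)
The plan is to prove the two inequalities $V_\nu(f,O)\le |D_\nu^Of|(O)$ and $|D_\nu^Of|(O)\le V_\nu(f,O)$ separately, following the scheme of Theorem~\ref{cassettiera}. The first is the routine half and, as already noted before the statement, needs only the definition: if $f\in BV(O,\nu)$ and $D_\nu^Of=\sigma|D_\nu^Of|$ is its polar decomposition, then for any finite dimensional $F\subseteq X$ and any $\varphi\in{\rm Lip}_c(O,F)$ with $\|\varphi\|_\infty\le 1$, expanding $M^{*,F}\varphi$ as in \eqref{div_fin_dim_hilbert} and summing the defining identity \eqref{scrivania} over an orthonormal basis of $F$ gives $\int_Of\,M^{*,F}\varphi\,d\nu=-\int_O\langle\varphi,\sigma\rangle\,d|D_\nu^Of|\le|D_\nu^Of|(O)$. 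Taking the supremum over $F$ and $\varphi$ yields $V_\nu(f,O)\le|D_\nu^Of|(O)$, exactly as in Remark~\ref{implicazione_facile}.

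For the converse, assume $V_\nu(f,O)<+\infty$. I would first record that the hypothesis $p>1$ is precisely what keeps the tools of \cite{DaPLun18} in force: since $v_z\in\bigcap_{q<\infty}L^q(X,\nu)$ we have $v_z\in L^{p'}(X,\nu)$, so $fv_z\in L^1(O,\nu)$ for every $z$ by Hölder, and the duality $L^{p'}=(L^p)^*$ used to close $RD$ is available. Choosing $F={\rm span}\{z\}$ in the definition of $V_\nu(f,O)$ and using the linearity of $z\mapsto v_z$ granted by Hypothesis~\ref{frutteto} gives $V_\nu^z(f,O)\le\|z\|\,V_\nu(f,O)<+\infty$ for each $z\in X$. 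Hence, for every $z$ with $R^*z\neq0$, Proposition~\ref{prop_es_mis_z} produces a finite Borel measure $\mu_z$ on $O$ satisfying \eqref{scrivania}; for $z$ with $R^*z=0$ one has $v_z=0$ and the relation holds trivially with $\mu_z=0$.

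The genuinely new step is to assemble the scalar measures $\{\mu_z\}$ into a single $X$-valued measure. Fix an orthonormal basis $\{e_j\}_{j\in\N}$ of $X$, put $\mu_j:=\mu_{e_j}$, and for $F={\rm span}\{e_1,\ldots,e_m\}$ consider the $F$-valued measure $(\mu_1,\ldots,\mu_m)$. Testing against $G=\sum_jG_je_j\in{\rm Lip}_c(O,F)$ with $|G|_F\le1$ and summing \eqref{scrivania} over $j$ gives $\int_Of\,M^{*,F}G\,d\nu=-\int_O\langle G,\sigma\rangle\,d|(\mu_1,\ldots,\mu_m)|$; by Lemma~\ref{banana} applied with $Y=F$, the supremum of the left-hand side over such $G$ equals $|(\mu_1,\ldots,\mu_m)|(O)$, whence $\sigma_*:=\sup_m|(\mu_1,\ldots,\mu_m)|(O)\le V_\nu(f,O)<+\infty$. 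Exactly as in Remark~\ref{pendolare} one then builds $D_\nu^Of:=\sum_jf_je_j\,\sigma_*$ with $\sum_jf_j^2\le1$, so that $D_\nu^Of\in\mathscr M(O,X)$ and \eqref{scrivania} holds for every $z$; this proves $f\in BV(O,\nu)$ together with $|D_\nu^Of|(O)\le V_\nu(f,O)$, and combining with the first paragraph gives the claimed equality.

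I expect the real work to lie in the second and third paragraphs rather than in any geometric construction. One must keep careful track of the $L^p$--$L^{p'}$ integrability of every product ($fv_z$, and the cut-off product $(g\bar f)v_z$ appearing inside Proposition~\ref{prop_es_mis_z}), and one must verify that the directional measures $\mu_{e_j}$ are mutually coherent enough to be the densities of a single vector measure; this is where Hypotheses~\ref{frutteto} and~\ref{cavovga} (linearity of $z\mapsto v_z$ and Gâteaux differentiability of the Lipschitz cut-offs) are essential. By contrast, the total-variation identities transfer without change, because Lemma~\ref{banana} is stated for an arbitrary Hilbert target $Y$ and applies with $Y=F\subseteq X$, and $\mathscr M(O,X)$ is again a Banach space under the total variation norm, so the exhaustion and limiting arguments of Theorem~\ref{cassettiera} carry over verbatim.
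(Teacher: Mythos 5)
Your proposal is correct and follows essentially the same route as the paper: both directions are handled identically, with the converse obtained by deriving $V_\nu^z(f,O)<+\infty$ for each $z$, invoking Proposition~\ref{prop_es_mis_z} to get the directional measures $\mu_{e_j}$, bounding the total variation of the finite-dimensional truncations $(\mu_1,\ldots,\mu_m)$ by $V_\nu(f,O)$ via Lemma~\ref{banana} and the integration by parts, and then passing to the limit to assemble the $X$-valued measure. Your extra bookkeeping on where $p>1$ enters and on the case $R^*z=0$ is a welcome clarification of details the paper leaves implicit, but it does not change the argument.
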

\begin{proof}
The proof follows the arguments in \cite[Theorem 3.5]{DaPLun18}, hence we only give a brief sketch.

The fact that $f\in BV(O,\nu)$ implies $V_\nu(f,O)<+\infty$ is standard, hence we limit ourselves to prove the converse implication.

Let $f\in L^1(O,\nu)$ be such that $V_\nu(f,O)<+\infty$. Therefore, for any $z\in X$ we have $V_\nu^z(f,O)<+\infty$. Let us fix an orthonormal basis $\{e_n:n\in\N\}$ of $X$ and for any $n\in\N$ let us set $\mu_n:=\mu_{e_n}$, where $\mu_{e_n}$ is the measure constructed in Proposition \ref{prop_es_mis_z}. We claim that
\begin{align}
\label{vector_measure}
m(B):=\sum_{n=1}^{+\infty}\mu_n(B)e_n, \quad B\in \mathcal B(O),
\end{align}
is a well defined vector measure belonging to $\mathscr M(O,X)$ which satisfies \eqref{vernacoliere} and $|m|(X)\leq V_\nu(f,O)$. To this aim, for any $k\in\N$ we define the measure
\begin{align*}
M_k(B):=\sum_{n=1}^k\mu_n(B)e_n, \quad B\in \mathcal B(O).
\end{align*}
From \eqref{corsa} we have
\begin{align*}
|M_k|(O)
= & \sup\left\{\int_O\langle\varphi,dM_k\rangle: \varphi\in{\rm Lip}_c(O,X), \ |\varphi(x)|_X\leq 1\  \forall x\in O\right\} \\
= & \sup\left\{\int_O\langle\varphi,dM_k\rangle: \varphi\in{\rm Lip}_c(O,P_k(X)), \ |\varphi(x)|_X\leq 1 \ \forall x\in O\right\},
\end{align*}
where $P_k$ is the projection on the subspace generated by $\{e_1,\ldots,e_k\}$. Then, Proposition \ref{vernacoliere} gives
\begin{align*}
\int_O\langle\varphi,dM_k\rangle
= & \sum_{n=1}^k\int_O\varphi d\mu_n
= \sum_{n=1}^k\int_Of(\langle R\nabla \varphi,e_n\rangle-\varphi v_{e_n})d\nu
= \int_OfM^{*,P_k(X)}\varphi d\nu \leq V_\nu(f,O)\|\varphi\|_\infty,
\end{align*}
for any $\varphi\in {\rm Lip}_c(O,P_k(X))$, where $M^{*,P_k(X)}$ has been defined in \eqref{div_fin_dim_hilbert}. This means that $|M_k|(O)\leq V_\nu(f,O)$ for any $k\in\N$. Classical results in measure theory imply that the series $\sum_{n=1}^{+\infty}\mu_n(B)e_n$ converges for any $B\in\mathcal B(O)$, that $m\in\mathscr M(O,X)$ and that $|m|(O)\leq V_\nu(f,O)$.

Finally, the validity of \eqref{scrivania}  for $\varphi\in {\rm Lip}_c(O)$ follows arguing as in \cite{DaPLun18}.
\end{proof}

\subsection{Examples}
Here we provide some examples of measures which satisfy both Hypotheses \ref{frutteto} and \ref{cavovga}. 

\paragraph{{\it Weighted Gaussian measure.}}
We consider a weighted Gaussian measure
\begin{align*}
\nu(dx)=\frac{e^{-U(x)}}{\int_Xe^{-U}d\gamma}\gamma(dx),
\end{align*}
where $\gamma$ is a Gaussian measure on $X$ and $U$ satisfies the following assumptions.
\begin{hyp}
$U:X\rightarrow \R\cup\{+\infty\}$ is a proper lower semicontinuous convex function such that $U\in \mathbb D^{1,q}(X,\gamma)$ for any $q>1$. 
\label{cacomela}\end{hyp}
Here we take $R=Q^{1/2}$. Under Hypothesis \ref{cacomela} it easy to see that Hypothesis \ref{frutteto} is satisfied, for any $z\in X$ we have ${\it v}_z=\hat h+\langle Q^{1/2}DU,z\rangle$, where $h=Q^{1/2}z$ and ${\it v}_z\in L^q(X,\nu)$ for any $z\in X$ and any $q\geq1$ (for a deep study of Banach space endowed with weighted Gaussian measure see \cite{Fer15}). Further, Hypothesis \ref{cavovga} is fulfilled since $\nu$ is absolutely continuous with respect to the Gaussian measure $\gamma$ and \cite[Theorem 5.11.2$(ii)$]{Bog98} implies that any Lipschitz function is G\^ateaux differentiable $\gamma$-a.e. in $X$. Hence, Theorem \ref{giuggiole} reads as follows.
\begin{thm}
Let $O\subseteq X$ be an open subset of $X$ and let $p>1$. Then, $f\in L^p(X,\nu)$ belongs to $BV(O,\nu)$ if and only if $V_\nu(f,O)<+\infty$. In this case for any $\varphi\in {\rm Lip}_c(O)$ we have
\begin{align*}
\int_Of(\langle Q^{1/2}D\varphi,z\rangle -\varphi  \hat h-\varphi\langle Q^{1/2}DU,z\rangle)d\nu=
-\int_X\varphi dm_z,
\end{align*}
for any $z\in X$, where $h=Q^{1/2} z$.
\end{thm}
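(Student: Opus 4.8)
The plan is to present this statement as the specialization of Theorem~\ref{giuggiole} to the data $R=Q^{1/2}$ and $\nu=e^{-U}\gamma/Z$, where $Z:=\int_X e^{-U}\,d\gamma$. Consequently the genuine work is to verify that the pair $(\nu,R)$ meets the two standing assumptions, Hypotheses~\ref{frutteto} and~\ref{cavovga}, and to compute the function ${\it v}_z$ explicitly. Once this is done, the equivalence $f\in BV(O,\nu)\Leftrightarrow V_\nu(f,O)<+\infty$ together with the identity $|D_\nu^Of|(O)=V_\nu(f,O)$ is exactly the conclusion of Theorem~\ref{giuggiole}, and the displayed integration by parts formula is nothing but the defining identity \eqref{scrivania} with $R=Q^{1/2}$ and the explicit ${\it v}_z$ inserted.

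First I would verify Hypothesis~\ref{frutteto}. Fix $z\in X$ and set $h:=Q^{1/2}z\in H$; since $Q^{1/2}$ is selfadjoint, $\langle Q^{1/2}D\varphi,z\rangle=\langle D\varphi,h\rangle=\partial_h\varphi$ for every $\varphi\in C^1_b(X)$. Being proper, lower semicontinuous and convex, $U$ is bounded below by a continuous affine functional $a+\langle\ell,\cdot\rangle$, so that $e^{-sU}$ is dominated by a multiple of the $\gamma$-integrable exponential $e^{-s\langle\ell,\cdot\rangle}$, whence $e^{-U}\in L^s(X,\gamma)$ for every $s\ge1$. Since $U\in\mathbb D^{1,q}(X,\gamma)$ for all $q$, Hölder's inequality gives $e^{-U}\nabla_H U\in L^q(X,\gamma;H)$ for every $q$; truncating $t\mapsto e^{-t}$ to a Lipschitz function, applying the chain rule and passing to the limit via the closedness of $\nabla_H$ then yields $e^{-U}\in\mathbb D^{1,q}(X,\gamma)$ for all $q$, with $\nabla_H e^{-U}=-e^{-U}\nabla_H U$. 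Now the Gaussian integration by parts formula \eqref{federazione}, in the extended form of Remark~\ref{salama}, applies to the pair $\varphi\in C^1_b(X)$ and $e^{-U}/Z\in\mathbb D^{1,q}(X,\gamma)$ and gives
\[
\int_X\partial_h\varphi\,d\nu=\frac1Z\int_X\partial_h\varphi\,e^{-U}\,d\gamma=-\frac1Z\int_X\varphi\,\partial_h^*\big(e^{-U}\big)\,d\gamma=\int_X\varphi\,\big(\hat h+\partial_h U\big)\,d\nu,
\]
where I used $\partial_h^*(e^{-U})=\partial_h e^{-U}-e^{-U}\hat h=-e^{-U}(\hat h+\partial_h U)$. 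This identifies ${\it v}_z=\hat h+\partial_h U=\hat h+\langle Q^{1/2}DU,z\rangle$, and the affine lower bound together with Hölder transfers $\hat h\in\bigcap_pL^p(X,\gamma)$ (from \eqref{palo}) and $\partial_h U=[\nabla_H U,h]_H\in\bigcap_qL^q(X,\gamma)$ to ${\it v}_z\in\bigcap_pL^p(X,\nu)$, as required.

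Hypothesis~\ref{cavovga} is then immediate: by \cite{Bog98} every Lipschitz $G:X\to\R$ is G\^ateaux differentiable off a $\gamma$-null set, which is $\nu$-null because $\nu$ is absolutely continuous with respect to $\gamma$. With both hypotheses verified, Theorem~\ref{giuggiole} applies to every $f\in L^p(O,\nu)$ with $p>1$ and delivers both the equivalence and the equality of variations; substituting $R=Q^{1/2}$ and ${\it v}_z=\hat h+\langle Q^{1/2}DU,z\rangle$ into \eqref{scrivania} produces the displayed formula. The main obstacle in the whole argument is the verification of Hypothesis~\ref{frutteto}, namely justifying the integration by parts against the non-smooth weight $e^{-U}$ and securing all moments of ${\it v}_z$; it is precisely the convexity of $U$ (through its affine lower bound) that simultaneously makes the chain rule $\nabla_H e^{-U}=-e^{-U}\nabla_H U$ legitimate and the weight integrable to every power. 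Everything downstream is a direct appeal to Theorem~\ref{giuggiole}.
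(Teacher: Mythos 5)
Your proposal is correct and follows exactly the route the paper takes: verify Hypotheses \ref{frutteto} and \ref{cavovga} for $R=Q^{1/2}$ and $\nu=e^{-U}\gamma/Z$, identify ${\it v}_z=\hat h+\langle Q^{1/2}DU,z\rangle$, and invoke Theorem \ref{giuggiole}. In fact the paper only asserts that Hypothesis \ref{frutteto} is ``easy to see'' (deferring to the literature), so your explicit verification via the affine minorant of $U$, the chain rule for $e^{-U}$, and Gaussian integration by parts simply supplies the details the paper omits.
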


\paragraph{{\it A non Gaussian product measure}.}
From \cite[Subsection 5.2]{DaPLun18} and \cite{DaPLuTu18} we can consider a non Gaussian example of measure $\nu$ which satisfies our assumptions. For $m\geq 1$ and $\mu>0$ we consider the measure
\begin{align*}
\nu_\mu(d\xi):=a\mu^{-\frac1{2m}}e^{-\frac{|\xi|^{2m}}{2m}}d\xi, \quad \xi\in \R,
\end{align*}
where $a:=(2m)^{1-1/(2m)}/\Gamma(1/(2m))$ is a normalizing factor such that $\nu_\mu(\R)=1$. We consider a sequence of positive numbers $\mu_j$, $j\in\N$, such that
\begin{align*}
\sum_{j\in\N}\mu_j^{\frac1m}<+\infty,
\end{align*}
which gives that the product measure on $\R^{\N}$ defined by
\begin{align*}
\nu:=\prod_{j\in\N}\nu_{\mu_j},
\end{align*}
is well defined and concentrated on $\ell^2$ (space of sequences with Euclidean norm). We set $X=L^2(0,1)$, we fix an orthonormal basis $\{e_k:k\in\N\}$ of $X$ consisting of equibounded functions and
we consider the standard isomorphism from $X$ to $\R^{\N}$, $x\mapsto (x_k)$, where $x_k=\langle x,e_k\rangle$ for any $k\in\N$. The induced measure is still called $\nu$,
and in \cite{DaPLuTu18} it has been proved that Hypothesis \ref{frutteto} is satisfied with $R=Q^{1/2}$, where $Q$ is the covariance operator of $\nu$, i.e.,
\begin{align*}
Qe_j=b_1\mu_j^{\frac1m}e_j, \quad j\in\N, \quad b_1=(2m)^{\frac1m}\frac{\Gamma\left(\frac3{2m}\right)}{\Gamma\left(\frac1{2m}\right)}.
\end{align*}
Finally, we show that the measure $\nu$ enjoys the property in Hypothesis \ref{cavovga}. Let $G\in {\rm Lip}(X)$ and, for any $n\in\N$ and let us consider
the $n$-dimensional subspace $X_n$ of $X$ generated by $\{e_1,\ldots,e_n\}$ and its orthogonal complement $X_n^{\perp}$. For any $y\in X_n^{\perp}$,
on the finite dimensional affine spaces $y+X_n$ we can choose the product measure
\begin{align*}
\nu^n:=\prod_{i=1}^n\nu_{\mu_i},
\end{align*}
which is absolutely continuous with respect to the Lebesgue measure in $\R^n$. Therefore, thanks to the finite dimensional Rademacher Theorem we infer that 
$D_{e_k}G$ exists $\nu$-a.e. for any $k=1,\ldots,n$, where $D_{e_k}$ denotes the G\^ateaux derivative along $e_k$. We conclude by proceeding as in \cite[Theorem 5.11.2$(ii)$]{Bog98}.

\bibliographystyle{plain}

\end{document}